\documentclass[11pt,letterpaper]{article}
\usepackage[margin=1in]{geometry}
\usepackage[T1]{fontenc}
\usepackage{lmodern}
\usepackage{amsmath,amsthm}
\usepackage{microtype}
\usepackage{algorithm}
\usepackage{ifpdf}
\usepackage[numbers,sort&compress]{natbib}
\usepackage[hidelinks]{hyperref}
\usepackage[nameinlink,noabbrev]{cleveref}
\newtheorem{theorem}{Theorem}[section]
\newtheorem{lemma}[theorem]{Lemma}
\newtheorem{proposition}[theorem]{Proposition}
\newtheorem{corollary}[theorem]{Corollary}
\newtheorem{assumption}[theorem]{Assumption}
\theoremstyle{definition}
\newtheorem{definition}[theorem]{Definition}
\theoremstyle{remark}
\newtheorem{remark}[theorem]{Remark}
\newtheorem{example}[theorem]{Example}
\newenvironment{keywords}{\par\small\noindent\textbf{Keywords.} }{\par}
\newenvironment{MSCcodes}{\par\small\noindent\textbf{MSC2020.} }{\par}
\date{}
\usepackage{amsfonts,amssymb}
\usepackage{graphicx}

\usepackage{array}
\usepackage{placeins}
\usepackage{needspace}
\usepackage{booktabs}
\usepackage{tikz}
\usetikzlibrary{backgrounds,arrows.meta,calc}
\definecolor{cOne}{HTML}{0072B2}
\definecolor{cTwo}{HTML}{D55E00}
\definecolor{cW}{HTML}{009E73}
\tikzset{every picture/.style={line width=1.6pt,execute at begin picture={\setlength{\arrayrulewidth}{1.2pt}}},
  rootv/.style={rectangle,draw=black!75,inner sep=2pt,font=\small,minimum size=4.6mm},
  vtx/.style={circle,draw=black!75,inner sep=1pt,font=\small,minimum size=4.6mm},
  ed/.style={-{Latex[length=2.2mm,line width=1.2pt]},black!70,line width=1.6pt},
  thk/.style={-{Latex[length=2.4mm,line width=1.2pt]},black,line width=2.6pt},
  blob/.style={line width=1.6pt,draw=black!35,dash pattern=on 2.5pt off 2pt,rounded corners=6pt},
  lb/.style={font=\scriptsize,black!65},
  pl/.style={font=\small}}
\usepackage[noend]{algpseudocode}
\algrenewcommand\algorithmicrequire{\textbf{Input:}}
\algrenewcommand\algorithmicensure{\textbf{Output:}}

\numberwithin{equation}{section}

\crefname{assumption}{Assumption}{Assumptions}
\crefname{remark}{Remark}{Remarks}
\crefname{example}{Example}{Examples}

\newcommand{\Orders}{\mathcal{S}}
\newcommand{\Fset}{\mathcal{F}}
\newcommand{\R}{\mathbb{R}}
\newcommand{\pr}{\mathbb{P}}
\providecommand{\E}{}\renewcommand{\E}{\mathbb{E}}
\DeclareMathOperator{\TV}{TV}
\newcommand{\one}{\mathbf{1}}
\newcommand{\zero}{\mathbf{0}}
\DeclareMathOperator{\diag}{diag}
\DeclareMathOperator{\adj}{adj}

\title{The Order Mass Identity of Rooted Forests, with a Direct Proof of the Extended Aldous-Broder Theorem}

\author{Hugh Zheng\thanks{Department of Statistics, Texas A\&M University (\href{mailto:hugh.zheng@tamu.edu}{\texttt{hugh.zheng@tamu.edu}}).}}

\ifpdf
\hypersetup{
  pdftitle={The Order Mass Identity of Rooted Forests, with a Direct Proof of the Extended Aldous-Broder Theorem},
  pdfauthor={H. Zheng}
}
\fi

\begin{document}

\maketitle

\begin{abstract}
The Aldous-Broder algorithm constructs a spanning tree from the
first-entrance edges of a covering random walk. Its distribution is known,
but deriving it by summing over the orders in which a fixed tree can be
discovered remains difficult. In this paper, we evaluate this sum by
establishing an order-mass identity for rooted forests. For real matrices
with zero row sums, the identity expresses the sum of order weights
associated with each root as a principal cofactor of a matrix on the
roots, under a nonsingularity condition involving only the sets encountered
along the growth orders of the given forest. We prove the identity using
linear algebraic tools. With one root, the sum equals one, giving a direct
proof of the extended Aldous-Broder theorem for every finite irreducible
Markov chain started at a fixed state. With multiple roots, we derive the
forest law produced by the stopped covering walk and show how it differs
from the weight-proportional forest law. The identity and the resulting
tree and forest laws are formalized in Lean~4.
\end{abstract}

\begin{keywords}
random spanning tree, spanning forest, Aldous-Broder algorithm, Markov chain, harmonic extension, principal cofactor, formal verification
\end{keywords}

\begin{MSCcodes}
60J10, 05C81, 05C05, 15A15, 68V20
\end{MSCcodes}

\section{Introduction}\label{sec:intro}

Random spanning forests are used in Bayesian clustering and graph signal
processing. In Bayesian clustering, each component of a forest represents
a group of observations, and a distribution over forests induces a
distribution over partitions
\cite{LuoSangMallick2021,DuanRoy2024}. In graph signal processing,
random forests are used to estimate regularized and interpolated signals
by Monte Carlo \cite{PilavciAmblardBarthelmeTremblay2021}.
These applications motivate studying the distribution of the forest
produced by a sampling algorithm.

Random walks give a simple way to generate trees: start at one vertex, run the walk until every vertex has been visited, and retain the edge that for the first time enters a new vertex. This is the famous Aldous-Broder construction \cite{Broder1989,Aldous1990}, and it turns out that the retained edges form a spanning tree whose law is proportional to the tree weight. Wilson's algorithm provides another classical tree sampler, using loop-erased random walks \cite{Wilson1996,ProppWilson1998}. Recent methods also use fast-forwarded walks \cite{TamDunsonDuan2025} to improve sampling efficiency.

Although the distribution of the Aldous-Broder tree is known to be proportional to the tree weight, a direct derivation using basic linear algebraic results and combinatorial arguments remains difficult. Hence, our first question is: for a fixed tree, can we calculate its probability by summing over the orders in which the walk can discover its vertices? We give a positive answer to this question. This calculation also leads us to consider a second question: if there is more than one root, do the forests constructed by retaining all of the first-entrance edges into the nonroot set follow the distribution that is proportional to the forest weights? The answer is no in general and we discuss the resulting forest law.

To formally describe the problem, let $P$ be the transition matrix of an irreducible Markov chain on $V=\{1,\dots,n\}$, $n\ge2$, with stationary distribution $\pi$. Start the chain at the root $r\in V$ and run it until every state has been visited. For each node $v\ne r$, let $f(v)$ be the state from which $v$ was entered for the first time, i.e., the parent of $v$. Then, the edges $\{(f(v),v):v\not=r,v\in V\}$ form a diverging spanning arborescence $F$ pointing away from $r$, called the \emph{first-entrance tree}, with the law that is proportional to the tree weight:
\begin{equation}\label{eq:ab}
\pr(F=F_0)\propto w(F_0)
\end{equation}
where $w(F):=\prod_{v\not= r}\pi_{f(v)}P_{f(v)v}$ denotes the tree weight of $F$ and $\mathcal F_r$ denotes all diverging spanning arborescences rooted at $r$. For simple random walk on a
connected undirected graph with equal edge weights, \eqref{eq:ab} is the uniform tree law. For a general irreducible chain started at a fixed state, \eqref{eq:ab} is the extended Aldous-Broder theorem established in \cite{HuLyonsTang2021} and \cite{FredesMarckert2023}.

The classical proofs \cite{Broder1989,Aldous1990} identify the distribution of the last-exit tree of a stationary walk and then use time reversal to obtain the first-entrance tree law. A weighted reversible version is given in \cite{LyonsPeres2016}. For general irreducible chains, \cite{HuLyonsTang2021} conditions on the number of traversals of each directed edge. Given these counts, the walk corresponds to a uniformly sampled Eulerian path in a directed multigraph. The BEST theorem gives the conditional tree probabilities, and the limiting transition frequencies yield the tree weights in \eqref{eq:ab}. \cite{FredesMarckert2023} gives both an adaptation of the stationary argument and a combinatorial proof that decomposes an encoding of the walk into golf sequences and heaps of cycles.

However, none of the existing proofs take the direct route by evaluating the summation of the joint probabilities of the tree and its compatible growth orders over the orders in which the vertices are discovered. Indeed, in \cite{FredesMarckert2023}, the difficulty of such evaluation is discussed. In this paper, we pursue this calculation using basic algebraic tools.

To make this difficulty precise, let $\tau(X)$ be the order in which the nodes other than $r$ are discovered. For an ordering $\tau=(\tau_1,\dots,\tau_{n-1})$, define $B_0=\{r\}$ and $B_\ell=B_{\ell-1}\cup\{\tau_\ell\}$ for $\ell=1,\ldots,n-1$. For a fixed tree $F_0=\{(f(v),v)\}$, if $f(\tau_\ell)\in B_{\ell-1}$ for every $\ell=1,\ldots,n-1$, then the discovery order $\tau$ is ``compatible" with $F_0$, and we have the factorization (Proposition~\ref{prop:orderlaw})
\begin{equation}\label{eq:obstruction}
\pr\big(F=F_0,\ \tau(X)=\tau\big)\propto w(F_0)\cdot\omega_f(\tau),
\end{equation}
where $\omega_f(\tau)$ is the order weight that depends both on $f$ and $\tau$. The proportionality constant is independent of both $F_0$ and $\tau$. Summing \eqref{eq:obstruction} over the compatible orders gives the probability of $F_0$. Since the tree weight $w(F_0)$ does not depend on the order, it remains to sum the order weights. In particular, the tree law follows once we establish, for each fixed tree,
\[
\sum_{\tau:\tau\text{ compatible with }F_0}\omega_f(\tau)=1.
\]
For different trees, both the number of compatible orders and their weights $\omega_f(\tau)$ can differ substantially, so directly evaluating the summation is not trivial.

To evaluate this sum, we group the compatible orders according to their
first discovered vertex. Once this vertex is treated as an additional
root, the remaining vertices form a smaller problem of the same kind.
This observation motivates us to study rooted forests, even though our
original question concerns a single tree. We establish an identity
that expresses the sum of order weights associated with each root as a
principal cofactor of a matrix indexed by the roots. The proof proceeds
by induction on the number of nonroot vertices, using a cofactor
relation to connect successive steps.

We call this result the \emph{order-mass identity}. For a single root,
the identity gives the required sum of one and hence a direct proof of
the Aldous-Broder theorem. More generally, the sum depends on which
vertices belong to each root's tree, but not on the edges within those
trees. Thus, forests with the same root components have the same order
masses, even though their compatible orders and individual order weights
may differ.

With several roots, the identity also answers our second question.
The stopped covering walk samples each forest with probability
proportional to its forest weight multiplied by a \emph{completion
weight}, which measures the total weight of the edges that can complete
it to a tree rooted at the seed. This extra factor explains why the
stopped walk does not generally sample the weight-proportional forest
law. It also determines whether importance weighting can recover the
target law, since forests with zero completion weight are never sampled.

The article makes four contributions.
\begin{enumerate}
\item We establish the order-mass identity in Theorem~\ref{thm:cofactor}
under a nonsingularity condition involving only the bases along the growth
orders of the given forest and prove it using linear algebraic tools.
\item For a single root, we use the identity to give a direct proof of the
extended Aldous-Broder theorem in Theorem~\ref{thm:ab} and derive the
conditional distribution of the discovery order in Theorem~\ref{thm:orderlaw}.
\item For multiple roots, we derive the forest law produced by the stopped
covering walk in Theorem~\ref{thm:stopped}, making explicit how it differs
from the weight-proportional forest law.
\item We formalize the order-mass identity, the Aldous-Broder theorem,
and the stopped-walk law in Lean~4.
\end{enumerate}

The remainder of the article follows the calculation.
Section~\ref{sec:prelim} introduces harmonic extensions and growth orders.
Section~\ref{sec:walk} gives the joint probability decomposition,
Sections~\ref{sec:identity} and~\ref{sec:proof} establish the order-mass
identity, and Section~\ref{sec:consequences} derives its sampling consequences.
Section~\ref{sec:numerics-main} gives a numerical illustration.
Appendix~\ref{sec:app-proofs} contains the auxiliary proofs.
The supplementary materials contain worked examples, further algebraic
and combinatorial results, formalization details, and the full numerical study.

\section{Preliminaries}\label{sec:prelim}

To express the discovery-order probabilities, we need harmonic
extensions on the sets visited by the walk and orders compatible with
a fixed forest. We introduce these objects in an algebraic and
combinatorial setting, together with the cofactor identities used to
evaluate the resulting sum.

\subsection{Notations and linear algebraic results}\label{sec:notation}
Throughout, $V$ is a set of $n\ge2$ vertices, and
$U^c:=V\setminus U$ for $U\subseteq V$. We use $u,v,w$ for vertices
and $r$ for a root.
For a matrix $A$, we write $A_{\cdot v}$ for its column $v$, $A_{u\cdot}$ for its row $u$,
$A[U]$ for the principal submatrix on the rows and columns in $U$,
$A[R,C]$ for the submatrix on the rows $R$ and the columns $C$, and
$\adj(A)$ for the adjugate of a square matrix, the transpose of its
cofactor matrix. Matrices and vectors are indexed by sets of vertices.
For example, if $U=\{1,2\}$ and $V=\{1,2,3\}$, a matrix
$A\in\R^{U\times U^c}$ has one column, indexed by $3$, so
$A_{\cdot3}$ denotes that column.

The \emph{principal cofactor} of $A\in\R^{U\times U}$
at $u\in U$ is $\det(A[U\setminus\{u\}])$. We use $e_v$ for the
standard basis vectors, $\one$ for the all-ones vector and $\zero$ for
the zero vector. We write $x|_U$ for the restriction of a vector $x$
to $U$, and we read a vector given on a subset of $V$ as a vector on
$V$ with the missing entries $0$ when the context requires it. For a statement $E$, we use the indicator function
\[
1(E):=\begin{cases}1,&\text{if $E$ holds},\\0,&\text{otherwise}.\end{cases}
\]

We use three classical facts about determinants
\cite{HornJohnson2013}: for $A\in\R^{U\times U}$ and
$a,c\in\R^U$,
\begin{equation}\label{eq:adj}
\begin{gathered}
A\adj(A)=\adj(A)A=\det(A)I,\qquad \det(I+ac^{\top})=1+c^{\top}a,\\
\det(A\text{ with column $v$ replaced by }a)=\textstyle\sum_u a_u\adj(A)_{vu}.
\end{gathered}
\end{equation}

The following lemma expresses the adjugate of a matrix with zero row
sums through its principal cofactors and uses them to evaluate a
rank-one perturbation. These relations will be used to compare
cofactors when the root set is enlarged.

\begin{lemma}[Cofactors of a matrix with zero row sums]\label{lem:adj}
Let $S$ be a nonempty finite set, let $A\in\R^{S\times S}$ satisfy
$A\one=\zero$, and let $d\in\R^S$ be its vector of principal
cofactors, $d_u:=\det(A[S\setminus\{u\}])$. Then
\begin{itemize}
\item[(a)] $\adj(A)=\one d^{\top}$ and $d^{\top}A=\zero^{\top}$;
\item[(b)] $\det(A+ac^{\top})=(c^{\top}\one)(a^{\top}d)$ for all
$a,c\in\R^S$.
\end{itemize}
\end{lemma}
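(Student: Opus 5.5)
The argument splits according to whether $A$ is invertible; in every case the starting point is $A\one=\zero$. This says $\det A=0$, since $\one\neq\zero$ lies in the kernel. By the first identity in \eqref{eq:adj}, $A\adj(A)=\adj(A)A=\zero$. Then $A\adj(A)=\zero$ says every column of $\adj(A)$ lies in $\ker A$, and $\adj(A)A=\zero$ says every row of $\adj(A)$ lies in the left kernel of $A$.

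\textbf{Rank $n-1$.} If $\operatorname{rank}A=|S|-1$, the kernel is spanned by $\one$. Each column of $\adj(A)$ is therefore a multiple of $\one$, so $\adj(A)=\one g^{\top}$ for some vector $g$. Comparing diagonal entries identifies $g$: $\adj(A)_{uu}=\det(A[S\setminus\{u\}])=d_u$, hence $g=d$ and $\adj(A)=\one d^{\top}$. Then $\zero=\adj(A)A=\one(d^{\top}A)$ forces $d^{\top}A=\zero^{\top}$.

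\textbf{Rank at most $n-2$.} All $(|S|-1)$-minors vanish, so $\adj(A)=0$ and $d=\zero$. Both claims in (a) then hold trivially.

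\textbf{The case $|S|=1$.} Here $A=0$, and the empty-determinant convention gives $d_u=1$ and $\adj(A)=1$. These are consistent with (a). I will state this convention explicitly.

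For (b), I would avoid the matrix determinant lemma, which needs $A$ invertible, and instead use column multilinearity. Write $A+ac^{\top}$ column by column as $A_{\cdot v}+c_v a$. Expanding the determinant multilinearly, every term that uses $a$ in two or more columns vanishes. The all-$A$ term is $\det A=0$. What remains is
\[
\det(A+ac^{\top})=\sum_v c_v\,\det(A\text{ with column $v$ replaced by }a).
\]
By the third fact in \eqref{eq:adj}, each summand equals $c_v\sum_u a_u\adj(A)_{vu}$. Substituting $\adj(A)_{vu}=d_u$ from (a) gives
\[
\sum_v c_v\sum_u a_u d_u=(c^{\top}\one)(a^{\top}d).
\]

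The main obstacle is the rank split in (a), specifically justifying that columns of $\adj(A)$ are multiples of $\one$ when the kernel is exactly one-dimensional. Everything else is bookkeeping. To sidestep the case split, one could use continuity: perturb $A$ to $A_\epsilon=A-\epsilon I+\epsilon\one e_1^{\top}$, or more simply argue by polynomial identity. That route seems more cumbersome, so I would use the rank split.
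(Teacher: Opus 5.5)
Your proposal is correct and follows essentially the same route as the paper: for (a), the rank split shows the columns of $\adj(A)$ are constant (or zero) and the diagonal entries identify them as $d$; for (b), the multilinear column expansion is combined with the column-replacement formula in \eqref{eq:adj} and part (a). Your separate treatment of $|S|=1$ is a harmless extra check, since that case already falls under the rank-$(|S|-1)$ case with the usual convention that the adjugate of a $1\times1$ matrix is $1$.
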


\needspace{6\baselineskip}
\subsection{Harmonic extensions}\label{sec:gammadef}

Suppose that values are given on the complement of $U$. A harmonic
extension fills in the values on $U$ so that the corresponding rows
of $L$ annihilate the resulting vector.

\begin{definition}[Harmonic extension]\label{def:gamma}
Let $L\in\R^{V\times V}$, let $U\subsetneq V$ with $\det L[U]\ne0$, and let $m\in\R^{U^c}$. We
call the matrix $\tilde\Gamma^U\in\R^{V\times U^c}$ with the rows
\[
\tilde\Gamma^U[U,U^c]:=-L[U]^{-1}L[U,U^c],\qquad
\tilde\Gamma^U[U^c,U^c]:=I_{|U^c|}
\]
the \emph{harmonic extension map}, and $\gamma^U(m):=\tilde\Gamma^Um\in\R^V$
the \emph{harmonic extension} of $m$ into $U$. In particular,
$\tilde\Gamma^{\emptyset}=I$ and $\gamma^{\emptyset}(m)=m$.
\end{definition}

The following lemma shows that the prescribed values outside $U$
and the equations $(Lx)|_U=\zero$ uniquely determine the extension.
\begin{lemma}\label{lem:harmonic-extension}
Let $L\in\R^{V\times V}$, let $U\subsetneq V$ with $\det L[U]\ne0$, and
let $m\in\R^{U^c}$. For $x\in\R^V$,
\[
x|_{U^c}=m\quad\text{and}\quad(Lx)|_U=\zero
\qquad\Longleftrightarrow\qquad x=\gamma^U(m).
\]
\end{lemma}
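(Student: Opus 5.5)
The statement to prove is Lemma~\ref{lem:harmonic-extension}. The plan is to split $x\in\R^V$ into its blocks $x|_U$ and $x|_{U^c}$ and write the rows of $Lx$ indexed by $U$ in block form:
\[
(Lx)|_U=L[U]\,x|_U+L[U,U^c]\,x|_{U^c}.
\]
Since $U\subsetneq V$, the complement $U^c$ is nonempty, so $m$ is a genuine vector. The case $U=\emptyset$ is trivial: the condition $(Lx)|_U=\zero$ is vacuous, and $\gamma^{\emptyset}(m)=m$ by Definition~\ref{def:gamma}, so both sides say $x=m$.

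For the forward direction, assume $x|_{U^c}=m$ and $(Lx)|_U=\zero$. Substituting $x|_{U^c}=m$ into the block identity gives
\[
L[U]\,x|_U=-L[U,U^c]\,m.
\]
Since $\det L[U]\ne0$, the matrix $L[U]$ is invertible, so
\[
x|_U=-L[U]^{-1}L[U,U^c]\,m=\tilde\Gamma^U[U,U^c]\,m.
\]
Together with $x|_{U^c}=m=I\,m=\tilde\Gamma^U[U^c,U^c]\,m$, this shows that every block of $x$ agrees with the corresponding block of $\tilde\Gamma^Um$. Hence $x=\gamma^U(m)$.

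For the converse, set $x=\gamma^U(m)=\tilde\Gamma^Um$. Reading off the rows in $U^c$ gives $x|_{U^c}=I\,m=m$. Reading off the rows in $U$ gives $x|_U=-L[U]^{-1}L[U,U^c]\,m$, and substituting into the block identity yields
\[
(Lx)|_U=-L[U]L[U]^{-1}L[U,U^c]\,m+L[U,U^c]\,m=\zero.
\]

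There is no real obstacle here. The only care needed is the bookkeeping of the block decomposition under the paper's conventions for indexing vectors and matrices by vertex sets, and checking that the convention of padding vectors with zeros does not interfere. It does not, because both blocks of $x$ are specified explicitly.
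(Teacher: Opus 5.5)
Your proposal is correct and follows the same route as the paper: you decompose $(Lx)|_U=L[U]\,x|_U+L[U,U^c]\,x|_{U^c}$ and use the invertibility of $L[U]$ to solve for $x|_U$. You write out the two directions separately and treat $U=\emptyset$ on its own, but the argument is the same.
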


\begin{proof}
We have $(Lx)|_U=L[U]\,x|_U+L[U,U^c]\,x|_{U^c}$. Hence, given
$x|_{U^c}=m$, the condition $(Lx)|_U=\zero$ holds if and only if
$x|_U=-L[U]^{-1}L[U,U^c]m$, that is, if and only if $x=\tilde\Gamma^Um$.
This completes the proof.
\end{proof}

The map $m\mapsto\gamma^U(m)=\tilde\Gamma^Um$ is linear. Each column
has a simple interpretation: for $v\in U^c$, the vector
$\gamma^U(e_v)$ equals $1$ at $v$ and $0$ at the other vertices
outside $U$. Its restriction to $U$ solves the Dirichlet problem with
these boundary values. If
$L\one=\zero$, then $\one$ is annihilated by every row of $L$ and equals
$\one$ off $U$. Hence, by uniqueness,
\begin{equation}\label{eq:unitsum}
\gamma^{U}(\one)=\one,\qquad\text{i.e.}\qquad
\sum_{v\in U^c}\gamma^{U}(e_v)=\one .
\end{equation}

For ease of notation, we write
\begin{equation}\label{eq:Gamma}
\Gamma^U:=\tilde\Gamma^U[U,U^c]=-L[U]^{-1}L[U,U^c]\in\R^{U\times U^c}
\end{equation}
for the rows of $\tilde\Gamma^U$ indexed by $U$. If $L\one=\zero$, then
$\Gamma^U\one=\one$ by \eqref{eq:unitsum}.

\subsection{Forests, blocks, and growth orders}\label{sec:combdef}

We specify a rooted forest by recording the parent of each nonroot
vertex. An order is compatible with the forest when each parent is
available before its child is added. We call these compatible orders
growth orders. To record the component membership separately from
the edges within each tree, we use a root indicator whose row for
each nonroot identifies the root of its component.

\needspace{5\baselineskip}
\begin{definition}[Forests, growth orders, root indicator]\label{def:forest}
Let $B\subsetneq V$ be nonempty. A \emph{spanning forest rooted at
$B$} is a forest on vertex set $V$ whose components each contain
exactly one vertex of $B$, with edges oriented away from that vertex.
Isolated roots are allowed. Thus the vertices with no parent are
exactly those in $B$.
\begin{itemize}
\item[(a)] A map $f:B^c\to V$ is a \emph{$B$-parent map} if, for every
$v\in B^c$, some iterate $f^{(q)}(v)$ lies in $B$, where $f^{(0)}(v):=v$
and $f^{(q+1)}(v):=f(f^{(q)}(v))$ as long as $f^{(q)}(v)\in B^c$. The
vertex $f(v)$ is the \emph{parent} of $v$, the elements of
$f^{-1}(u)$ are the \emph{children} of $u$, and the \emph{block} of
$u\in V$ is $D_u:=\{v\in B^c:f^{(q)}(v)=u\text{ for some }q\ge0\}$.
\item[(b)] A \emph{growth order} of a $B$-parent map $f$ is an ordering
$\tau=(\tau_1,\dots,\tau_k)$ of $B^c$ with $f(\tau_\ell)\in B_{\ell-1}$ for
every $\ell$, where $k:=|B^c|$ and
\begin{equation}\label{eq:base}
B_\ell:=B\cup\{\tau_1,\dots,\tau_\ell\}\qquad(0\le \ell\le k)
\end{equation}
is the \emph{base} after $\ell$ steps. We write $\Orders_B(f)$ for the
set of growth orders of $f$.
\item[(c)] The \emph{root indicator} of a $B$-parent map $f$ is the matrix
$M_f\in\{0,1\}^{B^c\times B}$ with entries $(M_f)_{vr}=1$ if
$v\in D_r$ and $0$ otherwise.
\end{itemize}
\end{definition}

We write $\Fset_B$ for the set of spanning forests rooted at $B$,
and $\Fset_r:=\Fset_{\{r\}}$ for the spanning arborescences rooted
at $r$. The following lemma identifies forests with their parent maps.

\begin{lemma}\label{lem:forest-bijection}
Let $B\subsetneq V$ be nonempty. The assignment
$f\mapsto\{(f(v),v):v\in B^c\}$ is a bijection from the $B$-parent
maps to $\Fset_B$. For each $r\in B$, the component rooted at $r$
has vertex set $\{r\}\cup D_r$.
\end{lemma}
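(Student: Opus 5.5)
We check that the assignment $\Phi(f):=\{(f(v),v):v\in B^c\}$ lands in $\Fset_B$, that it is injective, and that it is surjective. The component statement then follows by examining parent chains.

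\emph{Well-definedness.} Let $f$ be a $B$-parent map and view $\Phi(f)$ as a digraph on $V$.

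Each $v\in B^c$ has exactly one incoming edge, namely $(f(v),v)$. Vertices of $B$ have none, since $f$ is defined only on $B^c$.

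There is no cycle. Following incoming edges backwards from $v$ traces $v,f(v),f^{(2)}(v),\dots$. By definition this sequence reaches $B$ after finitely many steps, and a vertex of $B$ has no incoming edge. A directed cycle would instead give an infinite backward chain staying inside $B^c$, so no cycle can exist.

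Hence $\Phi(f)$ has $|B^c|=n-|B|$ edges and no cycles, so it is a forest with exactly $|B|$ components. Every vertex lies in the component of the first $B$-vertex on its parent chain. Since the number of components equals $|B|$ and each component contains at least one vertex of $B$, each component contains exactly one.

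Edges are oriented away from that root. Along the unique tree path from the root $r$ to $v$, which is the reversed parent chain, each edge $(f(w),w)$ points from parent to child.

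\emph{Injectivity.} $f$ is recovered from $\Phi(f)$: $f(v)$ is the tail of the unique edge with head $v$.

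\emph{Surjectivity.} Take $F_0\in\Fset_B$. Because edges point away from the unique root of each component, every nonroot vertex has exactly one incoming edge, its predecessor on the path from its root. Roots have none, and the vertices without a parent are exactly those of $B$. Define $f(v)$ as the tail of the incoming edge at $v$. Iterating $f$ walks back along the root path, so it reaches $B$ in at most the path length. Thus $f$ is a $B$-parent map with $\Phi(f)=F_0$.

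\emph{Components.} For $r\in B$ and $v\in B^c$, $v$ lies in the component of $r$ exactly when its parent chain ends at $r$. Equivalently, $f^{(q)}(v)=r$ for some $q$, which is the condition $v\in D_r$. Adding $r$ itself gives the vertex set $\{r\}\cup D_r$.

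The only step needing care is the acyclicity and component count in the well-definedness part. This is handled by the finite-termination property in the definition of a $B$-parent map together with the fact that the forest has $n-|B|$ edges.
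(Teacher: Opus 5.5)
Your proof is correct, but it reaches the tree structure by a different route from the paper's. The paper never proves acyclicity directly. It groups vertices by the terminal root of their parent chains and notes that both endpoints of each edge $(f(v),v)$ have the same terminal root. It also notes that reversed parent chains connect each class to its root, so the components are exactly the classes $\{r\}\cup D_r$. Each class is connected and carries exactly $|D_r|$ edges, one into each nonroot, so it is a tree.

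You instead derive acyclicity from the finite-termination property. You then use the global count of $n-|B|$ edges to get $|B|$ components, place exactly one root in each by pigeonhole, and only afterwards identify the components with $\{r\}\cup D_r$. The paper's route is shorter, obtains the component description as part of the construction, and excludes every kind of cycle automatically through ``connected with one fewer edge than vertices''. Yours makes the role of the termination condition explicit.

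Your route has one tacit step. The termination argument only rules out directed cycles, whereas the count ``acyclic with $n-|B|$ edges gives $|B|$ components'' needs the absence of undirected cycles. This follows from the in-degree bound you established just before. Take an undirected cycle with $m$ edges on $m$ vertices. Every vertex has in-degree at most one, so the $m$ edges have $m$ distinct heads on the cycle. Each cycle vertex is therefore the head of exactly one cycle edge, which forces a consistent orientation, i.e.\ a directed cycle. With that sentence added, your proof is complete.
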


\begin{proof}[Proof of Lemma~\ref{lem:forest-bijection}]
For a $B$-parent map, every parent chain terminates at a unique root.
The endpoints of each edge have the same terminal root, and reversing
the parent chain connects each vertex to that root. Hence the
components are precisely $\{r\}\cup D_r$, $r\in B$. Each is connected
and has $|D_r|$ edges, one entering each vertex other than $r$.
It is therefore a tree, with edges oriented away from $r$.
Conversely, a forest in $\Fset_B$ gives each $v\in B^c$ a unique
parent, and following parents reaches the root of its component.
This defines a $B$-parent map. The two constructions are inverse.
\end{proof}

Figure~\ref{fig:example}(a) illustrates the parent map on
$V=\{1,\dots,5\}$ with roots $B=\{1,2\}$ and
$f(3)=1$, $f(4)=3$, $f(5)=2$. The root blocks are
$D_1=\{3,4\}$ and $D_2=\{5\}$, shown by the dashed regions.
The ordering $(3,5,4)$ is a growth order, whereas $(4,3,5)$ is not,
since the parent of $4$ has not yet been added.

Panel (b) represents growth orders as paths between bases, with each
edge adding the vertex on its label. The three paths give the orders
$(3,4,5)$, $(3,5,4)$ and $(5,3,4)$. Equivalently, these are the linear
extensions of the tree order of $f$.

\begin{figure}[H]
\centering
\resizebox{\textwidth}{!}{%
\begin{tikzpicture}[x=1cm,y=1cm,
  ent/.style={vtx,line width=1.6pt},
  bs/.style={rectangle,rounded corners=3pt,draw=black!70,font=\footnotesize,inner sep=2.5pt},
  rw/.style={-{Latex[length=2.2mm,line width=1.2pt]},black!60,dashed,line width=1.6pt},
  lb/.style={font=\footnotesize,black!80},
  tA/.style={draw=cOne,fill=cOne!12},
  tB/.style={draw=cTwo,fill=cTwo!12},
  tW/.style={draw=cW,fill=cW!15},
  eA/.style={ed,cOne},
  eB/.style={ed,cTwo},
  eW/.style={ed,cW}]
\node[rootv,tA] (a1) at (0,0) {$1$};
\node[rootv,tB] (a2) at (1.8,0) {$2$};
\node[ent,tA] (a3) at (0,-0.9) {$3$};
\node[vtx,tA] (a4) at (0,-1.8) {$4$};
\node[ent,tB] (a5) at (1.8,-0.9) {$5$};
\draw[eA] (a1)--(a3); \draw[eA] (a3)--(a4); \draw[eB] (a2)--(a5);
\begin{scope}[on background layer]
\draw[blob,draw=cOne!60] (-0.42,-0.5) rectangle (0.42,-2.22);
\draw[blob,draw=cTwo!60] (1.38,-0.5) rectangle (2.22,-1.32);
\end{scope}
\node[lb,text=cOne] at (0,-2.48) {$D_1$};
\node[lb,text=cTwo] at (1.8,-1.58) {$D_2$};
\node[font=\footnotesize] at (3.75,-0.9) {$M_f=\begin{array}{c|cc}
 & 1 & 2\\\hline 3 & 1 & 0\\ 4 & 1 & 0\\ 5 & 0 & 1\end{array}$};
\node[pl] at (2.0,-3.1) {(a) Forest and root indicator};
\begin{scope}[xshift=6.2cm,yshift=-0.9cm]
\node[bs] (c0) at (0,0) {$\{1,2\}$};
\node[bs] (c3) at (2.1,0.75) {$\{1,2,3\}$};
\node[bs] (c5) at (2.1,-0.75) {$\{1,2,5\}$};
\node[bs] (c34) at (4.5,0.75) {$\{1,2,3,4\}$};
\node[bs] (c35) at (4.5,-0.75) {$\{1,2,3,5\}$};
\node[bs] (cv) at (6.6,0) {$V$};
\draw[eA] (c0)--node[lb,above,text=cOne]{$3$}(c3);
\draw[eB] (c0)--node[lb,below,text=cTwo]{$5$}(c5);
\draw[eA] (c3)--node[lb,above,text=cOne]{$4$}(c34);
\draw[eB] (c3)--node[lb,pos=0.35,right=1pt,text=cTwo]{$5$}(c35);
\draw[eA] (c5)--node[lb,below,text=cOne]{$3$}(c35);
\draw[eB] (c34)--node[lb,above,text=cTwo]{$5$}(cv);
\draw[eA] (c35)--node[lb,below,text=cOne]{$4$}(cv);
\node[pl] at (3.3,-2.2) {(b) Growth orders as paths};
\end{scope}
\end{tikzpicture}}
\caption{Dashed regions mark the root blocks, which exclude the roots themselves.}
\label{fig:example}
\end{figure}

\needspace{5\baselineskip}
We next describe how the blocks decompose along the children of each
vertex.

\begin{lemma}[Blocks]\label{lem:partition}
Let $f$ be a $B$-parent map. The blocks $D_r$, $r\in B$, are pairwise
disjoint and have union $B^c$. Individual blocks may be empty.
For every $u\in V$, the blocks of the children of $u$ are pairwise
disjoint with union $D_u\setminus\{u\}$.
\end{lemma}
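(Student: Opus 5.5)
The plan is to work directly from the definition of blocks through parent chains. For each $v\in B^c$ the iteration $f^{(q)}(v)$ is defined only while $f^{(q)}(v)\in B^c$, and by Definition~\ref{def:forest}(a) it reaches $B$ after finitely many steps. First I will record that the chain $v=f^{(0)}(v),f^{(1)}(v),\dots,f^{(p)}(v)$, with $p$ the first index such that $f^{(p)}(v)\in B$, has pairwise distinct entries. A repetition would make the chain periodic and prevent it from ever reaching $B$. From this I get that $D_u$ is the set of $v\in B^c$ whose parent chain passes through $u$. Also, since $f^{(0)}(u)=u$, we have $u\in D_u$ when $u\in B^c$, while for $u\in B$ only positive iterates can hit $u$.

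For the root blocks, every $v\in B^c$ has a terminal root $f^{(p)}(v)\in B$, so $v\in D_{f^{(p)}(v)}$ and the union of the $D_r$ is $B^c$. For disjointness, the chain stops as soon as it enters $B$, so it contains exactly one element of $B$, namely the terminal one. Hence $v\in D_r\cap D_{r'}$ with $r,r'\in B$ forces $r=r'$. Emptiness is possible: if $r$ has no children, no chain passes through $r$, as with $D_2$ when $2$ has no children.

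For the children decomposition at $u\in V$, I first show that $D_u\setminus\{u\}=\bigcup_{c\in f^{-1}(u)}D_c$. Take $v\in D_u$ with $v\neq u$, and let $q\ge1$ be such that $f^{(q)}(v)=u$. Put $c=f^{(q-1)}(v)$. Then $c\in B^c$, $f(c)=u$, and $v\in D_c$. Conversely, take $v\in D_c$ with $f(c)=u$. Since $c\in B^c$, the chain continues one step past $c$ to $u$, so $v\in D_u$. To see $v\ne u$, note that if $u\in B$ this holds because $v\in B^c$. If $u\in B^c$ and $v=u$, then $u$ would appear twice in its own chain (at index $0$ and again after passing through $c$), contradicting distinctness.

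Disjointness is the step needing care. Suppose $v\in D_c\cap D_{c'}$ with $f(c)=f(c')=u$. Because the chain entries are distinct, $c$ and $c'$ each occur once in the chain of $v$, at indices $i$ and $i'$. Both are followed by $u$, and $u$ itself occurs only once in the chain, so $i+1=i'+1$, which gives $c=c'$. The same uniqueness-of-position argument also gives $u\notin D_c$, a point the union step relies on. So the main obstacle is establishing, before anything else, that parent chains are injective and end at the first root they reach. Once that is in place, every remaining claim reduces to locating a vertex's unique position in a chain.
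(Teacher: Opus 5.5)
Your proof is correct and follows the paper's argument. You first show that the parent chain of each $v\in B^c$, up to its first root, has distinct entries, and then read off every claim from the unique position of a vertex in that chain: the child is $f^{(q-1)}(v)$, and $u\notin D_c$ follows from distinctness. The one step the paper states explicitly that you skip is that a repetition cannot involve the terminal root, since $f^{(q)}(v)\in B$ with $q<p$ would contradict the minimality of $p$.
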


As a consequence of Lemma~\ref{lem:partition}, $M_f\one=\one$. Thus
the root indicator records, for each nonroot, the root of its tree,
and nothing else about the forest. We defer the proof of
Lemma~\ref{lem:partition} to Appendix~\ref{sec:app-proofs}.

\section{The covering walk and the sum over discovery orders}\label{sec:walk}

We first calculate the joint probability of a fixed forest and discovery
order by summing over the excursions between successive discoveries.
This calculation separates the forest weight from the factors that depend
on the discovery order and identifies the sum we evaluate in
Section~\ref{sec:identity}.

\subsection{The walk and its first-entrance forest}\label{sec:walkdef}

Let $P$ be an irreducible stochastic matrix on $V$, and let $\pi$ be
its stationary distribution, with $\pi_v>0$, $\one^{\top}\pi=1$ and
$\pi^{\top}P=\pi^{\top}$ \cite{LevinPeres2017}. We use
$(X_t)_{t\ge0}$ to denote the Markov chain with transition matrix $P$.

We stop the walk once it has visited all vertices outside a prescribed
root set and retain the edges by which it first entered those vertices,
as described in Algorithm~\ref{alg:stopped}.

\begin{algorithm}[H]
\caption{Stopped covering walk and its first-entrance forest.}\label{alg:stopped}
\begin{algorithmic}[1]
\Require an irreducible stochastic matrix $P$ on $V$; a nonempty
root set $B\subsetneq V$; a seed $r\in B$.
\Ensure a spanning forest $F_B\in\Fset_B$, each edge $(f(v),v)$
pointing away from the root of its tree.
\State $X\gets r$;\quad $U\gets B$;\quad $F_B\gets\emptyset$
\While{$U\ne V$}
  \State draw $Y$ from the distribution $P_{X\cdot}$
  \If{$Y\notin U$}
    \State $F_B\gets F_B\cup\{(X,Y)\}$;\quad $U\gets U\cup\{Y\}$
  \EndIf
  \State $X\gets Y$
\EndWhile
\State \Return $F_B$
\end{algorithmic}
\end{algorithm}

\needspace{8\baselineskip}
\begin{definition}[Stopped covering walk and discovery order]\label{def:stoppedwalk}
Let $B\subsetneq V$ be nonempty, let $r\in B$ be the \emph{seed}, and
let $(X_t)_{t\ge0}$ be the chain with transition matrix $P$ started
at $X_0=r$. Define $T_v:=\min\{t\ge0:X_t=v\}$ and
$T_B:=\max_{v\in B^c}T_v$. The \emph{stopped covering walk} is
$(X_0,\dots,X_{T_B})$, and its \emph{first-entrance forest} is
\[
F_B:=\{(X_{T_v-1},v):v\in B^c\}.
\]
The \emph{discovery order} $\tau(X)$ is the order in which the walk
first enters the vertices of $B^c$.
\end{definition}

The hitting times and $T_B$ are almost surely finite
\cite{LevinPeres2017}. Following parents strictly decreases first-entrance times until a
root is reached. Thus $v\mapsto X_{T_v-1}$ is a $B$-parent map, and
Lemma~\ref{lem:forest-bijection} gives $F_B\in\Fset_B$. Since the parent
of a vertex is entered before the vertex, $\tau(X)$ is a growth order
of this parent map. For $B=\{r\}$, the construction is the Aldous-Broder algorithm,
and $F_B$ is the first-entrance tree $F$ of \eqref{eq:ab}.

Figure~\ref{fig:walkexample} illustrates a stopped covering walk
$X_0,\dots,X_7$ from the seed $1$, with roots $B=\{1,2\}$.
The retained edges give the forest in Figure~\ref{fig:example}(a), with discovery order
$(3,4,5)$.

\begin{figure}[H]
\centering
\resizebox{0.75\textwidth}{!}{%
\begin{tikzpicture}[x=1cm,y=1cm,
  ent/.style={vtx,line width=1.6pt},
  bs/.style={rectangle,rounded corners=3pt,draw=black!70,font=\footnotesize,inner sep=2.5pt},
  rw/.style={-{Latex[length=2.2mm,line width=1.2pt]},black!60,dashed,line width=1.6pt},
  lb/.style={font=\footnotesize,black!80},
  tA/.style={draw=cOne,fill=cOne!12},
  tB/.style={draw=cTwo,fill=cTwo!12},
  tW/.style={draw=cW,fill=cW!15},
  eA/.style={ed,cOne},
  eB/.style={ed,cTwo},
  eW/.style={ed,cW}]
\begin{scope}[xshift=0cm,yshift=0cm]
\node[rootv,tA] (w0) at (0,0) {$1$};
\node[vtx,tA] (w1) at (0.95,0) {$3$};
\node[rootv,tA] (w2) at (1.9,0) {$1$};
\node[vtx,tA] (w3) at (2.85,0) {$3$};
\node[vtx,tA] (w4) at (3.8,0) {$4$};
\node[vtx,tA] (w5) at (4.75,0) {$3$};
\node[rootv,tB] (w6) at (5.7,0) {$2$};
\node[vtx,tB] (w7) at (6.65,0) {$5$};
\draw[thk,cOne] (w0)--(w1); \draw[ed] (w1)--(w2); \draw[ed] (w2)--(w3);
\draw[thk,cOne] (w3)--(w4); \draw[ed] (w4)--(w5); \draw[ed] (w5)--(w6);
\draw[thk,cTwo] (w6)--(w7);
\begin{scope}[on background layer]
\fill[black!9,rounded corners=4pt] (-0.36,-0.36) rectangle (0.36,0.36);
\fill[black!9,rounded corners=4pt] (0.59,-0.36) rectangle (3.21,0.36);
\fill[black!9,rounded corners=4pt] (3.44,-0.36) rectangle (6.06,0.36);
\end{scope}
\node[lb] at (0,-0.6) {$B_0=\{1,2\}$};
\node[lb] at (1.9,-0.6) {$B_1=\{1,2,3\}$};
\node[lb] at (4.75,-0.6) {$B_2=\{1,2,3,4\}$};
\node[lb] at (6.65,-0.6) {$V$};
\node[lb] at (0,0.6) {$t=0$};
\node[lb] at (6.65,0.6) {$t=T_B$};
\end{scope}
\end{tikzpicture}}
\caption{Thick steps mark first entrances into nonroots. Gray regions group the excursions between them.}
\label{fig:walkexample}
\end{figure}

To calculate the joint probability of a forest and its discovery order,
we use the harmonic extensions introduced in Section~\ref{sec:gammadef}
for the matrix
\begin{equation}\label{eq:Ldef}
L:=I-P^*,\qquad P^*:=\Pi^{-1}P^{\top}\Pi,\qquad \Pi:=\diag(\pi).
\end{equation}
Here $P^*$ is the transition matrix of the reversed chain. Since $\pi$
is stationary, $L$ has zero row sums. Its proper principal submatrices
are nonsingular by Lemma~\ref{lem:admissible}, so the required harmonic
extensions are well defined.

We collect the factors associated with the forest edges into a forest
weight and the harmonic factors along a growth order into an order
weight. Proposition~\ref{prop:orderlaw} below expresses the joint
probability in terms of these two weights.

\begin{definition}[Forest weight and order weight]\label{def:weight}
Let $f$ be a $B$-parent map, and let $F_0\in\Fset_B$ be its forest.
\begin{itemize}
\item[(a)] The \emph{forest weight} of $F_0$ is
\begin{equation}\label{eq:forestweight}
w(F_0):=\prod_{v\in B^c}\pi_{f(v)}P_{f(v)v}
=\Big(\prod_{v\in B^c}\pi_v\Big)\prod_{v\in B^c}P^{*}_{vf(v)} .
\end{equation}
\item[(b)] More generally, let $L\in\R^{V\times V}$ be any real matrix, and let $\tau\in\Orders_B(f)$
satisfy $\det L[B_\ell]\ne0$ for $0\le \ell\le k-1$. The \emph{order
weight} of $\tau$ is
\begin{equation}\label{eq:weight}
\omega_f(\tau):=\prod_{\ell=1}^{k-1}\tilde\Gamma^{B_{\ell-1}}_{f(\tau_{\ell+1}),\tau_\ell},
\end{equation}
where an empty product is $1$. It depends on $L$ and $f$, but we suppress
$L$ from the notation.
\end{itemize}
\end{definition}

\begin{remark}[Interpretation of the order weight]
For $L$ in \eqref{eq:Ldef}, the harmonic extensions describe exit
probabilities of the reversed chain. To see this, fix $U\subsetneq V$
and $v\in U^c$, and let $x_u$ be the probability of first leaving $U$
at $v$ from $u\in U$. Extend $x$ outside $U$ by $x_v=1$ and
$x_w=0$ for $w\ne v$. First-step conditioning gives
$(Lx)|_U=\zero$, so Lemma~\ref{lem:harmonic-extension} gives
$x=\gamma^U(e_v)$.

Thus, when $f(\tau_{\ell+1})\in B_{\ell-1}$, the factor at step
$\ell$ in \eqref{eq:weight} is the probability that the reversed chain
started at this parent first leaves $B_{\ell-1}$ at $\tau_\ell$.
Otherwise, the parent is $\tau_\ell$, and the factor is one by the
prescribed boundary value.
\end{remark}

\subsection{The order decomposition}\label{sec:orderlaw}

We now compute the joint probability of a fixed forest and discovery
order in terms of the two weights. To express the normalizing
constant, let
\begin{equation}\label{eq:lam}
\lambda_v:=\det\big((I-P)[V\setminus\{v\}]\big)\quad(v\in V),
\qquad \lambda:=(\lambda_v)_{v\in V}.
\end{equation}
The following proposition gives the decomposition for an arbitrary
root set and its simpler form when there is only one root.

\begin{proposition}[Order decomposition]\label{prop:orderlaw}
Let $P$ be irreducible and stochastic with stationary distribution
$\pi$, let $L$ and $\lambda$ be as in \eqref{eq:Ldef} and
\eqref{eq:lam}, let $B\subsetneq V$ be nonempty with seed $r\in B$, and
let $F_B$ and $\tau(X)$ be the first-entrance forest and discovery
order of the stopped covering walk. For every $F_0\in\Fset_B$ with
parent map $f$ and every $\tau\in\Orders_B(f)$,
\begin{equation}\label{eq:orderlaw}
\pr\big(F_B=F_0,\ \tau(X)=\tau\big)
=\frac{\det L[B]}
{\pi_r\big(\prod_{v\in B^c}\pi_v\big)\one^{\top}\lambda}
\big(L[B]^{-1}\big)_{f(\tau_1),\,r}\,w(F_0)\,\omega_f(\tau),
\end{equation}
and $\pr(F_B=F_0,\ \tau(X)=\tau)=0$ for every ordering $\tau$ of $B^c$
that is not a growth order of $f$. In particular, for $B=\{r\}$,
\begin{equation}\label{eq:orderlaw1}
\pr\big(F=F_0,\ \tau(X)=\tau\big)
=\frac{w(F_0)\cdot\omega_f(\tau)}{(\one^{\top}\lambda)\prod_{v\in V}\pi_v}.
\end{equation}
\end{proposition}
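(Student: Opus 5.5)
The plan is to compute the probability of the event $\{F_B=F_0,\ \tau(X)=\tau\}$ directly by cutting the walk at the first-entrance times $T_{\tau_1}<\dots<T_{\tau_k}$, and then to rewrite the resulting excursion factors in terms of $L$ and the harmonic extensions of \Cref{def:gamma}.

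\textbf{Step 1: excursion decomposition.} Put $\tau_0:=r$ and $G^U:=\big((I-P)[U]\big)^{-1}$ for $U\subsetneq V$; nonsingularity comes from \Cref{lem:admissible}. On the event in question, between $T_{\tau_{\ell-1}}$ and $T_{\tau_\ell}$ the walk starts at $\tau_{\ell-1}$. It stays in $B_{\ell-1}$, reaches $f(\tau_\ell)$, and then steps to $\tau_\ell$. Conversely, any concatenation of such excursions produces exactly this forest and this order. Summing over excursion paths inside $B_{\ell-1}$ gives the Green function entry, including the path of length zero. By the strong Markov property at the entrance times,
\[
\pr\big(F_B=F_0,\ \tau(X)=\tau\big)=\prod_{\ell=1}^{k}G^{B_{\ell-1}}_{\tau_{\ell-1},f(\tau_\ell)}\,P_{f(\tau_\ell)\tau_\ell}.
\]
If $\tau$ is not a growth order of $f$, then some parent $f(\tau_\ell)$ has not been visited before time $T_{\tau_\ell}$. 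That parent cannot be the state from which $\tau_\ell$ is first entered, so the probability is $0$.

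\textbf{Step 2: pass to $L$ and to harmonic extensions.} From \eqref{eq:Ldef}, $(I-P)[U]=\Pi_U^{-1}L[U]^{\top}\Pi_U$. Hence
\[
G^U_{x,y}=\frac{\pi_y}{\pi_x}\big(L[U]^{-1}\big)_{y,x}.
\]
For $\ell\ge1$, take column $\tau_\ell$ of $L[B_\ell]^{-1}$ and extend it by zero off $B_\ell$. This vector is annihilated by the rows of $L$ in $B_{\ell-1}$, vanishes on $B_\ell^c$, and takes the value $c_\ell:=(L[B_\ell]^{-1})_{\tau_\ell\tau_\ell}=\det L[B_{\ell-1}]/\det L[B_\ell]$ at $\tau_\ell$, by Cramer's rule. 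By \Cref{lem:harmonic-extension} it therefore equals $c_\ell\,\gamma^{B_{\ell-1}}(e_{\tau_\ell})$. Consequently, for $1\le\ell\le k-1$,
\[
\big(L[B_\ell]^{-1}\big)_{f(\tau_{\ell+1}),\tau_\ell}=\frac{\det L[B_{\ell-1}]}{\det L[B_\ell]}\,\tilde\Gamma^{B_{\ell-1}}_{f(\tau_{\ell+1}),\tau_\ell}.
\]
This identity also covers the case $f(\tau_{\ell+1})=\tau_\ell$. Multiplying over $\ell$, the determinant ratios telescope to $\det L[B]/\det L[B_{k-1}]$, and the $\tilde\Gamma$ factors give exactly $\omega_f(\tau)$. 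The first excursion is left as it is and contributes $(\pi_{f(\tau_1)}/\pi_r)(L[B]^{-1})_{f(\tau_1),r}$.

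\textbf{Step 3: bookkeeping of constants.} The $\pi$-ratios multiply to
\[
\frac{\prod_{\ell=1}^k\pi_{f(\tau_\ell)}}{\pi_r\prod_{\ell=1}^{k-1}\pi_{\tau_\ell}}.
\]
Combined with $\prod_\ell P_{f(\tau_\ell)\tau_\ell}$, this yields
\[
\frac{w(F_0)\,\pi_{\tau_k}}{\pi_r\prod_{v\in B^c}\pi_v}.
\]
It remains to evaluate $\det L[B_{k-1}]=\det L[V\setminus\{\tau_k\}]$. Since $L[U]$ is similar to the transpose of $(I-P)[U]$, this equals $\lambda_{\tau_k}$. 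By \Cref{lem:adj}(a) applied to $I-P$, we have $\lambda^{\top}(I-P)=\zero^{\top}$, so $\lambda$ is proportional to $\pi$, i.e.\ $\lambda_{\tau_k}=(\one^{\top}\lambda)\pi_{\tau_k}$. The factor $\pi_{\tau_k}$ then cancels, which gives \eqref{eq:orderlaw}. For $B=\{r\}$ we have $f(\tau_1)=r$, and $\det L[B]\,(L[B]^{-1})_{rr}=1$. Also $\pi_r\prod_{v\ne r}\pi_v=\prod_{v\in V}\pi_v$, which gives \eqref{eq:orderlaw1}.

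The main obstacle is Step 2. One has to identify correctly which column of which inverse appears in each excursion factor, and check that the zero-extended column is precisely the harmonic extension on the smaller base $B_{\ell-1}$, with the Cramer constant that makes the telescoping work. The index shift in \eqref{eq:weight}, where the factor for step $\ell$ uses the parent of $\tau_{\ell+1}$ and the base $B_{\ell-1}$, must line up with the excursion from $\tau_\ell$ inside $B_\ell$.
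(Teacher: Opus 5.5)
Your proposal is correct and follows the paper's proof essentially step for step. It uses the same factorization of the event into confined excursions summed by the Green function, the same reversal to $L[B_{\ell-1}]^{-1}$, the same normalized-column identity \eqref{eq:ratio} with telescoping determinant ratios, and the same identification $\det L[V\setminus\{\tau_k\}]=\lambda_{\tau_k}=\pi_{\tau_k}\one^{\top}\lambda$ that cancels $\pi_{\tau_k}$.

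The remaining differences are cosmetic. You reindex the harmonic factors up front rather than at the end, and you reprove Lemma~\ref{lem:admissible}'s ratio identity and Lemma~\ref{lem:stationary} inline. One step deserves a word of justification: calling $\lambda$ "proportional to $\pi$" uses that the left kernel of $I-P$ is one-dimensional. This follows from $\lambda_{\tau_k}>0$, or from uniqueness of the stationary distribution.
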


The proof sums the excursions between successive first entrances
using a Green function. Reversal expresses its entries through
$L[U]^{-1}$, and normalizing the columns gives harmonic extensions
multiplied by determinant ratios. These ratios cancel across
successive bases, leaving the normalizing factor in the proposition.
The following lemmas supply the identities used in this calculation.

\begin{lemma}[Green function, reversal, normalized columns]\label{lem:admissible}
Let $P$ be irreducible and stochastic with stationary distribution
$\pi$, and let $L$ be the matrix \eqref{eq:Ldef}. Then $L\one=\zero$
and $\det L[U]=\det\big((I-P)[U]\big)>0$ for every nonempty
$U\subsetneq V$. Moreover, for $u,v\in U$,
\begin{equation}\label{eq:green}
\Big(\big((I-P)[U]\big)^{-1}\Big)_{uv}
=\sum_{t\ge0}\big(P[U]^t\big)_{uv}
=\frac{\pi_v}{\pi_u}\big(L[U]^{-1}\big)_{vu},
\end{equation}
and, if $|U|\ge2$, the column $v$ of $L[U]^{-1}$ normalized by its
diagonal entry is a harmonic extension:
\begin{equation}\label{eq:ratio}
\big(L[U]^{-1}\big)_{vv}=\frac{\det L[U\setminus\{v\}]}{\det L[U]},
\qquad
\frac{L[U]^{-1}e_v}{(L[U]^{-1})_{vv}}
=\gamma^{U\setminus\{v\}}(e_v)\big|_U .
\end{equation}
\end{lemma}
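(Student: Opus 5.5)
We prove the four assertions of Lemma~\ref{lem:admissible} in order. The first three follow from the similarity $L=\Pi^{-1}(I-P)^{\top}\Pi$. The last follows from Lemma~\ref{lem:adj} applied to $L[U]$.

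\emph{Zero row sums and positivity.} Stationarity gives $P^{\top}\pi=\pi$, so $P^*\one=\Pi^{-1}P^{\top}\pi=\Pi^{-1}\pi=\one$, and hence $L\one=\zero$. Since $\Pi$ is diagonal, restricting to $U$ commutes with the similarity: $L[U]=\Pi[U]^{-1}\big((I-P)[U]\big)^{\top}\Pi[U]$, which gives $\det L[U]=\det((I-P)[U])$. For positivity, fix a nonempty $U\subsetneq V$. By irreducibility, from every $u\in U$ the chain exits $U$ with positive probability within $|U|$ steps. Hence $\|P[U]^{|U|}\|_\infty<1$, and the spectral radius of $P[U]$ is below $1$.
\begin{itemize}
\item Nonsingularity and the Green function: $(I-P)[U]$ is nonsingular, with inverse the convergent Neumann series $\sum_{t\ge0}P[U]^t$.
\item Sign: on the path $s\mapsto\det(I-sP[U])$, $s\in[0,1]$, the determinant never vanishes, because $P[U]$ has no eigenvalue $\ge1$ in modulus. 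The path starts at $1$, so $\det((I-P)[U])>0$.
\end{itemize}
This is the step needing the most care. The alternative is to note that the real eigenvalues of $I-P[U]$ are positive and the complex ones come in conjugate pairs.

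\emph{Reversal formula \eqref{eq:green}.} The first equality is the Neumann series above. For the second, invert the restricted similarity: $L[U]^{-1}=\Pi[U]^{-1}\big(((I-P)[U])^{-1}\big)^{\top}\Pi[U]$. Reading off entry $(v,u)$ gives $(L[U]^{-1})_{vu}=\pi_v^{-1}\big(((I-P)[U])^{-1}\big)_{uv}\pi_u$. Multiplying by $\pi_v/\pi_u$ yields \eqref{eq:green}.

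\emph{Normalized columns \eqref{eq:ratio}.} Assume $|U|\ge2$ and set $W:=U\setminus\{v\}$, which is nonempty with $\det L[W]>0$. Cramer's rule (the adjugate relation in \eqref{eq:adj}) gives $(L[U]^{-1})_{vv}=\det L[W]/\det L[U]$. Now let $y:=L[U]^{-1}e_v/(L[U]^{-1})_{vv}$, so that $y_v=1$ and $L[U]y=e_v/(L[U]^{-1})_{vv}$. The rows indexed by $W$ therefore give $L[W,U]\,y=\zero$.

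Extend $y$ by $0$ on $U^c$ to a vector $x\in\R^V$. For rows in $W$, $(Lx)_w=(L[U]y)_w=0$. Also $x$ equals $1$ at $v$ and $0$ on $U^c$, so $x|_{W^c}=e_v|_{W^c}$. By Lemma~\ref{lem:harmonic-extension} applied with $W$ (valid since $\det L[W]\ne0$), $x=\gamma^W(e_v)$. Restricting to $U$ gives the second identity in \eqref{eq:ratio}.
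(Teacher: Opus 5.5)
Your proof is correct and follows the paper's argument essentially step by step: the restricted similarity $L[U]=\Pi[U]^{-1}((I-P)[U])^{\top}\Pi[U]$, the spectral radius of $P[U]$ below one, the Neumann series, Cramer's rule, and uniqueness of the harmonic extension. The only differences are cosmetic (the homotopy $s\mapsto\det(I-sP[U])$ for the sign where the paper pairs conjugate eigenvalues, and $|U|$ rather than $n-1$ steps), plus one small slip: your overview credits the last part to Lemma~\ref{lem:adj}, which requires zero row sums and does not apply to the nonsingular $L[U]$, but the argument you actually give correctly uses Cramer's rule from \eqref{eq:adj}.
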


The Neumann series gives the Green function, and Cramer's rule gives
the normalized column. We provide the proof in Appendix~\ref{sec:app-proofs}.

\begin{lemma}[Cofactor form of the stationary distribution]\label{lem:stationary}
Let $P$ be irreducible and stochastic with stationary distribution
$\pi$. Then $\lambda_v>0$ for all $v$, and
$\pi=\lambda/(\one^{\top}\lambda)$.
\end{lemma}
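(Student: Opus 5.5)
The plan is to apply Lemma~\ref{lem:adj}(a) to $A:=I-P$ on $S=V$. Since $P$ is stochastic, $A\one=\zero$. The vector of principal cofactors of $A$ is exactly $\lambda$ from \eqref{eq:lam}, so part (a) gives $\lambda^{\top}(I-P)=\zero^{\top}$, that is, $\lambda^{\top}P=\lambda^{\top}$. Thus $\lambda$ is a left fixed vector of $P$. The plan then has three steps: show $\lambda$ is strictly positive, so it is a genuine invariant measure; normalize it; and invoke uniqueness of the stationary distribution of an irreducible chain \cite{LevinPeres2017} to conclude $\pi=\lambda/(\one^{\top}\lambda)$.

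For positivity, each $\lambda_v=\det\big((I-P)[V\setminus\{v\}]\big)$ is the determinant on a nonempty proper subset $U=V\setminus\{v\}$, which exists because $n\ge2$. Lemma~\ref{lem:admissible} states that $\det L[U]=\det\big((I-P)[U]\big)>0$ for every nonempty $U\subsetneq V$, so $\lambda_v>0$ for every $v$. Hence $\one^{\top}\lambda>0$, and $\lambda/(\one^{\top}\lambda)$ is a probability vector fixed by $P$. By uniqueness it equals $\pi$.

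One point needs checking. Lemma~\ref{lem:admissible} is stated using $\pi$ through $L$, but its positivity claim for $(I-P)[U]$ must not rely on the present lemma, or the argument would be circular. If it does, I would argue positivity directly. Since $P$ is irreducible, $P[U]$ is substochastic with spectral radius strictly less than one, because some state of $U$ leaks out of $U$ eventually. So $(I-P)[U]$ is a nonsingular M-matrix and has positive determinant. One concrete way to see this is a homotopy: $\det(I-sP[U])\ne0$ for $s\in[0,1]$ and equals $1$ at $s=0$, so it stays positive. An alternative route to the conclusion avoids positivity altogether. Since $\ker(I-P)^{\top}$ is one-dimensional and spanned by $\pi$, we have $\lambda=c\pi$. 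Then $\det\big((I-P)[V\setminus\{v\}]\big)>0$ for a single $v$ fixes $c>0$, and the formula follows.

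The main obstacle is this positivity, or nonvanishing, of the cofactors. The fixed-vector relation $\lambda^{\top}P=\lambda^{\top}$ is immediate from Lemma~\ref{lem:adj}(a).
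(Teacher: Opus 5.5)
Your proposal is correct and follows the paper's proof: Lemma~\ref{lem:adj}(a) applied to $I-P$ gives $\lambda^{\top}(I-P)=\zero^{\top}$, positivity of each $\lambda_v$ comes from Lemma~\ref{lem:admissible}, and uniqueness of the stationary distribution finishes the argument. Your circularity concern does not arise, because the paper proves $\det\big((I-P)[U]\big)>0$ in Lemma~\ref{lem:admissible} directly from the spectral radius of $P[U]$ being below one, which is essentially your fallback argument, and that proof does not use the present lemma.
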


\begin{proof}[Proof of Lemma~\ref{lem:stationary}]
Since $I-P$ has zero row sums, Lemma~\ref{lem:adj}(a) gives
$\lambda^{\top}(I-P)=\zero^{\top}$, and each $\lambda_v$ is positive by
Lemma~\ref{lem:admissible}. Hence $\lambda/(\one^{\top}\lambda)$ is a
stationary distribution, and the uniqueness of the stationary
distribution of an irreducible chain \cite{LevinPeres2017}
completes the proof.
\end{proof}

Lemma~\ref{lem:stationary} gives the cofactor formula underlying the
Markov chain tree theorem
\cite{VentselFreidlin1970,LeightonRivest1986,AnantharamTsoucas1989}
without using its expansion into tree sums.

\begin{proof}[Proof of Proposition~\ref{prop:orderlaw}]
An ordering that is not a growth order has probability zero, since
the walk visits a parent before it first enters its child. We therefore
fix $\tau\in\Orders_B(f)$, put $\tau_0:=r$, and let $B_\ell$
be the bases \eqref{eq:base}. Here $B_{\ell-1}$ is the set of vertices that
may be visited before the $\ell$-th first entrance into $B^c$. On the
event $\{F_B=F_0,\tau(X)=\tau\}$, for $\ell=1,\dots,k$, the walk moves
inside $B_{\ell-1}$ from $\tau_{\ell-1}$ to $f(\tau_\ell)$ and then steps
to $\tau_\ell$. By the Markov property, summing over the lengths of these
confined excursions and using \eqref{eq:green}, we have
\[
\pr\big(F_B=F_0,\ \tau(X)=\tau\big)
=\prod_{v\in B^c}P_{f(v)v}\cdot\prod_{\ell=1}^{k}
\Big(\big((I-P)[B_{\ell-1}]\big)^{-1}\Big)_{\tau_{\ell-1},\,f(\tau_\ell)} .
\]
By the reversal in \eqref{eq:green}, the factor $\ell$ equals
$(\pi_{f(\tau_\ell)}/\pi_{\tau_{\ell-1}})(L[B_{\ell-1}]^{-1})_{f(\tau_\ell),\tau_{\ell-1}}$.
Here the numerators complete $\prod_{v\in B^c}P_{f(v)v}$ to the forest weight
$w(F_0)$, and the denominators multiply to
$\prod_{\ell=1}^k\pi_{\tau_{\ell-1}}=\pi_r\big(\prod_{v\in B^c}\pi_v\big)/\pi_{\tau_k}$.
For $2\le \ell\le k$, applying \eqref{eq:ratio} with $U:=B_{\ell-1}$ and
$v:=\tau_{\ell-1}$, so that $U\setminus\{v\}=B_{\ell-2}$, we have
\[
\big(L[B_{\ell-1}]^{-1}\big)_{f(\tau_\ell),\,\tau_{\ell-1}}
=\big(\gamma^{B_{\ell-2}}(e_{\tau_{\ell-1}})\big)_{f(\tau_\ell)}\cdot
\frac{\det L[B_{\ell-2}]}{\det L[B_{\ell-1}]} .
\]
The product of these ratios simplifies to
$\det L[B]/\det L[V\setminus\{\tau_k\}]$, where
\[
\det L[V\setminus\{\tau_k\}]\stackrel{(a)}{=}\lambda_{\tau_k}
\stackrel{(b)}{=}\pi_{\tau_k}\one^{\top}\lambda ,
\]
and $(a)$ and $(b)$ follow from Lemma~\ref{lem:admissible} and
Lemma~\ref{lem:stationary}, respectively. The factor $\ell=1$ is the entry
$(L[B]^{-1})_{f(\tau_1),r}$. Finally, the factor
$\pi_{\tau_k}$ cancels, and the harmonic factors, reindexed by
$\ell\mapsto \ell-1$, form $\omega_f(\tau)$, giving
\eqref{eq:orderlaw}. For $B=\{r\}$, we have $\det L[B]=L_{rr}$
and $(L[B]^{-1})_{rr}=1/L_{rr}$, which gives \eqref{eq:orderlaw1}.
This completes the proof.
\end{proof}

For a single root, summing \eqref{eq:orderlaw1} over the compatible
orders gives the probability of the fixed tree. The forest weight and
normalizing factor are common to these orders, so \eqref{eq:ab}
follows once we establish
\begin{equation}\label{eq:unitmass}
\sum_{\tau\in\Orders_{\{r\}}(f)}\omega_f(\tau)=1
\end{equation}
for every arborescence of positive weight.

\section{The order-mass identity}\label{sec:identity}

To obtain the tree law from the order decomposition, it remains to
evaluate the sum in \eqref{eq:unitmass}. We establish a more general
identity for rooted forests and real matrices $L\in\R^{V\times V}$
with zero row sums. Fix a nonempty root set $B\subsetneq V$, write
$k:=|B^c|$, and let $f$ be a $B$-parent map. The harmonic extensions
in its order weights require the following nonsingularity condition.

\begin{assumption}[Nonsingular bases]\label{ass:hyp}
$L\one=\zero$, and $\det L[B_\ell]\ne0$ for every growth order
$\tau\in\Orders_B(f)$ and every $0\le \ell\le k-1$.
\end{assumption}

\begin{remark}\label{rem:hyp}
Assumption~\ref{ass:hyp} ensures that each Dirichlet problem along the
growth of $f$ has a unique solution. The assumption is satisfied for
every $B$ and $f$
when $L$ is the matrix \eqref{eq:Ldef} of an irreducible chain, by
Lemma~\ref{lem:admissible}, and when $L$ is the Laplacian of a connected undirected
graph with positive weights, whose proper principal submatrices are
positive definite.

With several roots, the first vertex of a growth order can be a child
of different roots. We group the growth orders according to this root
and sum their weights separately.

\begin{definition}[Order mass]\label{def:ordermass}
Under Assumption~\ref{ass:hyp}, the \emph{order mass} of $f$ at the root $r\in B$
is
\begin{equation}\label{eq:g}
g_r(B,f):=\sum_{\tau\in\Orders_B(f):\ f(\tau_1)=r}\omega_f(\tau),
\end{equation}
the total order weight \eqref{eq:weight} of the growth orders whose
first vertex is a child of $r$. We write $g(B,f):=(g_r(B,f))_{r\in B}$.
\end{definition}

\begin{theorem}[Order-mass identity]\label{thm:cofactor}
Let $L\in\R^{V\times V}$, $B$ and $f$ satisfy Assumption~\ref{ass:hyp}. Then,
for every $r\in B$,
\begin{equation}\label{eq:cofactor}
g_r(B,f)
=\det\big((I-\Gamma^BM_f)[B\setminus\{r\}]\big),
\end{equation}
with the determinant of the empty matrix being $1$. In particular, if
$B=\{r\}$, then the order weights of the growth orders of every
arborescence rooted at $r$ sum to $1$.
\end{theorem}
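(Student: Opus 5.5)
We prove \eqref{eq:cofactor} by induction on $k=|B^c|$, following the recursion suggested in the introduction: group growth orders by their first vertex $v:=\tau_1$, treat $v$ as a new root, and relate the order masses on $B'=B\cup\{v\}$ to those on $B$ by a cofactor relation.

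\emph{Base case.} When $k=1$, $B^c=\{v\}$ with $f(v)=s$. The unique growth order has empty weight product, so $g_r=1(r=s)$. On the other side, $\Gamma^B\in\R^{B\times\{v\}}$ satisfies $\Gamma^B\one=\one$, so $\Gamma^B=\one$. Since $M_f=e_s^\top$, we get $I-\Gamma^BM_f=I-\one e_s^\top$. This matrix has zero column $s$ except for the entry $1-1=0$ at $(s,s)$. Deleting row and column $r$, the principal minor is $1$ if $r=s$ (identity). If $r\ne s$, the remaining column $s$ is identically zero and the determinant is $0$. This matches $g_r$.

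\emph{Recursion for $g$.} Fix $v\in B^c$ with $f(v)\in B$ and set $B':=B\cup\{v\}$. The parent map $f':=f|_{B'^c}$ is a $B'$-parent map, and its growth orders $\tau'$ are exactly the tails of growth orders of $f$ beginning with $v$. Since the bases of $\tau'$ are bases of $\tau$, Assumption~\ref{ass:hyp} is inherited. By \eqref{eq:weight},
\[
\omega_f(\tau)=\tilde\Gamma^{B}_{f(\tau_2),v}\,\omega_{f'}(\tau').
\]
Grouping by the parent $s\in B'$ of $\tau_2$ gives
\[
g_r(B,f)=\sum_{v:\,f(v)=r}\ \sum_{s\in B'}\tilde\Gamma^B_{s,v}\,g_s(B',f').
\]
Here $\tilde\Gamma^B_{v,v}=1$ and $\tilde\Gamma^B_{s,v}=\Gamma^B_{s,v}$ for $s\in B$. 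The case $k-1=0$ must be handled separately, with $g(B',f')$ interpreted as $e_v$ there. By induction, $g_s(B',f')=d'_s$, the principal cofactors of $A':=I-\Gamma^{B'}M_{f'}$. This matrix has zero row sums because $\Gamma^{B'}\one=\one$ and $M_{f'}\one=\one$, so Lemma~\ref{lem:adj} applies.

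\emph{Cofactor step (main obstacle).} We must show that the right-hand side equals $d_r$, the principal cofactors of $A:=I-\Gamma^BM_f$. The matrix $A$ also has zero row sums. The plan has three parts.

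First, express $\Gamma^{B'}$ through $\Gamma^B$ by a Schur-complement step, i.e.\ harmonic extension into $B'^c$ followed by extension through $v$. Using the block form of $L[B']$, for $s\in B$ and $u\in B'^c$,
\[
\Gamma^{B}_{s,u}=\Gamma^{B'}_{s,u}+\Gamma^{B'}_{s,v}\,\Gamma^{B}_{v\text{-}\mathrm{row}}\ldots
\]
This is more cleanly stated via the rows of $\tilde\Gamma$: $\tilde\Gamma^B=\tilde\Gamma^{B'}\,\tilde\Gamma^{B}[B'^c\cup\{v\}\text{ rows}]$-type composition, obtained from Lemma~\ref{lem:harmonic-extension} by uniqueness.

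Second, compare root indicators. $M_f$ agrees with $M_{f'}$ on $B'^c$ after merging column $v$ into column $f(v)$, and row $v$ of $M_f$ is $e_{f(v)}^\top$.

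Third, combine. Then $A$ should be obtained from $A'$ as a Schur complement at index $v$ combined with a column merge, i.e.\ a rank-one perturbation. Lemma~\ref{lem:adj}(b), together with $\adj(A')=\one d'^\top$, should convert $d$ into $\sum_s \tilde\Gamma^B_{s,v}d'_s$, accumulated over the children $v$ of $r$.

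The difficulty is that $g$ sums over all first vertices $v$, while each $v$ yields a different $B'$. So the identity presumably needs to be shown as a sum $d=\sum_v(\cdots)$, perhaps by expanding $\det(A[B\setminus\{r\}])$ along the columns contributed by each child block $D_v$ using the column-replacement formula in \eqref{eq:adj}. I would first verify the single-child-of-a-root case, then use multilinearity in the block columns of $\Gamma^BM_f$, since $\Gamma^BM_f=\sum_{v}\Gamma^B[\cdot,D_v]\one\,e_{f(v)}^\top$ over first-generation children $v$.

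Finally, for $B=\{r\}$ the determinant is empty and equals $1$, which gives the last claim.
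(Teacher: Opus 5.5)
Your base case, the recursion $g_r(B,f)=\sum_{v\in f^{-1}(r)}\sum_{s\in B'}\tilde\Gamma^B_{s,v}\,g_s(B',f')$, the inheritance of Assumption~\ref{ass:hyp}, and the fold description of $M_f$ in terms of $M_{f'}$ are all correct, and they match the paper's set-up in \eqref{eq:step1}. The gap is the cofactor step. This is the real content of the theorem, and you leave it as a plan.

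\textbf{The comparison of extensions is misstated.} $\Gamma^{B'}_{s,v}$ is undefined, because $v\in B'$ is not a column index of $\Gamma^{B'}$. The correct relation is \eqref{eq:onestep}: $\Gamma^{B'}_{s,u}=\Gamma^B_{s,u}+\Gamma^{B'}_{v,u}\Gamma^B_{s,v}$ for $s\in B$ and $u\in(B')^c$.

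\textbf{The proposed expansion over children fails.} The children $v$ of $r$ enter $\Gamma^BM_f$ only through its column $r$, namely $\Gamma^B\one_{D_r}=\sum_{v\in f^{-1}(r)}\Gamma^B\one_{D_v}$. That column is deleted in $A[B\setminus\{r\}]$. So $d_r$ has no expansion along columns contributed by the child blocks of $r$, and there is no term-by-term matching of children with pieces of $d_r$.

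\textbf{Two ideas are missing.}

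First, for each child $v$ separately, $\sum_{s\in B'}\tilde\Gamma^B_{s,v}d'_s=(\Gamma^B\one_{D_v})^{\top}d$. So the contribution of $v$ pairs with the whole cofactor vector $d$ of $A$, not with a piece of $d_r$. The paper proves this (Lemma~\ref{lem:transfer}) by computing one auxiliary determinant in two ways.
\begin{itemize}
\item Let $G$ have columns $\gamma^B((M_{f'})_{\cdot u})|_{B'}$ for $u\in B'$, and put $a:=\gamma^B(e_v)|_{B'}$.
\item By \eqref{eq:onestep}, $A'=(I-G)-a\,(\Gamma^{B'}M_{f'})_{v\cdot}$. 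This is a rank-one change by a row summing to one, so Lemma~\ref{lem:adj}(b) gives $\det(I-G)=a^{\top}d'$.
\item Row $v$ of $G$ vanishes, so $\det(I-G)=\det\big((I-G)[B]\big)$.
\item Because the fold of $M_{f'}$ into $r$ is $M_f$, we have $(I-G)[B]=A+\Gamma^B\one_{D_v}e_r^{\top}$. Lemma~\ref{lem:adj}(b) then gives $\det(I-G)=(\Gamma^B\one_{D_v})^{\top}d$.
\end{itemize}

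Second, summing over the children of $r$ and using Lemma~\ref{lem:partition},
\[
g_r(B,f)=(\Gamma^B\one_{D_r})^{\top}d=(e_r-A_{\cdot r})^{\top}d=d_r-(d^{\top}A)_r=d_r .
\]
The last step is the left-kernel identity $d^{\top}A=\zero^{\top}$ of Lemma~\ref{lem:adj}(a). This identity is what brings the deleted column $r$ back into play. It is absent from your plan, and even your proposed single-child case needs it.
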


The individual terms on the left of \eqref{eq:cofactor} depend on the
edges within the trees and on the successive bases of each growth
order. The right side, however, depends on the forest only through
its root indicator. Consequently, forests with the same root blocks
have the same order masses, even when their internal tree structures
differ.

\begin{remark}[The chain on the roots]\label{rem:reading}
For a Markov chain, $\Gamma^BM_f$ is stochastic: its entry $(u,u')$
is the probability that the reversed chain started at the root $u$
first leaves $B$ into the tree of $u'$. By Theorem~\ref{thm:cofactor}
and Lemma~\ref{lem:adj}(a), the order masses form an invariant measure
of this chain on the roots. This measure need not be normalized and
may be zero.
Figure~\ref{fig:rootchain} illustrates this chain for the forest in
Figure~\ref{fig:example}.
\end{remark}

\begin{figure}[H]
\centering
\resizebox{0.60\textwidth}{!}{%
\begin{tikzpicture}[x=1cm,y=1cm,
  ent/.style={vtx,line width=1.6pt},
  bs/.style={rectangle,rounded corners=3pt,draw=black!70,font=\footnotesize,inner sep=2.5pt},
  rw/.style={-{Latex[length=2.2mm,line width=1.2pt]},black!60,dashed,line width=1.6pt},
  lb/.style={font=\footnotesize,black!80},
  tA/.style={draw=cOne,fill=cOne!12},
  tB/.style={draw=cTwo,fill=cTwo!12},
  tW/.style={draw=cW,fill=cW!15},
  eA/.style={ed,cOne},
  eB/.style={ed,cTwo},
  eW/.style={ed,cW}]
\begin{scope}[xshift=0cm,yshift=0cm]
\node[rootv,tA] (d1) at (0,0.4) {$1$};
\node[rootv,tB] (d2) at (1.6,0.4) {$2$};
\node[vtx,tA] (d3) at (0,-0.5) {$3$};
\node[vtx,tA] (d4) at (0,-1.4) {$4$};
\node[vtx,tB] (d5) at (1.6,-0.5) {$5$};
\draw[eA] (d1)--(d3); \draw[eA] (d3)--(d4); \draw[eB] (d2)--(d5);
\draw[rw] (d1.north east) to[out=40,in=140] (d2.north west);
\draw[rw] (d2.east) to[out=-20,in=20] (d5.east);
\node[rootv,tA] (s1) at (3.4,-0.4) {$1$};
\node[rootv,tB] (s2) at (5.4,-0.4) {$2$};
\draw[ed] (s1) to[bend left=25] node[lb,above]{$(\Gamma^BM_f)_{12}$} (s2);
\draw[ed] (s2) to[bend left=25] node[lb,below]{$(\Gamma^BM_f)_{21}$} (s1);
\draw[ed] (s1) edge[loop left,min distance=5mm,looseness=8] (s1);
\draw[ed] (s2) edge[loop right,min distance=5mm,looseness=8] (s2);
\end{scope}
\end{tikzpicture}}
\caption{The dashed path starts at root $1$ and first leaves $B$ into the tree of root $2$.}
\label{fig:rootchain}
\end{figure}
Supplementary Section~\ref{sec:app-comb} gives the forest-sum interpretation and
consequences for successive sampling and linear extensions.

\section{Proof of the order-mass identity}\label{sec:proof}

We prove the identity by induction on the number of nonroot vertices.
For a growth order starting at $w$, we add $w$ to the root set and
remove its edge from its parent. The remaining order is then a growth
order of a forest rooted at $B\cup\{w\}$. The induction hypothesis
expresses its order masses as cofactors on this enlarged root set. To complete the induction, we need to relate these cofactors to those
on $B$.

We first establish the required cofactor relation for two bases
differing by one vertex, then apply it to the forest.

\subsection{The one-vertex transfer lemma}

We begin by comparing harmonic extensions on $U$ and
$U':=U\cup\{w\}$, where $w\in U^c$, $U'\subsetneq V$, and
$\det L[U]\ne0$, $\det L[U']\ne0$. If the prescribed value at $w$ is
zero, the extensions differ by a multiple of $\gamma^U(e_w)$:
\begin{equation}\label{eq:onestep}
\gamma^{U'}(m)=\gamma^{U}(m)+\big(\gamma^{U'}(m)\big)_w\,\gamma^{U}(e_w)
\qquad(m\in\R^{U^c},\ m_w=0),
\end{equation}
where on the left, $m$ is read as a vector on $(U')^c$. Indeed, the
difference of the two sides is annihilated by the rows $U$ of $L$ and
vanishes off $U$, so it is $\zero$ by uniqueness.

To compare the cofactors in \eqref{eq:cofactor}, we write the matrix
there in terms of a base and an aggregation. For $U\subsetneq V$
with $\det L[U]\ne0$ and a matrix $M$ with rows
indexed by $U^c$ and columns by $U$, we write
\begin{equation}\label{eq:N}
N(U,M):=I-\Gamma^UM,\qquad
N(U,M)_{\cdot u}=e_u-\Gamma^UM_{\cdot u}\quad(u\in U).
\end{equation}
When $M\one=\one$ and $L\one=\zero$, the rows of $N(U,M)$ sum to zero,
since $N(U,M)\one=\one-\Gamma^U\one=\zero$. Thus Lemma~\ref{lem:adj}
applies to $N(U,M)$. Taking $U=B$ and $M=M_f$ gives the matrix in
\eqref{eq:cofactor}.

We next specify how the aggregations on $U$ and $U'$ are related.
For a forest, returning $w$ to its parent $r$ merges the block rooted
at $w$ into the block rooted at $r$. We express this relation as an
operation on aggregation matrices.

\begin{definition}[Fold of an aggregation]\label{def:fold}
Let $U\subsetneq V$ be nonempty, $w\in U^c$, $U':=U\cup\{w\}$,
and $r\in U$. Let $M'$ have rows indexed by $(U')^c$, columns indexed
by $U'$, and unit row sums. The \emph{fold} of $M'$ into $r$ is the
matrix $M$ with
rows $U^c$ and columns $U$ obtained by adding the column $w$ of $M'$ to
the column $r$, and inserting $e_r^{\top}$ as the row of $w$:
\begin{equation}\label{eq:fold}
M_{w\cdot}:=e_r^{\top},\qquad
M_{v\cdot}:=M'_{v,U}+M'_{vw}\,e_r^{\top}\quad(v\in (U')^c).
\end{equation}
\end{definition}

The fold preserves unit row sums. For ease of notation, we read every
column of $M'$ as a vector on $U^c$ with the entry $0$ at $w$. Then
\eqref{eq:fold} says that, for $u\in U$,
\begin{equation}\label{eq:foldcol}
M_{\cdot u}=M'_{\cdot u}+1(u=r)\,\big(M'_{\cdot w}+e_w\big).
\end{equation}
Figure~\ref{fig:transfer} illustrates the move and the fold for the
forest in Figure~\ref{fig:example}. The following lemma relates the
principal cofactors before and after the move.

\begin{figure}[H]
\centering
\resizebox{\textwidth}{!}{%
\begin{tikzpicture}[x=1cm,y=1cm,
  ent/.style={vtx,line width=1.6pt},
  bs/.style={rectangle,rounded corners=3pt,draw=black!70,font=\footnotesize,inner sep=2.5pt},
  rw/.style={-{Latex[length=2.2mm,line width=1.2pt]},black!60,dashed,line width=1.6pt},
  lb/.style={font=\footnotesize,black!80},
  tA/.style={draw=cOne,fill=cOne!12},
  tB/.style={draw=cTwo,fill=cTwo!12},
  tW/.style={draw=cW,fill=cW!15},
  eA/.style={ed,cOne},
  eB/.style={ed,cTwo},
  eW/.style={ed,cW}]
\begin{scope}[yshift=0cm]
\node[rootv,tA] (e1) at (0,0.4) {$1$};
\node[rootv,tW] (e3) at (1.3,0.4) {$3$};
\node[rootv,tB] (e2) at (2.6,0.4) {$2$};
\node[vtx,tW] (e4) at (1.3,-0.5) {$4$};
\node[vtx,tB] (e5) at (2.6,-0.5) {$5$};
\draw[eW] (e3)--(e4); \draw[eB] (e2)--(e5);
\node[lb,above=1pt,text=cW] at (e3.north) {$w$};
\node[pl] at (1.3,-1.6) {(a) Moving $w$ into the roots};
\end{scope}
\begin{scope}[xshift=5.2cm,yshift=0cm]
\node[font=\footnotesize] (m1) at (0,0) {$M_{f'}=\begin{array}{c|ccc}
 & 1 & 2 & \color{cW}3\\\hline 4 & 0 & 0 & \color{cW}1\\ 5 & 0 & 1 & \color{cW}0\end{array}$};
\node[font=\footnotesize] (m2) at (4.6,0) {$M_{f}=\begin{array}{c|cc}
 & 1 & 2\\\hline \color{cW}3 & \color{cW}1 & \color{cW}0\\ 4 & 1 & 0\\ 5 & 0 & 1\end{array}$};
\draw[-{Latex[length=2mm,line width=1.2pt]},cW,line width=1.6pt] ($(m1.east)+(0.05,0)$)--node[lb,below=1pt,align=center]{fold\\ into $1$}($(m2.west)+(-0.05,0)$);
\node[pl] at (2.3,-1.6) {(b) Folding into $r$};
\end{scope}
\end{tikzpicture}}
\caption{The transferred vertex and its block are shown in green.}
\label{fig:transfer}
\end{figure}

\begin{lemma}[One-vertex transfer]\label{lem:transfer}
Let $L\in\R^{V\times V}$ satisfy $L\one=\zero$, and let $U\subsetneq V$ be
nonempty with $|U|\le n-2$, $w\in U^c$, $U':=U\cup\{w\}$,
$\det L[U]\ne0$ and $\det L[U']\ne0$. Let $M'$ have rows $(U')^c$,
columns $U'$ and unit row sums, let $r\in U$, and let $M$ be the fold
\eqref{eq:fold} of $M'$ into $r$. Let $d'\in\R^{U'}$ and $d\in\R^{U}$
be the vectors of principal cofactors of $N(U',M')$ and $N(U,M)$. Then
\begin{equation}\label{eq:transfer}
\sum_{u\in U'}\big(\gamma^{U}(e_w)\big)_u\,d'_u
=\sum_{u\in U}\big(\gamma^{U}(M'_{\cdot w}+e_w)\big)_u\,d_u .
\end{equation}
\end{lemma}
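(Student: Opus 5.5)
We compare the two sides through Lemma~\ref{lem:adj}(b). The plan is to write each side as the determinant of a rank-one perturbation of $N':=N(U',M')$ and of $N:=N(U,M)$, respectively. Then we relate $N$ to $N'$ through the one-step relation \eqref{eq:onestep}. Both matrices have zero row sums, since $M'\one=\one$, $M\one=\one$ (the fold preserves unit row sums) and $L\one=\zero$. Hence Lemma~\ref{lem:adj} applies to each of them.

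\emph{Step 1: express both sides as determinants.} Put $a:=\gamma^U(e_w)|_{U'}$, so that $a_w=1$ and $a_u=\Gamma^U_{uw}$ for $u\in U$. Put $h:=\gamma^U(M'_{\cdot w}+e_w)|_U=\Gamma^U(M'_{\cdot w}+e_w)\in\R^U$. Lemma~\ref{lem:adj}(b), applied with $c=e_w$ (which satisfies $c^\top\one=1$), gives $\sum_{u\in U'}a_ud'_u=\det(N'+a e_w^\top)$. Applied with $c=e_r$, it gives $\sum_{u\in U}h_ud_u=\det(N+he_r^\top)$. It therefore suffices to show $\det(N'+ae_w^\top)=\det(N+he_r^\top)$.

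\emph{Step 2: relate $N$ to $N'$.} For $v\in(U')^c$, apply \eqref{eq:onestep} with $m=e_v$ and restrict to $U$. This gives $\Gamma^U_{\cdot v}=\Gamma^{U'}_{U,v}-\Gamma^{U'}_{wv}\,a|_U$. We multiply by a column $M'_{\cdot u}$ with $u\in U$, which vanishes at $w$. We then use \eqref{eq:foldcol}, namely $\Gamma^UM_{\cdot u}=\Gamma^UM'_{\cdot u}+1(u=r)h$, together with the fact that $N'_{wu}=-(\Gamma^{U'}M')_{wu}$ for $u\ne w$. These give, for $u\in U$,
\[
N_{\cdot u}=N'_{U,u}-a|_U\,N'_{wu}-1(u=r)\,h .
\]
In matrix form, let $E:=[\,I_U\ \ {-a|_U}\,]\in\R^{U\times U'}$ and $Q:=EN'\in\R^{U\times U'}$. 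Then $Q[U,U]=N+he_r^\top$. Since $N'\one=\zero$, we have $Q\one=\zero$. Hence the column $w$ of $Q$ equals $-Q[U,U]\one=-(N\one+h)=-h$.

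\emph{Step 3: evaluate $\det(N'+ae_w^\top)$ by elementary operations.} Subtracting $a_u$ times row $w$ from row $u$, for each $u\in U$, leaves the determinant unchanged. The rows indexed by $U$ become $Q$ with column $w$ replaced by $Q_{\cdot w}+a|_U-a|_U=-h$. Row $w$ stays $(N'_{w,U},\,N'_{ww}+1)$. Next, add all columns indexed by $U$ to column $w$. The top block of that column becomes $-h+Q[U,U]\one=-h+h=\zero$. The bottom entry becomes $1+N'_{w\cdot}\one=1$. The resulting matrix is block triangular, so its determinant is $\det Q[U,U]\cdot1=\det(N+he_r^\top)$. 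This proves \eqref{eq:transfer}.

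The hypotheses $|U|\le n-2$, $\det L[U]\ne0$ and $\det L[U']\ne0$ are exactly what is needed for $\Gamma^U$, $\Gamma^{U'}$ and \eqref{eq:onestep} to be defined. The main obstacle is Step 2. There the bookkeeping of indices must be handled with care: the columns of $M'$ are read on $U^c$ with a zero at $w$, the row $w$ of $\Gamma^{U'}M'$ must be identified with $-N'_{w,U}$, and the fold contributes the extra column term at $r$. Once $N+he_r^\top=(EN')[U,U]$ is established, the determinant manipulation in Step 3 is routine.
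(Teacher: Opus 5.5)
Your proof is correct and is essentially the paper's argument. Both sides are evaluated via Lemma~\ref{lem:adj}(b) as determinants of rank-one perturbations of $N'$ and $N$, and \eqref{eq:onestep} links them through the common $U$-block $N+\Gamma^U(M'_{\cdot w}+e_w)\,e_r^{\top}$. The one difference is the perturbation vector: the paper perturbs $N'$ by $a\,(\Gamma^{U'}M')_{w\cdot}$ rather than by $a\,e_w^{\top}$, which gives the matrix $I-G$ with $G:=[\gamma^U(M'_{\cdot u})|_{U'}]_{u\in U'}$, whose row $w$ is already $e_w^{\top}$, so your eliminations in Step~3 reduce to a single Laplace expansion there.
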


We prove the relation by evaluating one auxiliary determinant using
each base.

\begin{proof}[Proof of Lemma~\ref{lem:transfer}]
We write $N':=N(U',M')$ and $N:=N(U,M)$, both of which have zero row sums.
To compare their cofactors, we first extend the columns of $M'$ using
the original base $U$. Put
\[
G:=\Big[\gamma^{U}(M'_{\cdot u})\big|_{U'}\Big]_{u\in U'}\in\R^{U'\times U'},
\qquad
a:=\gamma^{U}(e_w)\big|_{U'}\in\R^{U'} .
\]
To express $N'$ in terms of $G$, fix $u\in U'$. By
\eqref{eq:N}, the column $u$ of $N'$ is
$e_u-\gamma^{U'}(M'_{\cdot u})\big|_{U'}$. Since $M'_{\cdot u}$
vanishes at $w$, \eqref{eq:onestep} applies to $m:=M'_{\cdot u}$, and
its coefficient is $\big(\gamma^{U'}(M'_{\cdot u})\big)_w
=(\Gamma^{U'}M')_{wu}$. Restricting \eqref{eq:onestep} to $U'$, we
obtain
\[
\gamma^{U'}(M'_{\cdot u})\big|_{U'}=G_{\cdot u}+(\Gamma^{U'}M')_{wu}\,a,
\qquad\text{hence}\qquad
N'_{\cdot u}=e_u-G_{\cdot u}-(\Gamma^{U'}M')_{wu}\,a .
\]
Collecting the columns $u\in U'$, we have
\begin{equation}\label{eq:rank1}
N'=(I-G)-a\,(\Gamma^{U'}M')_{w\cdot} .
\end{equation}
Thus $I-G$ is a rank-one perturbation of $N'$. We evaluate its
determinant first through $N'$ and then through $N$.

\emph{Evaluation using the enlarged base.} By \eqref{eq:rank1},
$I-G=N'+a\,(\Gamma^{U'}M')_{w\cdot}$. Since $M'\one=\one$ and
$\Gamma^{U'}\one=\one$, the row $(\Gamma^{U'}M')_{w\cdot}$ sums to one.
Applying Lemma~\ref{lem:adj}(b) to $N'$, we obtain
\begin{equation}\label{eq:Dleft}
\det(I-G)=a^{\top}d'=\sum_{u\in U'}\big(\gamma^{U}(e_w)\big)_u\,d'_u .
\end{equation}

\emph{Evaluation using the original base.} The row $w$ of $G$ vanishes,
since $(\gamma^{U}(M'_{\cdot u}))_w=(M'_{\cdot u})_w=0$ for every
$u\in U'$. Expanding $\det(I-G)$ along this row therefore gives
$\det(I-G)=\det((I-G)[U])$. To express the remaining matrix through
$N$, we apply \eqref{eq:foldcol} and the linearity of $\gamma^U$.
Together with \eqref{eq:N}, these show that its column $u\in U$ is
\[
e_u-\Gamma^UM_{\cdot u}+1(u=r)\,\Gamma^U(M'_{\cdot w}+e_w)
=N_{\cdot u}+1(u=r)\,\Gamma^U(M'_{\cdot w}+e_w),
\]
that is,
\begin{equation}\label{eq:Dblock}
(I-G)[U]=N+\Gamma^{U}(M'_{\cdot w}+e_w)\,e_r^{\top} .
\end{equation}
Since $N\one=\zero$, Lemma~\ref{lem:adj}(b) gives
\[
\det\big((I-G)[U]\big)=\big(\Gamma^{U}(M'_{\cdot w}+e_w)\big)^{\top}d
=\sum_{u\in U}\big(\gamma^{U}(M'_{\cdot w}+e_w)\big)_u\,d_u .
\]
Combining the two evaluations of $\det(I-G)$ gives \eqref{eq:transfer}.
This completes the proof.
\end{proof}

\subsection{Proof of the theorem}

We now apply the transfer lemma to the root indicators of a forest
before and after its first vertex is added to the root set. We first
treat root sets with at least two elements and then use this result
for the single-root case.

\begin{proof}[Proof of Theorem~\ref{thm:cofactor}]
\emph{Step 1: root sets with at least two elements.} We fix $L$ and
induct on $k$, simultaneously for all root sets with at least two
elements and all parent maps satisfying Assumption~\ref{ass:hyp}.
Write $N:=N(B,M_f)$ and $d_u:=\det(N[B\setminus\{u\}])$.
We must prove $g_r(B,f)=d_r$ for each $r\in B$. For this, we will use
$d^{\top}N=\zero^{\top}$ from Lemma~\ref{lem:adj}(a).

For the base case $k=1$, write $B^c=\{w\}$ and $f(w)=:u\in B$.
The unique growth order $(w)$ has weight one, so $g_r(B,f)=1(r=u)$. By
\eqref{eq:unitsum}, $\Gamma^B=\one$ and $\Gamma^BM_f=\one e_{u}^{\top}$, so
$N=I-\one e_{u}^{\top}$ and \eqref{eq:adj} gives
$d_r=1-\sum_{v\in B\setminus\{r\}}(e_{u})_v=1(r=u)$.

For the induction step $k\ge2$, fix $r\in B$ and group the growth orders with
$f(\tau_1)=r$ by their first vertex $w\in f^{-1}(r)$. We put
$B':=B\cup\{w\}$ and let $f'$ be the restriction of $f$ to
$B^c\setminus\{w\}$. Then $f'$ is a $B'$-parent map, and deleting the first vertex gives
a bijection from the growth orders of $f$ starting at $w$ onto
$\Orders_{B'}(f')$. Conversely, every growth order of $f'$ can be
prefixed by $w$, so all its proper bases are bases of $f$.
Thus Assumption~\ref{ass:hyp} holds for $(B',f')$.

For corresponding orders, the definition of the order weight gives
\[
\omega_f(\tau)=(\gamma^{B}(e_w))_{f(\tau_2)}\,\omega_{f'}(\tau_2,\dots,\tau_k),
\]
where the last order weight is taken with respect to $B'$.
We group these shortened orders by $u:=f(\tau_2)\in B'$ and apply
the induction hypothesis to each order mass of $(B',f')$.
Since $(\gamma^{B}(e_w))_u=\Gamma^B_{uw}$ for $u\in B$ and
$(\gamma^{B}(e_w))_w=1$, we obtain
\begin{equation}\label{eq:step1}
g_r(B,f)=\sum_{w\in f^{-1}(r)}\Big(\sum_{u\in B}
\Gamma^B_{uw}\det\big(N'_w[B'\setminus\{u\}]\big)
+\det\big(N'_w[B'\setminus\{w\}]\big)\Big),
\end{equation}
where $N'_w:=N(B',M_{f'})$.

To transfer these cofactors back to $B$, fix $w$. The root blocks of $(B',f')$ are $D_u$ for $u\in B\setminus\{r\}$,
$D_r\setminus D_w$ for $r$, and $D_w\setminus\{w\}$ for $w$. Hence the
fold of $M_{f'}$ into $r$ is $M_f$. Moreover, the column $w$ of
$M_{f'}$ is the indicator of $D_w\setminus\{w\}$, so $M_{f'\cdot w}+e_w$
is the indicator $\one_{D_w}$ of $D_w$. Since $k\ge2$, we have
$|B|\le n-2$. Both $B$ and $B'$ are proper bases of $f$, so the
required minors are nonzero by Assumption~\ref{ass:hyp}.
We can therefore apply Lemma~\ref{lem:transfer} with $U:=B$,
$U'=B'$ and $M':=M_{f'}$ to express the inner sum of
\eqref{eq:step1} as $\sum_{u\in B}(\Gamma^B\one_{D_w})_u d_u$.

It remains to sum these contributions over the children of $r$.
By Lemma~\ref{lem:partition}, their blocks $D_w$ partition $D_r$, and
$\Gamma^B\one_{D_r}=(\Gamma^BM_f)_{\cdot r}=e_r-N_{\cdot r}$. Summing over
$w\in f^{-1}(r)$, we therefore have
\[
g_r(B,f)=\sum_{u\in B}(e_r-N_{\cdot r})_ud_u=d_r-(d^{\top}N)_r=d_r ,
\]
where the last equality holds since $d^{\top}N=\zero^{\top}$.

\emph{Step 2: a single root.} Let $B=\{r\}$. The only principal
cofactor of $N(B,M_f)$ is the empty determinant $1$, so we must show
that the order weights sum to one. If $k=1$, the unique growth order
has the empty product $1$.

For $k\ge2$, fix a child $w$ of $r$, put $B':=\{r,w\}$, and let
$f'$ be the restriction of $f$ to $B^c\setminus\{w\}$. As in Step~1,
$(B',f')$ satisfies Assumption~\ref{ass:hyp}, and deleting the first
vertex from orders starting at $w$ gives precisely its growth orders.
Applying the result for two roots to their order
masses, the sum of $\omega_f(\tau)$ over orders starting at $w$ is
the left side of \eqref{eq:transfer} with $U:=\{r\}$ and $M':=M_{f'}$.
The fold is again $M_f$, and $M_{f'\cdot w}+e_w=\one_{D_w}$.
Since $k\ge2$, the size condition $|U|\le n-2$ holds, and both
$U$ and $U'$ are bases of $f$, so Lemma~\ref{lem:transfer} applies.
Its right side is $\Gamma^{\{r\}}\one_{D_w}$, because $d_r=1$.

The blocks of the children of
$r$ partition $D_r=B^c$ by Lemma~\ref{lem:partition}. Summing over $w$ and
using $\Gamma^{\{r\}}\one=1$, we conclude that
\[
\sum_{\tau\in\Orders_{\{r\}}(f)}\omega_f(\tau)=\Gamma^{\{r\}}\one=1 .
\]
This completes the proof.
\end{proof}

\section{Consequences for the covering walk}\label{sec:consequences}

We return to the Markov chain and take $L$ as in \eqref{eq:Ldef}.
Throughout this section, $P$ is irreducible and stochastic, $\pi$ is
its stationary distribution, and $\lambda$ is the cofactor vector
\eqref{eq:lam}. We first derive the tree law and the conditional law
of its discovery order, then consider the forest obtained by stopping
when all nonroot vertices have been visited. Individual transition
probabilities may be zero. Supplementary Example~\ref{ex:oneway} illustrates these
laws on a chain with one-way support.

\subsection{The Aldous-Broder theorem}

The single-root identity evaluates the sum in
\eqref{eq:unitmass}. Substituting it into the order decomposition gives
the Aldous-Broder theorem in the generality of
\cite{HuLyonsTang2021}.

\begin{theorem}[Aldous-Broder for general chains {\cite[Thm.~3.4]{HuLyonsTang2021}}]\label{thm:ab}
Let $r\in V$ and let $F$ be the first-entrance tree of the covering
walk started at $r$. For every $F_0\in\Fset_r$ with $\{r\}$-parent map $f$,
\[
\pr(F=F_0)=\frac{w(F_0)}{(\one^{\top}\lambda)\prod_{v\in V}\pi_v}
=\frac{w(F_0)}{\sum_{F'\in\Fset_r}w(F')},
\qquad w(F_0)=\prod_{v\ne r}\pi_{f(v)}P_{f(v)v} .
\]
\end{theorem}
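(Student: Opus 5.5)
We combine the single-root order decomposition \eqref{eq:orderlaw1} of Proposition~\ref{prop:orderlaw} with the single-root case of Theorem~\ref{thm:cofactor}. Fix $F_0\in\Fset_r$ with parent map $f$. The event $\{F=F_0\}$ is the disjoint union over orderings $\tau$ of $V\setminus\{r\}$ of $\{F=F_0,\tau(X)=\tau\}$. By Proposition~\ref{prop:orderlaw}, only growth orders $\tau\in\Orders_{\{r\}}(f)$ contribute, each with probability $w(F_0)\omega_f(\tau)/((\one^{\top}\lambda)\prod_v\pi_v)$. Summing gives
\[
\pr(F=F_0)=\frac{w(F_0)}{(\one^{\top}\lambda)\prod_{v\in V}\pi_v}\sum_{\tau\in\Orders_{\{r\}}(f)}\omega_f(\tau).
\]
To evaluate the last sum, we check that $L$ from \eqref{eq:Ldef} satisfies Assumption~\ref{ass:hyp} for $B=\{r\}$ and this $f$. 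Here $L\one=\zero$, and every base $B_\ell$ with $\ell\le k-1$ is a nonempty proper subset of $V$, so $\det L[B_\ell]>0$ by Lemma~\ref{lem:admissible} (cf.\ Remark~\ref{rem:hyp}). Theorem~\ref{thm:cofactor} with $B=\{r\}$ then gives the sum $1$, which yields the first equality. This holds for every $F_0$, including those of zero weight, where both sides vanish.

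For the second equality we find the normalizing constant by summing the first formula over $\Fset_r$. The first-entrance tree lies almost surely in $\Fset_r$, since the hitting times are a.s.\ finite and Lemma~\ref{lem:forest-bijection} applies. Hence
\[
1=\sum_{F_0\in\Fset_r}\pr(F=F_0)=\frac{\sum_{F'\in\Fset_r}w(F')}{(\one^{\top}\lambda)\prod_v\pi_v}.
\]
So $(\one^{\top}\lambda)\prod_v\pi_v=\sum_{F'\in\Fset_r}w(F')$, and substituting this gives the second expression. This route avoids any independent appeal to the matrix-tree theorem.

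The real difficulty is already settled by Theorem~\ref{thm:cofactor} and Proposition~\ref{prop:orderlaw}. What remains is bookkeeping: confirming that the nonsingularity hypothesis holds for all bases along all growth orders, which is exactly Lemma~\ref{lem:admissible}, and that the ordering decomposition exhausts the event $\{F=F_0\}$ with probability-zero contributions from non-growth orders.
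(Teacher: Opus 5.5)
Your proposal is correct and matches the paper's proof: you sum \eqref{eq:orderlaw1} over $\Orders_{\{r\}}(f)$, use Lemma~\ref{lem:admissible} to verify Assumption~\ref{ass:hyp} so that the single-root case of Theorem~\ref{thm:cofactor} gives total order weight $1$, and then identify the normalizing constant by summing the law over $\Fset_r$. Your additional remarks only make explicit what the paper leaves implicit: zero-weight trees, $F\in\Fset_r$ almost surely, and that each base $B_\ell$ with $\ell\le k-1$ is a nonempty proper subset of $V$.
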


\begin{proof}
By Lemma~\ref{lem:admissible}, Assumption~\ref{ass:hyp} holds for every
arborescence. Summing \eqref{eq:orderlaw1} over
$\tau\in\Orders_{\{r\}}(f)$ and applying the single-root case of
Theorem~\ref{thm:cofactor}, we obtain the first form. Since the probabilities
sum to one over $F_0\in\Fset_r$, their common denominator equals the
sum of the numerators, which gives the second form.
\end{proof}

The normalizing constant also yields the arborescence form of the
Markov chain tree theorem for $P^{*}$ by summing over $F_0$.
See \cite{KruckmanGreenwaldWicks2010} for a direct linear-algebra proof
of that theorem. Supplementary Example~\ref{ex:ab} evaluates the tree law on the four-state
chain.

\subsection{The order in which the tree is discovered}

Having obtained the tree law, we can determine how a fixed tree is
discovered. For a tree of positive weight, dividing its joint
probability with a discovery order by its marginal probability
cancels the tree weight and the normalizing constant. The remaining
factor $\omega_f(\tau)$ therefore gives the conditional probability
of the order.

\begin{theorem}[Discovery order given the tree]\label{thm:orderlaw}
Let $r\in V$, let $F$ and $\tau(X)$ be the first-entrance tree and
discovery order of the covering walk started at $r$, and let
$F_0\in\Fset_r$ have parent map $f$ and positive weight $w(F_0)>0$. Then
\[
\pr\big(\tau(X)=\tau\mid F=F_0\big)=\omega_f(\tau)
\qquad(\tau\in\Orders_{\{r\}}(f)),
\]
and $\pr(\tau(X)=\tau\mid F=F_0)=0$ for every other ordering $\tau$.
\end{theorem}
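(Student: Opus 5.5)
The plan is to compute the conditional probability by dividing the joint law in \eqref{eq:orderlaw1} by the marginal law from Theorem~\ref{thm:ab}.

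\emph{Non-growth orders.} If $\tau$ is not a growth order of $f$, Proposition~\ref{prop:orderlaw} gives $\pr(F=F_0,\tau(X)=\tau)=0$. Since $\pr(F=F_0)>0$, the conditional probability is zero.

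\emph{Positivity of the conditioning event.} By Theorem~\ref{thm:ab},
\[
\pr(F=F_0)=\frac{w(F_0)}{(\one^{\top}\lambda)\prod_{v\in V}\pi_v}.
\]
Here $w(F_0)>0$ by hypothesis, $\pi_v>0$ for all $v$, and $\one^{\top}\lambda>0$ by Lemma~\ref{lem:stationary}. Hence $\pr(F=F_0)>0$, and conditioning on $\{F=F_0\}$ is legitimate.

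\emph{Growth orders.} For $\tau\in\Orders_{\{r\}}(f)$, Assumption~\ref{ass:hyp} holds by Lemma~\ref{lem:admissible}, so $\omega_f(\tau)$ is defined. Formula \eqref{eq:orderlaw1} then gives
\[
\pr(F=F_0,\tau(X)=\tau)=\frac{w(F_0)\,\omega_f(\tau)}{(\one^{\top}\lambda)\prod_{v\in V}\pi_v}.
\]
The prefactor here is identical to the expression for $\pr(F=F_0)$ above. Dividing, the tree weight and the normalizing constant cancel, leaving $\omega_f(\tau)$.

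\emph{Consistency check.} The single-root case of Theorem~\ref{thm:cofactor} gives $\sum_{\tau\in\Orders_{\{r\}}(f)}\omega_f(\tau)=1$, so these values form a probability distribution on $\Orders_{\{r\}}(f)$. This is automatic, but it confirms that nothing was lost.

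\emph{Main obstacle.} There is essentially none. The only point needing care is the positivity of $\pr(F=F_0)$, which is exactly where the hypothesis $w(F_0)>0$ is used.
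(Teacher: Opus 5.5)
Your proposal is correct and follows the paper's proof: you get $\pr(F=F_0)>0$ from the first form of Theorem~\ref{thm:ab} together with $w(F_0)>0$, divide \eqref{eq:orderlaw1} by that marginal so the tree weight and the normalizing constant cancel, and handle orderings that are not growth orders with Proposition~\ref{prop:orderlaw}. Your consistency check via the single-root case of Theorem~\ref{thm:cofactor} is not needed for the argument, but it does no harm.
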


\begin{proof}
By Theorem~\ref{thm:ab}, we have $\pr(F=F_0)>0$. Dividing \eqref{eq:orderlaw1}
by the first form of Theorem~\ref{thm:ab} cancels the tree weight and the
constant. For the orderings that are not growth orders, the claim
follows from Proposition~\ref{prop:orderlaw}.
\end{proof}

\begin{example}[A nonuniform law]\label{rem:nonuniform}
The law need not be uniform on the growth orders. For example, for
simple random walk on the complete graph, the conditional probability
of $\tau$ is the summand of \eqref{eq:Kn} divided by $(n-1)!$, since the
matrix $L$ is then a multiple of the Laplacian of the complete graph,
and harmonic extensions are unchanged by scaling.
\end{example}

\subsection{The stopped covering walk}

With several roots, the order masses need not equal one. To obtain
the stopped-forest law, we relate it to the full tree law. Almost
surely, $F_B$ is the restriction of the full first-entrance tree $F$
of Theorem~\ref{thm:ab} to the edges into $B^c$, since those edges
are determined by time $T_B$. For $F_0\in\Fset_B$, each nonroot
already has its unique incoming edge in $F_0$. Hence $F_B=F_0$ if and
only if $F\supseteq F_0$, and we can obtain its probability by
summing over the full trees containing $F_0$.

To evaluate this sum, we separate the weights of the added edges
from the weight of the fixed forest.

\needspace{11\baselineskip}
\begin{definition}[Completion and completion weight]\label{def:completion}
Let $B\subsetneq V$ be nonempty, $r\in B$, and $F_0\in\Fset_B$.
A \emph{completion} of $F_0$ at $r$ is an arborescence
$F'\in\Fset_r$ containing $F_0$. It adds exactly one edge into each
vertex of $B\setminus\{r\}$ and no other edges. The \emph{completion
weight} of $F_0$ at $r$ is
\begin{equation}\label{eq:phi-def}
\varphi_r(F_0):=\sum_{F'\in\Fset_r:\ F'\supseteq F_0}\
\prod_{u\in B\setminus\{r\}}\pi_{f_{F'}(u)}P_{f_{F'}(u)\,u},
\end{equation}
where $f_{F'}$ is the parent map of $F'$.
\end{definition}

Since the edges of $F_0$ and
the added edges are disjoint,
$\sum_{F'\in\Fset_r:\ F'\supseteq F_0}w(F')=w(F_0)\,\varphi_r(F_0)$.
The following theorem evaluates this completion weight through the
order masses of $F_0$.

\begin{theorem}[Law of the stopped covering walk]\label{thm:stopped}
Let $B\subsetneq V$ be nonempty with seed $r\in B$, and let $F_B$ be
the first-entrance forest of the stopped covering walk. For every
$F_0\in\Fset_B$ with parent map $f$,
\begin{equation}\label{eq:closed}
\pr(F_B=F_0)=\frac{\sum_{F'\in\Fset_r:\ F'\supseteq F_0}w(F')}{\sum_{F'\in\Fset_r}w(F')}=\frac{w(F_0)\,\varphi_r(F_0)}{(\one^{\top}\lambda)\prod_{v\in V}\pi_v},
\end{equation}
and the completion weight is
\begin{equation}\label{eq:phi}
\varphi_r(F_0)=\det L[B]\cdot\prod_{u\in B\setminus\{r\}}\pi_u\cdot
g(B,f)^{\top}L[B]^{-1}e_r .
\end{equation}
\end{theorem}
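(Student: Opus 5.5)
We prove the two displayed equalities separately. For \eqref{eq:closed}, we use the observation preceding Definition~\ref{def:completion}: almost surely $F_B$ is the restriction of the full first-entrance tree $F$ to the edges into $B^c$, so $\{F_B=F_0\}=\{F\supseteq F_0\}$. Summing Theorem~\ref{thm:ab} over $F'\in\Fset_r$ with $F'\supseteq F_0$ gives the first form. Using $\sum_{F'\supseteq F_0}w(F')=w(F_0)\varphi_r(F_0)$ and the first form of Theorem~\ref{thm:ab} then gives the second form. This part is routine.

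For \eqref{eq:phi}, we compute $\pr(F_B=F_0)$ a second way and compare. We sum Proposition~\ref{prop:orderlaw} over $\tau\in\Orders_B(f)$, grouping the orders by the root $u:=f(\tau_1)\in B$. Since Lemma~\ref{lem:admissible} guarantees Assumption~\ref{ass:hyp}, Definition~\ref{def:ordermass} gives
\[
\pr(F_B=F_0)=\frac{\det L[B]\,w(F_0)}{\pi_r\big(\prod_{v\in B^c}\pi_v\big)\one^{\top}\lambda}\sum_{u\in B}g_u(B,f)\big(L[B]^{-1}\big)_{u,r},
\]
and the sum is exactly $g(B,f)^{\top}L[B]^{-1}e_r$. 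We equate this with the second form of \eqref{eq:closed} and cancel $w(F_0)$ and $\one^{\top}\lambda$. Then
\[
\varphi_r(F_0)=\det L[B]\cdot\frac{\prod_{v\in V}\pi_v}{\pi_r\prod_{v\in B^c}\pi_v}\,g(B,f)^{\top}L[B]^{-1}e_r .
\]
The prefactor reduces to $\prod_{u\in B\setminus\{r\}}\pi_u$, which is \eqref{eq:phi}.

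The main obstacle is that this cancellation requires $w(F_0)>0$. When $w(F_0)=0$, both sides of \eqref{eq:closed} vanish and the comparison gives no information about $\varphi_r(F_0)$. To handle this case, we note that both sides of \eqref{eq:phi} depend only on $P$ restricted to edges away from $F_0$, together with $\pi$. Here $\pi$ enters $L$ through $P^*$, and $g$ depends on $F_0$ only through $M_f$ and $\Gamma^B$ by Theorem~\ref{thm:cofactor}. We perturb $P$ to $P_\epsilon=(1-\epsilon)P+\epsilon J/n$, where $J$ is the all-ones matrix. This chain is irreducible, and every forest has positive weight under it. The identity holds for each $\epsilon>0$. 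Both sides are continuous in $\epsilon$: $\pi_\epsilon$ is continuous by Lemma~\ref{lem:stationary}, and $L[B]^{-1}$ and $\Gamma^B$ are continuous since the relevant minors stay positive by Lemma~\ref{lem:admissible}. Letting $\epsilon\to0$ therefore gives \eqref{eq:phi} in general.

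Alternatively, \eqref{eq:phi} can be viewed as an algebraic identity in the entries $(\pi,P)$ that holds on a dense set, and hence everywhere by continuity. We would present the continuity argument.
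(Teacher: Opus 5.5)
Your proposal is correct and follows the paper's proof essentially step for step: \eqref{eq:closed} by summing Theorem~\ref{thm:ab} over the completions, \eqref{eq:phi} by summing Proposition~\ref{prop:orderlaw} over growth orders grouped by $f(\tau_1)$ and canceling $w(F_0)$ when it is positive, and the zero-weight case by the same perturbation $P_\varepsilon$ together with continuity. Your side remark that both sides depend only on edges away from $F_0$ is not needed for the argument and can be dropped.
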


\begin{proof}
We first prove \eqref{eq:closed}. Since $F_B$ is the restriction of
$F$, summing Theorem~\ref{thm:ab} over the completions of $F_0$ gives
\eqref{eq:closed}.

To evaluate the completion weight, we compute the same probability
from the order decomposition. Summing \eqref{eq:orderlaw} over
$\tau\in\Orders_B(f)$ and grouping by $u:=f(\tau_1)$ gives, by
\eqref{eq:g}, the factor
$\sum_{u\in B}g_u(B,f)(L[B]^{-1})_{ur}=g(B,f)^{\top}L[B]^{-1}e_r$, so
that
\[
\pr(F_B=F_0)=\frac{\det L[B]\; w(F_0)\; g(B,f)^{\top}L[B]^{-1}e_r}
{\pi_r\big(\prod_{v\in B^c}\pi_v\big)\one^{\top}\lambda} .
\]
Comparing with \eqref{eq:closed} and canceling $w(F_0)$ gives
\eqref{eq:phi} when $w(F_0)>0$.

To extend the identity to forests of zero weight, we replace $P$ by $P_\varepsilon:=(1-\varepsilon)P+\varepsilon\one\one^{\top}/n$,
$0<\varepsilon<1$, for which every forest has positive weight. By
Lemma~\ref{lem:stationary}, the stationary distribution of $P_\varepsilon$
is a continuous function of the principal cofactors of
$I-P_\varepsilon$ and converges to $\pi$. Moreover, the minors entering
the harmonic extensions and $L[B]^{-1}$ remain nonzero at the limit
by Lemma~\ref{lem:admissible}. Therefore, both sides of \eqref{eq:phi} are
continuous in $\varepsilon$, and letting $\varepsilon\downarrow0$ proves
\eqref{eq:phi} for every forest. This completes the proof.
\end{proof}

For $B=\{r\}$, the only completion is $F_0$ itself with no added
edge, so $\varphi_r(F_0)=1$ and \eqref{eq:closed} reduces to
Theorem~\ref{thm:ab}. With several roots, the completion weight can
vary between forests. Its dependence on $F_0$ is through the root
indicator $M_f$, as we can see by expressing \eqref{eq:phi} as a
single cofactor.

Since \mbox{$L[B]+L[B,B^c]M_f=L[B]\,(I-\Gamma^BM_f)$} and
$\adj(L[B])=\det(L[B])\,L[B]^{-1}$, the product rule
$\adj(XY)=\adj(Y)\adj(X)$ for adjugates and
Lemma~\ref{lem:adj}(a) give
\begin{equation}\label{eq:phiN}
\varphi_r(F_0)=\Big(\prod_{u\in B\setminus\{r\}}\pi_u\Big)
\det\big((L[B]+L[B,B^c]M_f)[B\setminus\{r\}]\big),
\end{equation}
Thus the completion weight can be evaluated from a determinant of
size $|B|-1$. Supplementary Section~\ref{sec:sm-completion} gives an alternative
derivation by contraction and the computational cost. We tabulate
the resulting law for the four-state example in
Supplementary Section~\ref{sec:app-examples}.

Conditioning the joint order decomposition on the stopped forest
gives its discovery-order law. Unlike the single-root case, the law
also depends on the seed through a column of $L[B]^{-1}$.

\begin{corollary}[Discovery order given the stopped forest]\label{cor:stoppedorder}
Under the assumptions of Theorem~\ref{thm:stopped}, let $F_0\in\Fset_B$ have
parent map $f$ and $\pr(F_B=F_0)>0$. Then, for every
$\tau\in\Orders_B(f)$,
\[
\pr\big(\tau(X)=\tau\mid F_B=F_0\big)
=\frac{\big(L[B]^{-1}\big)_{f(\tau_1),\,r}\,\omega_f(\tau)}
{\sum_{u\in B}g_u(B,f)\big(L[B]^{-1}\big)_{ur}},
\]
and $\pr(\tau(X)=\tau\mid F_B=F_0)=0$ for every other ordering $\tau$ of
$B^c$. In particular, the growth orders entering at $u\in B$ carry the
conditional mass
\[
\frac{g_u(B,f)\big(L[B]^{-1}\big)_{ur}}
{\sum_{u'\in B}g_{u'}(B,f)\big(L[B]^{-1}\big)_{u'r}} .
\]
\end{corollary}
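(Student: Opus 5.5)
The plan is to divide the joint law of Proposition~\ref{prop:orderlaw} by the marginal law of the stopped forest. For $F_0\in\Fset_B$ with parent map $f$ and any $\tau\in\Orders_B(f)$, equation \eqref{eq:orderlaw} gives $\pr(F_B=F_0,\tau(X)=\tau)$ as the product of three pieces: the common constant $c:=\det L[B]/\big(\pi_r(\prod_{v\in B^c}\pi_v)\one^{\top}\lambda\big)$, the forest weight $w(F_0)$, and the order-dependent factor $(L[B]^{-1})_{f(\tau_1),r}\,\omega_f(\tau)$. The same proposition gives probability zero for every ordering of $B^c$ that is not a growth order of $f$. This already settles the second claim, provided we know $\pr(F_B=F_0)>0$, which is the stated hypothesis.

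For the marginal, we sum \eqref{eq:orderlaw} over $\tau\in\Orders_B(f)$, exactly as in the proof of Theorem~\ref{thm:stopped}. We group the orders by $u:=f(\tau_1)\in B$ and use Definition~\ref{def:ordermass}, which gives
\[
\pr(F_B=F_0)=c\,w(F_0)\sum_{u\in B}g_u(B,f)\big(L[B]^{-1}\big)_{ur}.
\]
Since this is positive, both $w(F_0)>0$ and the sum are nonzero, so we may divide. The constant $c$ and the factor $w(F_0)$ cancel, which yields the displayed formula for $\pr(\tau(X)=\tau\mid F_B=F_0)$.

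For the ``in particular'' statement, we sum the conditional probabilities over the growth orders with $f(\tau_1)=u$. On these orders the factor $(L[B]^{-1})_{ur}$ is constant, and the remaining sum of $\omega_f(\tau)$ is $g_u(B,f)$ by \eqref{eq:g}, giving the stated mass.

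No step is a genuine obstacle. The only point needing care is the justification for dividing, namely that the denominator is nonzero and that the order weights are well defined. Assumption~\ref{ass:hyp} holds for the matrix \eqref{eq:Ldef} by Lemma~\ref{lem:admissible}, so the order weights are well defined, and positivity of the denominator follows from the positivity hypothesis through the displayed marginal. Theorem~\ref{thm:cofactor} is not needed here; it enters only if one wants to rewrite $g_u$ as cofactors.
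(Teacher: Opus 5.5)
Your proposal is correct and follows the paper's proof: you divide the joint law \eqref{eq:orderlaw} by the marginal from the proof of Theorem~\ref{thm:stopped}, which groups growth orders by $f(\tau_1)$, then cancel the constant and $w(F_0)$, and use Proposition~\ref{prop:orderlaw} for the orderings that are not growth orders. Your explicit summation for the ``in particular'' mass is accurate, and so is your remark that Theorem~\ref{thm:cofactor} is not needed.
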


\begin{proof}
Dividing \eqref{eq:orderlaw} by the expression of $\pr(F_B=F_0)$ in the
proof of Theorem~\ref{thm:stopped} cancels the forest weight and the constant. The denominator is nonzero since $\pr(F_B=F_0)>0$. For the orderings
that are not growth orders, the claim follows from Proposition~\ref{prop:orderlaw}.
This completes the proof.
\end{proof}

For $B=\{r\}$, the factor $(L[B]^{-1})_{rr}$ cancels, and
Corollary~\ref{cor:stoppedorder} reduces to Theorem~\ref{thm:orderlaw}.

Wilson's algorithm run with $P^{*}$ and the roots $B$ absorbing samples
forests with law proportional to $w(F_B)$
\cite{Wilson1996,ProppWilson1998,AvenaGaudilliere2018forests,Marchal2000}.
The stopped covering walk instead samples with law proportional to
$w(F_B)\varphi_r(F_B)$. Randomizing the seed does not in general
remove this extra factor, as the counterexample in
Supplementary Section~\ref{sec:sm-seed} shows.
\end{remark}

\begin{remark}[When the tilt vanishes]\label{rem:unbiased}
By \eqref{eq:closed}, the walk from the seed $r$ samples
exactly the law proportional to $w(F_B)$ if and only if the completion
weight $\varphi_r(F_0)$ is the same for all forests $F_0$ of positive
weight. In particular, this holds when every arc from $B^c$ into $B$
ends at $r$. To see this, note that in this case every completion of
positive weight attaches the roots in $B\setminus\{r\}$ through arcs
inside $B$, so $\varphi_r(F_0)$ does not depend on $F_0$. For a reversible
chain, this condition means that every path between $B^c$ and
$B\setminus\{r\}$ passes through $r$. For a nonreversible
chain, the condition can hold without this undirected separation, as
Supplementary Example~\ref{ex:oneway} shows.

\end{remark}

When the completion weight varies, we consider inverse weighting.

\begin{remark}[Importance correction]\label{rem:importance}
Write $q_W(F_0)=w(F_0)/Z_W$ for the weight-proportional law we target,
$q_A(F_0)=w(F_0)\varphi_r(F_0)/Z_A$ for the law \eqref{eq:closed} we
sample forests from, and
$S_r:=\{F_0\in\Fset_B:\ w(F_0)>0,\ \varphi_r(F_0)>0\}$ for the support of
$q_A$. Then, for every function $h$ on $\Fset_B$,
\[
\frac{\E_{q_A}\big[h(F_B)/\varphi_r(F_B)\big]}{\E_{q_A}\big[1/\varphi_r(F_B)\big]}
=\E_{q_W}\big[h(F_B)\mid F_B\in S_r\big].
\]
Thus inverse-completion weighting recovers the target expectation
conditional on the reachable forests. It recovers $\E_{q_W}[h]$ for
every function $h$ if and only if $q_W(S_r)=1$. Any law
$\widetilde q$ supported on $S_r$ satisfies
$\TV(\widetilde q,q_W)\ge 1-q_W(S_r)$, so no reweighting can remove
the discrepancy caused by lost target mass.

For independent draws from $q_A$, the self-normalized sample ratio
is consistent for the conditional target. Its precision also depends
on the variability of the inverse completion weights.
Section~\ref{sec:numerics-main} illustrates both support loss and the
effect of weighting on estimation.
\end{remark}

\section{Numerical illustration}\label{sec:numerics-main}

We illustrate the difference between the stopped-walk and weight-proportional
forest laws by enumerating forests on seven vertices. We use $30$ chains
from each of three families: dense chains, random walks on connected
undirected graphs, and directed cycles with drift. The root sets are
$B=\{1,\ldots,b\}$ for $b=1,\ldots,6$, with seed $1$.
The drift chains share one forest law and are displayed once.
Supplementary Section~\ref{sec:numerics} gives the construction,
sensitivity analyses, and computational checks.

\begin{figure}[!htbp]
\centering
\includegraphics[width=0.85\textwidth]{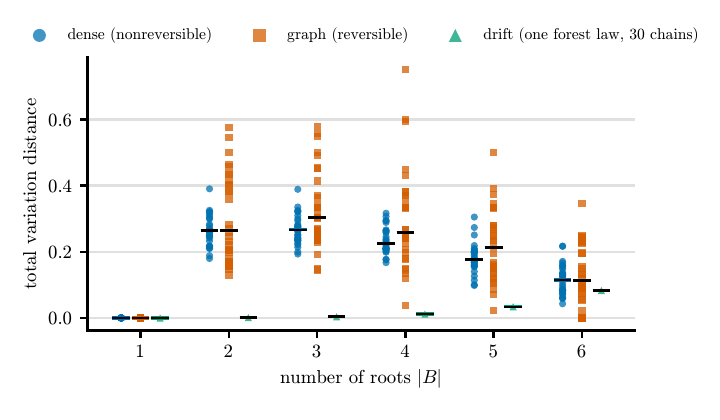}
\caption{Total variation distance between the stopped-walk forest law
with seed $1$ and the weight-proportional law, by root count and chain
family. Points represent chains, with the common drift law shown once. Horizontal bars mark the medians.}
\label{fig:bias}
\end{figure}

Figure~\ref{fig:bias} shows the total variation distance between the two
laws. They agree for a single root. With two roots, the median distance
is about $0.26$ for both the dense and graph families. The largest
distance is $0.75$, attained by a graph chain with four roots, and is
entirely due to unreachable target mass. Thus substantial discrepancies
can occur even for reversible chains, and reweighting cannot always
remove them.

For an example with complete support, consider the first dense chain
with $B=\{1,2\}$ and the size of the seed's tree. Its expectation is
$3.237$ under the target law and $4.359$ under the stopped-walk law.
The self-normalized correction converges to $3.237$. For the graph
chain with the largest lost mass at two roots, the target expectation
is $1.923$, but the correction converges to the conditional expectation
$3.182$. These small examples illustrate the support condition in
Remark~\ref{rem:importance} rather than performance on large graphs.

\section{Discussion}\label{sec:discussion}

The order-mass identity evaluates the sum over growth orders of a
fixed forest as a cofactor on its roots. This connects the direct
calculation of the Aldous-Broder law with the evaluation of
completion weights for the stopped covering walk. The resulting
forest and conditional order laws suggest several further questions.

First, Theorem~\ref{thm:orderlaw} gives the probability of each growth
order of a fixed tree, but its marginals at later positions and the
cost of sampling from this conditional law remain to be analyzed.
Supplementary Corollary~\ref{cor:PL} gives a weighted count of linear extensions
for one choice of $L$. Other choices may yield further weighted
counts with a combinatorial interpretation.

Second, a bijective proof of Theorem~\ref{thm:cofactor} could explain
the equality between the total weights of growth orders and the
forest sums in Supplementary Remark~\ref{rem:forest}. The denominators
$\prod_\ell\det L[B_{\ell-1}]$ depend on the entire order, so a
weight-preserving correspondence would have to account for the
changing bases. Connections with loop erasure, such as those in
\cite{Fomin2001}, may be useful for this question.

Third, the stopped-walk law separates the question of recovering
every target expectation from that of estimating it accurately.
Complete target support is needed for the former, while variability of
the importance weights affects the latter. The numerical examples
illustrate support loss and weight variability separately. It remains
to develop walk-based forest samplers that address both issues and
to determine their computational cost on graphs used in clustering
and signal processing.

The order-mass identity and the tree and stopped-forest laws are formalized
in Lean~4 at \url{https://github.com/YuZh98/aldous-broder-lean}.
The conditional-order laws, importance-correction results, and numerical
experiments are not formalized. Supplementary Section~\ref{sec:sm-lean}
gives the correspondence between the paper and the formal statements.

\section*{Acknowledgments}
We thank Dr. Leo Duan for helpful discussions about this project.

\section*{Declarations}
Generative-AI tools assisted with manuscript revision and coding.
The author assumes responsibility for all content.

\appendix
\section{Proofs of the auxiliary lemmas}\label{sec:app-proofs}

We give the deferred proofs of Lemmas~\ref{lem:adj},
\ref{lem:partition} and~\ref{lem:admissible}.

\begin{proof}[Proof of Lemma~\ref{lem:adj}]
(a) Since $A\one=\zero$, we have $\det A=0$, and thus $A\adj(A)=\zero$
by \eqref{eq:adj}. Therefore, every column of $\adj(A)$ lies in $\ker A$.
If $\operatorname{rank}A=|S|-1$, then $\ker A=\R\one$, and each column
is constant. If $\operatorname{rank}A\le |S|-2$, then every minor of
order $|S|-1$ vanishes, and $\adj(A)=\zero$. In both cases, the entry
$(v,u)$ of $\adj(A)$ equals its diagonal entry $d_u$, so
$\adj(A)=\one d^{\top}$, and $\adj(A)A=\zero$ gives
$d^{\top}A=\zero^{\top}$.

(b) We expand the determinant by linearity in each column, choosing
$A_{\cdot v}$ or $c_va$. The terms choosing $c_va$ in two columns
vanish. By \eqref{eq:adj} and part~(a), we have
\[
\det(A+ac^{\top})=\det A+\sum_v c_v\sum_u a_u\adj(A)_{vu}
=\sum_vc_v\sum_ua_ud_u=(c^{\top}\one)(a^{\top}d).
\]
This completes the proof.
\end{proof}

\begin{proof}[Proof of Lemma~\ref{lem:partition}]
We fix $v\in B^c$ and let $q_0\ge1$ be the first index with
$f^{(q_0)}(v)\in B$. We first prove the claim that the iterates
$f^{(q)}(v)$, $0\le q\le q_0$, are pairwise distinct. Suppose that
$f^{(q)}(v)=f^{(q')}(v)$ with $q<q'\le q_0$. Since an element of $B^c$
cannot equal one of $B$, both indices are below $q_0$. The iterates of
$f^{(q)}(v)$ are then periodic inside $B^c$ and never reach $B$, which
contradicts the definition of a $B$-parent map.

It follows that $v\in D_u$ exactly for $u=f^{(q)}(v)$, $0\le q\le q_0$.
Hence $v$ lies in the block of exactly one root, namely
$f^{(q_0)}(v)$, which proves the first statement. For the second, we
have $v\in D_u\setminus\{u\}$ if and only if $f^{(q)}(v)=u$ for some
$q\ge1$, that is, $v\in D_{w}$ for the child $w:=f^{(q-1)}(v)$ of $u$.
This child is unique by distinctness. Moreover, $u\notin D_{w}$ for a
child $w$ of $u$, since otherwise the iterates from $u$ would return to
$u$. This completes the proof.
\end{proof}

\begin{proof}[Proof of Lemma~\ref{lem:admissible}]
We first note that $(L\one)_u=1-\pi_u^{-1}(P^{\top}\pi)_u=0$. Since
$L[U]=\Pi[U]^{-1}\big((I-P)[U]\big)^{\top}\Pi[U]$, the two matrices have
the same determinant, and
\[
\big((I-P)[U]\big)^{-1}=\Pi[U]^{-1}\big(L[U]^{-1}\big)^{\top}\Pi[U],
\]
whose $(u,v)$ entry is $(\pi_v/\pi_u)(L[U]^{-1})_{vu}$.

Next, we prove \eqref{eq:green} and the positivity of the
determinant. Fix $u\in U$ and $w\in U^c$. By irreducibility, there is a path
of pairwise distinct states from $u$ to $w$ with positive transition
probabilities, and it leaves $U$ within at most $n-1$ steps. Hence the
walk started at $u$ leaves $U$ within $n-1$ steps with positive
probability, so every row of $P[U]^{n-1}$ sums to less than one, and the substochastic matrix $P[U]$ has spectral radius less than
$1$. Therefore, the Neumann series converges to
$\big((I-P)[U]\big)^{-1}$, which, together with the reversal, proves
\eqref{eq:green}. Moreover, the
eigenvalues $\eta$ of $P[U]$, lying in the open unit disk and closed
under conjugation, give
$\det\big((I-P)[U]\big)=\prod_\eta(1-\eta)>0$, the product over the
eigenvalues with multiplicity.

Finally, the first identity in \eqref{eq:ratio} is Cramer's rule. For
the second, the column $v$ of $L[U]^{-1}$, extended by zero off $U$ and
divided by its value at $v$, is $1$ at $v$, vanishes off $U$, and is
annihilated by the rows $U\setminus\{v\}$ of $L$, because
$L[U]\,(L[U]^{-1})_{\cdot v}=e_v$. By uniqueness, it equals
$\gamma^{U\setminus\{v\}}(e_v)$. This completes the proof.
\end{proof}

\clearpage
\section*{Supplementary materials}
\addcontentsline{toc}{section}{Supplementary materials}
\setcounter{section}{0}
\renewcommand{\thesection}{SM\arabic{section}}
\renewcommand{\theHsection}{SM\arabic{section}}
\setcounter{figure}{0}
\setcounter{table}{0}
\renewcommand{\thefigure}{SM\arabic{figure}}
\renewcommand{\thetable}{SM\arabic{table}}
\renewcommand{\theHfigure}{SM\arabic{figure}}
\renewcommand{\theHtable}{SM\arabic{table}}
These supplementary materials contain examples and secondary consequences
of the order-mass identity, details of the Lean formalization, and the
full numerical study. References to results without an SM prefix refer
to the main article above.

\section{Worked examples}\label{sec:app-examples}

We illustrate the results on three small examples. A four-state chain makes the harmonic
extensions, order masses and completion weights explicit. A chain with
one-way transitions shows the roles of reversal and support. A matrix
with entries of both signs checks the identity outside the
probabilistic setting, where individual order weights can be negative.

\begin{example}[A four-state chain]\label{ex:running}
Let $V=\{1,2,3,4\}$ and $L:=I-\tfrac15W^{\top}$ with
\[
W=\begin{pmatrix}1&1&1&2\\1&1&2&1\\1&2&1&1\\2&1&1&1\end{pmatrix}.
\]
Every column of $W$ sums to $5$, so $L\one=\zero$, and $L$ is the
matrix \eqref{eq:Ldef} of the doubly stochastic chain $P:=W/5$. We take
$B=\{1,2\}$, so that the minors required by Assumption~\ref{ass:hyp} are among
\[
\det L[\{1,2\}]=\tfrac35,\quad
\det L[\{1,2,3\}]=\det L[\{1,2,4\}]=\tfrac{36}{125}.
\]
The harmonic extension map from $B$ has the columns
\[
\tilde\Gamma^B=\begin{pmatrix}
\gamma^B(e_3) &\gamma^B(e_4)
\end{pmatrix},\quad
\gamma^{B}(e_3)=(\tfrac25,\tfrac35,1,0)^{\top},\quad
\gamma^{B}(e_4)=(\tfrac35,\tfrac25,0,1)^{\top}.
\]
We consider the three $B$-parent maps in Figure~\ref{fig:sm-running},
\[
f_1:\ 3\mapsto1,\ 4\mapsto3;\qquad
f_2:\ 3\mapsto2,\ 4\mapsto3;\qquad
f_3:\ 3\mapsto1,\ 4\mapsto2,
\]
with root blocks $(D_1,D_2)=(\{3,4\},\emptyset)$,
$(\emptyset,\{3,4\})$ and $(\{3\},\{4\})$, respectively.

\begin{figure}[H]
\centering
\begin{tikzpicture}[x=1cm,y=1cm]
\foreach \x/\name in {0/f_1,4.2/f_2,8.4/f_3}{
  \node[pl] at (\x+0.6,0.95) {$\name$};}
\node[rootv] (a1) at (0,0.3) {$1$}; \node[rootv] (a2) at (1.2,0.3) {$2$};
\node[vtx] (a3) at (0,-0.55) {$3$}; \node[vtx] (a4) at (0,-1.4) {$4$};
\draw[ed] (a1)--(a3); \draw[ed] (a3)--(a4);
\node[rootv] (b1) at (4.2,0.3) {$1$}; \node[rootv] (b2) at (5.4,0.3) {$2$};
\node[vtx] (b3) at (5.4,-0.55) {$3$}; \node[vtx] (b4) at (5.4,-1.4) {$4$};
\draw[ed] (b2)--(b3); \draw[ed] (b3)--(b4);
\node[rootv] (c1) at (8.4,0.3) {$1$}; \node[rootv] (c2) at (9.6,0.3) {$2$};
\node[vtx] (c3) at (8.4,-0.55) {$3$}; \node[vtx] (c4) at (9.6,-0.55) {$4$};
\draw[ed] (c1)--(c3); \draw[ed] (c2)--(c4);
\end{tikzpicture}
\caption{The forests $f_1,f_2,f_3$ of Example~\ref{ex:running}, rooted at
$B=\{1,2\}$. Squares mark the roots, and edges point away from them.}
\label{fig:sm-running}
\end{figure}

On the right side of \eqref{eq:cofactor}, the matrices
$I-\Gamma^BM_f$ are
\[
\begin{pmatrix}0&0\\-1&1\end{pmatrix},\qquad
\begin{pmatrix}1&-1\\0&0\end{pmatrix},\qquad
\begin{pmatrix}\tfrac35&-\tfrac35\\[2pt]-\tfrac35&\tfrac35\end{pmatrix},
\]
with the vectors of principal cofactors $(1,0)^\top$, $(0,1)^\top$ and $(\tfrac35,\tfrac35)^\top$,
respectively. On the left side, $f_1$ has the single growth order
$(3,4)$, entering at $1$ with the factor $(\gamma^{B}(e_3))_3=1$, and
$f_2$ likewise enters at $2$. On the other hand, $f_3$ has $(3,4)$
entering at $1$ with the factor $(\gamma^{B}(e_3))_2=\tfrac35$, and
$(4,3)$ entering at $2$ with $(\gamma^{B}(e_4))_1=\tfrac35$. These agree
with the principal cofactors above.

For the single-root case, consider the single root $1$ and the star
$f_\star:2,3,4\mapsto1$. Every ordering of $\{2,3,4\}$ is a growth
order, and the six order weights
\[
\tfrac1{10},\ \tfrac3{20},\ \tfrac1{10},\ \tfrac3{20},\ \tfrac14,\ \tfrac14
\qquad\text{for }(2,3,4),(2,4,3),(3,2,4),(3,4,2),(4,2,3),(4,3,2)
\]
sum to $1$, as the single-root identity requires.
\end{example}

\begin{example}[Example~\ref{ex:running} continued]\label{ex:ab}
The principal cofactors of $I-P$ are $\lambda_v=\tfrac{36}{125}$ for
every $v$, so $\one^{\top}\lambda=\tfrac{144}{125}$. With the single
root $1$, the star $f_\star$ has tree weight
\[
w(F_\star)=\prod_{v\ne1}\pi_{f(v)}P_{f(v)v}=\prod_{v\ne1}\pi_1P_{1v}=\frac{1}{4000},
\]
and hence
\[
\pr(F=F_\star)=\frac{1/4000}{(144/125)(1/256)}=\frac1{18}.
\]
By Theorem~\ref{thm:orderlaw}, the six order weights of
Example~\ref{ex:running} are the conditional probabilities of the six
growth orders given $F=F_\star$. Moreover, the $16$ arborescences
rooted at $1$ have total weight $\tfrac9{2000}$, as the second form of
Theorem~\ref{thm:ab} requires.
\end{example}

\begin{example}[Example~\ref{ex:running} continued]\label{ex:stopped}
We return to the root set $B=\{1,2\}$ of Example~\ref{ex:running}, for
which $\pi$ is uniform and $\det L[B]=\tfrac35$. With the order masses
$(1,0)$, $(0,1)$ and $(\tfrac35,\tfrac35)$ of $f_1,f_2,f_3$,
\eqref{eq:phi} and \eqref{eq:closed} give the completion weights and
probabilities of three of the eight forests rooted at $B$ in
Table~\ref{tab:completion}.

\begin{table}[ht]
    \caption{Completion weights and laws of $F_B$ for the forests
    $f_1,f_2,f_3$ of Figure~\ref{fig:sm-running}. Here $\pr_r$ is the law of $F_B$
    under the seed $r$.}\label{tab:completion}
    \centering
    \small
    \begin{tabular}{@{}lccccc@{}}
        \toprule
        forest $F_0$ & $w(F_0)$ & $\varphi_1(F_0)$ & $\varphi_2(F_0)$ & $\pr_1(F_B=F_0)$ & $\pr_2(F_B=F_0)$ \\
        \midrule
        $f_1$ & $1/400$ & $1/5$ & $1/20$ & $1/9$ & $1/36$ \\
        $f_2$ & $1/200$ & $1/20$ & $1/5$ & $1/18$ & $2/9$ \\
        $f_3$ & $1/400$ & $3/20$ & $3/20$ & $1/12$ & $1/12$ \\
        \bottomrule
    \end{tabular}
\end{table}
\smallskip
The first two forests have weights in ratio $1{:}2$, but their seed-$1$
probabilities are in ratio $2{:}1$. Thus the walk from $1$ favors the
forest whose larger tree is rooted at the seed.
\end{example}

\begin{example}[A chain with one-way support]\label{ex:oneway}
The doubly stochastic chain of Example~\ref{ex:running} is convenient for
computation, but it hides the role of the reversal. To make this role
visible, we consider $V=\{1,2,3,4\}$ and the chain of
Figure~\ref{fig:sm-oneway},
\[
P=\begin{pmatrix}
0&\tfrac12&\tfrac12&0\\
0&0&\tfrac12&\tfrac12\\
\tfrac13&0&0&\tfrac23\\
1&0&0&0
\end{pmatrix},
\qquad
P^{*}=\begin{pmatrix}
0&0&\tfrac14&\tfrac34\\
1&0&0&0\\
\tfrac23&\tfrac13&0&0\\
0&\tfrac13&\tfrac23&0
\end{pmatrix},
\qquad
\]
with the stationary distribution
$\pi=(\tfrac13,\tfrac16,\tfrac14,\tfrac14)^{\top}$.

\begin{figure}[H]
\centering
\begin{tikzpicture}[x=1cm,y=1cm]
\node[vtx] (q1) at (0,0) {$1$};
\node[vtx] (q2) at (2.6,0) {$2$};
\node[vtx] (q3) at (0,-2.2) {$3$};
\node[vtx] (q4) at (2.6,-2.2) {$4$};
\draw[ed] (q1) -- node[lb,above]{$\tfrac12$} (q2);
\draw[ed] (q1) to[bend right=22] node[lb,left]{$\tfrac12$} (q3);
\draw[ed] (q3) to[bend right=22] node[lb,right]{$\tfrac13$} (q1);
\draw[ed] (q2) -- node[lb,right]{$\tfrac12$} (q4);
\draw[ed] (q3) -- node[lb,below]{$\tfrac23$} (q4);
\draw[ed] (q2) -- node[lb,pos=0.17,sloped,above]{$\tfrac12$} (q3);
\draw[ed] (q4) -- node[lb,pos=0.17,sloped,below]{$1$} (q1);
\end{tikzpicture}
\caption{The transitions of the chain of Example~\ref{ex:oneway}. Only the
arc between $1$ and $3$ is used in both directions, whereas every other arc is
one-way.}
\label{fig:sm-oneway}
\end{figure}

The chain is irreducible, since $1\to2\to4\to1$ and $1\to3\to1$ are
cycles through every state. However, nine of the sixteen transitions
vanish, the arcs $1\to2$, $2\to3$, $2\to4$, $3\to4$ and $4\to1$ are
one-way, and $\pi_uP_{uv}\ne\pi_vP_{vu}$ for every pair $u\ne v$. In
particular, $P$ is not positive on the edges of any undirected graph,
so the hypothesis of \cite{FredesMarckert2023} fails. Nevertheless,
the results of Section~\ref{sec:consequences} require only irreducibility,
and we now check each of them. All the numbers below are exact. For
comparison, we computed the laws of the walk independently by
first-step analysis on the pairs (visited set, current state), using
only the forward chain.

\emph{The tree law.} Start the covering walk at $r=1$. Of the $16$
arborescences rooted at $1$, only four have positive weight, and their
total weight is $\tfrac1{96}$. Table~\ref{tab:oneway-trees} lists these four
trees with their probabilities from Theorem~\ref{thm:ab} and the conditional
law of their growth orders from Theorem~\ref{thm:orderlaw}. The other twelve
trees have probability zero.

\begin{table}[ht]
\caption{The four trees of positive weight rooted at $1$ in
Example~\ref{ex:oneway}, written as $(f(2),f(3),f(4))$, with their weights,
their probabilities under the covering walk from $1$, and the order
weights of their growth orders.}\label{tab:oneway-trees}
\centering
\small
\begin{tabular}{@{}lccl@{}}
\toprule
tree & $w(F_0)$ & $\pr(F=F_0)$ & growth orders $\tau$ and order weights $\omega_f(\tau)$\\
\midrule
$(1,1,3)$ & $1/216$ & $4/9$ & $(2,3,4)\colon 0$,\quad $(3,2,4)\colon1/10$,\quad $(3,4,2)\colon9/10$\\
$(1,1,2)$ & $1/432$ & $2/9$ & $(2,3,4)\colon 0$,\quad $(2,4,3)\colon3/4$,\quad $(3,2,4)\colon1/4$\\
$(1,2,3)$ & $1/432$ & $2/9$ & $(2,3,4)\colon 1$\\
$(1,2,2)$ & $1/864$ & $1/9$ & $(2,3,4)\colon1/4$,\quad $(2,4,3)\colon3/4$\\
\bottomrule
\end{tabular}
\end{table}

The probabilities and the order weights both agree with the first-step
analysis. Every tree of positive weight has $f(2)=1$, since the only arc into
$2$ is from $1$. In contrast, the law proportional to
$\prod_{v\ne1}P_{vf(v)}$, which Wilson's algorithm samples when run
with $P$ in place of $P^{*}$, is supported on the
trees with $f(2)\in\{3,4\}$, since $2$ can only move to $3$ or $4$.
The two laws are therefore mutually singular.

\emph{Order weights equal to zero.} The growth order $(2,3,4)$ of the
first two trees has weight zero. Its first factor is
$(\gamma^{\{1\}}(e_2))_1=P^{*}_{12}=0$. Correspondingly, once the walk
has entered $2$, it cannot return to $1$ without visiting $3$ or $4$,
so it never discovers $2$ and then $3$ from $1$. The order weights of
each of the $16$ trees still sum to one, as Theorem~\ref{thm:cofactor}
requires, including the twelve trees of weight zero.

\emph{The stopped covering walk.} Take $B=\{1,2\}$. Then
$L[B]=\big(\begin{smallmatrix}1&0\\-1&1\end{smallmatrix}\big)$, so
$\det L[B]=1$, and \eqref{eq:phi} gives
$\varphi_1(F_0)=\pi_2\big(g_1(B,f)+g_2(B,f)\big)$ and
$\varphi_2(F_0)=\pi_1\,g_2(B,f)$. Table~\ref{tab:oneway-forests} lists the
eight forests rooted at $B$ with their order masses, completion weights
and probabilities under the two seeds. Both probability columns agree
with \eqref{eq:closed} and with the first-step analysis.

\begin{table}[ht]
\caption{The eight forests rooted at $B=\{1,2\}$ in Example~\ref{ex:oneway},
written as $(f(3),f(4))$, with their weights, order masses, completion
weights at the seed $2$, and laws $\pr_r$ of $F_B$ under the seeds
$r=1,2$. The completion weight at the seed $1$ equals $\tfrac16$ for
every forest.}\label{tab:oneway-forests}
\centering
\small
\begin{tabular}{@{}lccccc@{}}
\toprule
forest $F_0$ & $w(F_0)$ & $g(B,f)$ & $\varphi_2(F_0)$ & $\pr_1(F_B=F_0)$ & $\pr_2(F_B=F_0)$\\
\midrule
$(1,3)$ & $1/36$ & $(1,0)$ & $0$ & $4/9$ & $0$\\
$(1,2)$ & $1/72$ & $(1/4,\,3/4)$ & $1/4$ & $2/9$ & $1/3$\\
$(2,3)$ & $1/72$ & $(0,1)$ & $1/3$ & $2/9$ & $4/9$\\
$(2,2)$ & $1/144$ & $(0,1)$ & $1/3$ & $1/9$ & $2/9$\\
$(1,1)$ & $0$ & $(1,0)$ & $0$ & $0$ & $0$\\
$(2,1)$ & $0$ & $(3/4,\,1/4)$ & $1/12$ & $0$ & $0$\\
$(4,1)$ & $0$ & $(1,0)$ & $0$ & $0$ & $0$\\
$(4,2)$ & $0$ & $(0,1)$ & $1/3$ & $0$ & $0$\\
\bottomrule
\end{tabular}
\end{table}

The last four forests have weight zero, since $P_{14}=P_{43}=0$, but their order
masses are well defined and satisfy Theorem~\ref{thm:cofactor}. Indeed,
the identity concerns $L$, not the weight of the forest.

The two seeds behave very differently. Every order mass vector sums to
one here, so $\varphi_1(F_0)=\pi_2=\tfrac16$ for every $F_0$, and hence
$\pr_1(F_B=F_0)\propto w(F_0)$. Thus the walk from $1$ samples exactly
the weight-proportional law. This is an instance of
Remark~\ref{rem:unbiased}, since every completion at $1$ whose added edge has
positive weight adds the edge $1\to2$, the only arc into $2$, so that
$\varphi_1(F_0)=\pi_1P_{12}=\tfrac16$ for every $F_0$.

In contrast, the walk from the seed $2$ is tilted by $\varphi_2$ and
never returns $(1,3)$, the forest of largest weight. Indeed, its first
step enters $3$ or $4$ from $2$, and the completion weight $\varphi_2$
of $(1,3)$ vanishes, since no vertex of the tree of $2$ has an arc into
$1$. Since $(1,3)$ has probability $\tfrac49$ under the
weight-proportional law, the target mass lost from the seed $2$ is
$\tfrac49$. By Remark~\ref{rem:importance}, every reweighting of
$\pr_2$ is therefore at total variation distance at least $\tfrac49$
from that law.

Finally, \eqref{eq:phi} also holds for the forests of weight zero, as
the continuity argument in the proof of Theorem~\ref{thm:stopped}
shows. For example,
the forest $(2,1)$ has a single completion rooted at $2$ whose added
edge has positive weight, the edge $3\to1$, so that
$\varphi_2(F_0)=\pi_3P_{31}=\tfrac1{12}=\pi_1\,g_2(B,f)$, although the
completed tree has weight zero.
\end{example}

\begin{example}[A matrix with entries of both signs]\label{ex:signed}
We now consider a matrix $L$ that no Markov chain produces, so that
some entries of the harmonic extensions are negative and some exceed
one. Let
$V=\{1,\dots,5\}$ and
\[
L=\begin{pmatrix}
3&-1&1&-2&-1\\
-1&3&-2&1&-1\\
1&-2&3&-1&-1\\
-1&-1&-1&3&0\\
-1&1&-1&-1&2
\end{pmatrix},
\]
whose rows sum to zero and whose off-diagonal entries
$L_{13},L_{24},L_{31},L_{52}$ are positive. We take $B=\{1,2\}$ and
$f:3\mapsto1,\ 4\mapsto3,\ 5\mapsto2$, with root blocks $D_1=\{3,4\}$
and $D_2=\{5\}$, which is the forest of Figure~\ref{fig:example}. The three growth orders $(3,4,5)$, $(3,5,4)$, $(5,3,4)$
have the proper bases $\{1,2\}$, $\{1,2,3\}$, $\{1,2,5\}$, $\{1,2,3,4\}$,
$\{1,2,3,5\}$, on which the minors of $L$ are $8$, $13$, $16$, $31$
and $23$, respectively. The rows of $\tilde\Gamma^B$ indexed by $B$ are
\[
\Gamma^{B}=\begin{pmatrix}-\tfrac18&\tfrac58&\tfrac12\\[2pt]
\tfrac58&-\tfrac18&\tfrac12\end{pmatrix},
\]
and the aggregated matrix is
\[I-\Gamma^BM_f
=\left(\begin{smallmatrix}\frac12&-\frac12\\-\frac12&\frac12\end{smallmatrix}\right),\]
with both principal cofactors equal to $\tfrac12$. In the base
$\{1,2,3\}$,
$\Gamma^{\{1,2,3\}}_{\cdot4}=(\tfrac8{13},-\tfrac1{13},\tfrac1{13})^{\top}$
and
$\Gamma^{\{1,2,3\}}_{\cdot5}=(\tfrac5{13},\tfrac{14}{13},\tfrac{12}{13})^{\top}$. The order weights are
\[
\omega_f(3,4,5)= 1\cdot\big(-\tfrac1{13}\big)=-\tfrac1{13},\qquad
\omega_f(3,5,4)= \tfrac58\cdot\tfrac{12}{13}=\tfrac{15}{26},\qquad
\omega_f(5,3,4)= \tfrac12\cdot1=\tfrac12 .
\]
The first two orders enter at the root $1$ and the third at $2$, so the
order masses are $-\tfrac1{13}+\tfrac{15}{26}=\tfrac12$ at $r=1$ and
$\tfrac12$ at $r=2$, as Theorem~\ref{thm:cofactor} asserts. In particular, the
check at $r=1$ balances a negative weight against a weight whose
factors come from two different bases.
\end{example}

\FloatBarrier
\section{Forest sums and linear extensions}\label{sec:app-comb}

\begin{remark}[Forest sums]\label{rem:forest}
In general, by the all-minors matrix-tree theorem
\cite{Chaiken1982}, $\det L[U]$ is a signed sum over the forests rooted
at $U^c$. Hence Assumption~\ref{ass:hyp} asks that these forest sums do not
vanish.

By Cramer's rule and the all-minors matrix-tree theorem \cite{Chaiken1982},
each entry $\Gamma^U_{uv}$, $u\in U$, $v\in U^c$, is the signed sum over
the forests rooted at $U^c$ in which $u$ lies in the tree of $v$,
divided by the signed sum over all forests rooted at $U^c$, each forest
counted by the product of the off-diagonal entries of $-L$ along its
arcs. In the chain instance, the signs are positive, and this is the
harmonic tree formula \cite{PitmanTang2018}, \cite{Pokarowski1999}.
On the right, the matrix-tree theorem \cite{Tutte1948,Chaiken1982} gives
\begin{equation}\label{eq:arborescence}
\det\big((I-\Gamma^BM_f)[B\setminus\{r\}]\big)
=\sum_{T}\ \prod_{u\in B\setminus\{r\}}\big(\Gamma^BM_f\big)_{u,\,T(u)},
\end{equation}
where the sum is over arborescences $T$ on $B$ rooted at $r$, with
$T(u)$ the parent of $u$. The order-mass identity therefore equates
the total weight of growth orders whose first vertex is a child of
$r$ with the total weight of these arborescences. No weight-preserving
correspondence between the two index sets is known: the sum defining
$g_r(B,f)$ has $|\{\tau\in\Orders_B(f):f(\tau_1)=r\}|$ terms,
whereas the expansion \eqref{eq:arborescence} has at most $|B|^{|B|-2}$.
\end{remark}
We now specialize the identity to harmonic extensions expressed
through positive vertex weights. Recall that successive sampling
without replacement from $B^c$ with positive weights $p$ draws the
ordering $\tau$ with probability
$\prod_{\ell=1}^{k}p_{\tau_\ell}/p(\{\tau_\ell,\dots,\tau_k\})$, which is
the Plackett-Luce law \cite{Luce1959,Plackett1975}. The following
corollary expresses the order-mass identity directly in these weights.

\begin{corollary}[Successive sampling on forests]\label{cor:PL}
Let $f$ be a $B$-parent map with root blocks $D_u$, let $p_v>0$ for
$v\in B^c$, write $p(J):=\sum_{v\in J}p_v$ for $J\subseteq B^c$, and for a growth order
$\tau$ let $B_\ell$ be its bases. Then, for every $r\in B$,
\begin{equation}\label{eq:PL}
\sum_{\tau\in\Orders_B(f):\ f(\tau_1)=r}\
\prod_{1\le \ell\le k-1:\ f(\tau_{\ell+1})\ne\tau_\ell}\frac{p_{\tau_\ell}}{p(B_{\ell-1}^c)}
=\frac{p(D_r)}{p(B^c)} .
\end{equation}
In particular, with $B=\{r\}$ and $p$ uniform on $B^c$,
\begin{equation}\label{eq:Kn}
\sum_{\tau\in\Orders_{\{r\}}(f)}\
\prod_{1\le \ell\le k-1:\ \tau_\ell=f(\tau_{\ell+1})}(k-\ell+1)=k!\,.
\end{equation}
\end{corollary}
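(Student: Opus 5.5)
The plan is to specialize Theorem~\ref{thm:cofactor} to a rank-one matrix $L$ whose harmonic extensions are exactly the successive-sampling probabilities. Extend $p$ by zero on $B$, let $q:=p/p(B^c)\in\R^V$, and set
\[
L:=I-\one q^{\top}.
\]
Then $L\one=\one-\one(q^{\top}\one)=\zero$.

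\emph{Step 1: the minors and the harmonic extensions.} Take any $U$ with $B\subseteq U\subsetneq V$. Then $L[U]=I-\one q|_U^{\top}$, so by \eqref{eq:adj} we get $\det L[U]=1-q(U)=p(U^c)/p(B^c)$. This is positive because $U^c$ is a nonempty subset of $B^c$ and $p>0$ there. Every proper base $B_\ell$ of every growth order contains $B$, so Assumption~\ref{ass:hyp} holds.

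Next I compute the extensions. If $(Lx)|_U=\zero$, then $x_u=q^{\top}x$ for every $u\in U$, so $x$ is constant on $U$, say equal to $c$. Solving $c\,(1-q(U))=\sum_{v\in U^c}q_vx_v$ gives the value $c$. For $x=\gamma^U(e_v)$ with $v\in U^c$, Lemma~\ref{lem:harmonic-extension} then yields
\[
\Gamma^U_{uv}=\frac{p_v}{p(U^c)}\qquad\text{for every }u\in U.
\]

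\emph{Step 2: the left side of \eqref{eq:cofactor}.} Consider the factor at step $\ell$ of \eqref{eq:weight}. If $f(\tau_{\ell+1})\in B_{\ell-1}$, the factor is $\Gamma^{B_{\ell-1}}_{f(\tau_{\ell+1}),\tau_\ell}=p_{\tau_\ell}/p(B_{\ell-1}^c)$. Otherwise $f(\tau_{\ell+1})=\tau_\ell$, and the factor is $1$ by the boundary value. These two cases exhaust the possibilities, since $f(\tau_{\ell+1})\in B_\ell$ for a growth order. Hence $g_r(B,f)$ is exactly the left side of \eqref{eq:PL}.

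\emph{Step 3: the right side of \eqref{eq:cofactor}.} Every row of $\Gamma^B$ equals $(p_v/p(B^c))_{v\in B^c}$. Therefore every row of $\Gamma^BM_f$ equals $s^{\top}$, where $s_u:=p(D_u)/p(B^c)$, and $s^{\top}\one=1$ by Lemma~\ref{lem:partition}. The principal cofactor at $r$ is
\[
\det\big((I-\one s^{\top})[B\setminus\{r\}]\big)=1-\sum_{u\ne r}s_u=s_r
\]
by \eqref{eq:adj}. This proves \eqref{eq:PL}.

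\emph{Step 4: the specialization \eqref{eq:Kn}.} Take $B=\{r\}$ and $p\equiv1$. Then $D_r=B^c$, so the right side of \eqref{eq:PL} is $1$. Also $p(B_{\ell-1}^c)=k-\ell+1$, so each factor is $1/(k-\ell+1)$ on the steps with $f(\tau_{\ell+1})\ne\tau_\ell$. Multiply each summand by $\prod_{\ell=1}^{k-1}(k-\ell+1)=k!$. This cancels the factors on the steps where $f(\tau_{\ell+1})\ne\tau_\ell$ and leaves the product of $(k-\ell+1)$ over the steps with $\tau_\ell=f(\tau_{\ell+1})$. The resulting sum therefore equals $k!$, which is \eqref{eq:Kn}.

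The only delicate point is Step 1. There we must check that the nonsingularity and the constant-row form of $\Gamma^U$ hold for every base $U\supseteq B$, not merely for $U=B$. This works because $q$ vanishes on $B$ and is positive on every remaining vertex.
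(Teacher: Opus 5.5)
Your proof is correct and is essentially the paper's argument. Both specialize Theorem~\ref{thm:cofactor} to the rank-one matrix $L=I-\one q^{\top}$, observe that each column of $\Gamma^U$ is the constant $p_v/p(U^c)$, and read off both sides of \eqref{eq:cofactor}. The only difference is cosmetic: you extend $p$ by zero on $B$, so only the bases containing $B$ are nonsingular, which is all Assumption~\ref{ass:hyp} requires, whereas the paper extends $p$ by positive values on $B$ so that every proper principal minor is positive.
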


\begin{proof}[Proof of Corollary~\ref{cor:PL}]
We extend $p$ to $V$ by positive values on $B$ and normalize its sum to
one, which leaves both sides of \eqref{eq:PL} unchanged. We set
$L:=I-\one p^{\top}$. Then $L$ has zero row sums, $\det L[U]=1-p(U)>0$
for every proper $U$ by \eqref{eq:adj}, and $L[U]\one=(1-p(U))\one$.
Hence, for $v\in U^c$,
\[
\Gamma^U_{\cdot v}=p_vL[U]^{-1}\one=\frac{p_v}{p(U^c)}\one .
\]
Therefore, the factor at $\ell$ is $p_{\tau_\ell}/p(B_{\ell-1}^c)$
when $f(\tau_{\ell+1})\ne\tau_\ell$, and $1$ otherwise. On the right, with
$q_u:=p(D_u)/p(B^c)$ for $u\in B$, we have $\Gamma^BM_f=\one q^{\top}$ and,
by \eqref{eq:adj},
\[
\det\big((I-\one q^{\top})[B\setminus\{r\}]\big)=1-\sum_{u\ne r}q_u=q_r ,
\]
so Theorem~\ref{thm:cofactor} gives \eqref{eq:PL}.
For uniform $p$ and $B=\{r\}$, we multiply each summand by
$k!=\prod_{\ell=1}^{k-1}(k-\ell+1)$ to obtain \eqref{eq:Kn}.
\end{proof}

\begin{remark}[Linear extensions]\label{rem:linext}
In \eqref{eq:Kn}, the weight of a linear extension depends on the
positions at which a vertex is immediately followed by its child.
For example, a star has
$k!$ linear extensions of weight one, and a path has one, of weight
$k!$.
\end{remark}

\section{Completion weights}\label{sec:sm-completion}

\begin{remark}[The contracted chain]\label{rem:contract}
The sum of tree weights over completions is a weighted analogue of
the number of spanning trees containing a fixed forest. Such tree
marginals can be computed by contraction \cite{LyonsPeres2016},
which also appears in the two-stage samplers of
\cite{NunesIacobelliFigueiredo2025}. In our setting, the matrix
\[
Q:=P^{*}[B]+P^{*}[B,B^c]M_f
\]
is a transition matrix on $B$. After one step of the reversed chain from a root, we record the root of the tree containing
the destination. This construction gives a second evaluation of the
completion weight.

To derive this evaluation, consider an added edge into a root
$u\ne r$. If its parent
$x$ lies in the tree of $u'$, the sum of its weights over that tree is
$\sum_x\pi_xP_{xu}=\pi_uQ_{uu'}$. Group the completions by the tree
obtained after contracting each component of $F_0$ to its root.
For each contracted tree, summing over the possible endpoints of its
added edges gives the corresponding product of these flows. The
matrix-tree theorem \cite{Tutte1948,Chaiken1982}, applied to
$I-Q=L[B]+L[B,B^c]M_f$, therefore gives \eqref{eq:phiN}.
In particular,
$\varphi_r(F_0)/\prod_{u\in B\setminus\{r\}}\pi_u$ is the principal
cofactor at $r$ of $I-Q$. These cofactors form an unnormalized
invariant measure of $Q$ by Lemma~\ref{lem:adj}(a). For a fixed seed, larger
cofactors correspond to larger completion weights.

To relate this construction to the order masses, compare $Q$ with
$\Gamma^BM_f$ from Remark~\ref{rem:reading}. The latter records the
tree entered when the reversed chain first exits $B$, after any
number of steps inside $B$, whereas $Q$ records one step followed
by aggregation. The two matrices satisfy
$I-Q=L[B]\,(I-\Gamma^BM_f)$. The product rule for adjugates then gives
the cofactor row of $I-Q$ as
$\det L[B]\,g(B,f)^{\top}L[B]^{-1}$, in agreement with
\eqref{eq:phi} and \eqref{eq:phiN}. Thus the order masses are the
cofactors of $I-\Gamma^BM_f$, whereas the completion weights are the
cofactors of $I-Q$ multiplied, at each root $r$, by
$\prod_{u\in B\setminus\{r\}}\pi_u$.

\end{remark}

\begin{remark}[Computing the completion weight]\label{rem:cost}
For a fixed chain and root set, form $L[B]$ and $L[B,B^c]$ once.
Writing $b:=|B|$ and $k:=|B^c|$, each forest then requires $O(bk)$
operations to assemble $L[B]+L[B,B^c]M_f$ and $O(b^3)$ to evaluate
its principal minor. Forests with the same root blocks share this
calculation.
\end{remark}

\section{The identity for real aggregations}\label{sec:sm-algebra}

The order-mass identity depends on the forest only through its root
indicator. We now extend it to any real aggregation with unit row sums,
remove the root set by doubling the vertex set, and quantify the defect
when one aggregation row does not sum to one.

In this section, for $U\subsetneq V$ with $\det L[U]\ne0$,
\[
\Gamma^U=-L[U]^{-1}L[U,U^c]\in\R^{U\times U^c},\qquad
\tilde\Gamma^U=\begin{pmatrix}\Gamma^U\\ I_{|U^c|}\end{pmatrix}\in\R^{V\times U^c}.
\]

\begin{assumption}[All bases]\label{ass:all}
$L\one=\zero$, and $\det L[U]\ne0$ for every $U$ with
$B\subseteq U\subsetneq V$.
\end{assumption}

\begin{corollary}[Real aggregation]\label{cor:soft}
Let $B\subsetneq V$ be nonempty, $k:=|B^c|$, let $L$ satisfy
Assumption~\ref{ass:all}, and let $M\in\R^{B^c\times B}$ satisfy $M\one=\one$.
Then, for every $r\in B$,
\begin{equation}\label{eq:soft}
\sum_{\tau}\ M_{\tau_1 r}\prod_{\ell=1}^{k-1}\
\sum_{u\in B}M_{\tau_{\ell+1}u}\,\Gamma^{B_{\ell-1}}_{u\tau_\ell}
=\det\big((I-\Gamma^BM)[B\setminus\{r\}]\big),
\end{equation}
the outer sum over all $k!$ orderings $\tau$ of $B^c$, with
$B_\ell:=B\cup\{\tau_1,\dots,\tau_\ell\}$.
\end{corollary}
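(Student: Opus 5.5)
I will prove Corollary~\ref{cor:soft} by repeating the induction of Theorem~\ref{thm:cofactor}. The only change is that the root indicator $M_f$ is replaced by a general real aggregation $M$ with unit row sums, and the parent structure is replaced by a ``soft'' weighting. For an ordering $\tau$ the summand is
\[
M_{\tau_1 r}\,\prod_{\ell=1}^{k-1}\sum_{u\in B}M_{\tau_{\ell+1}u}\Gamma^{B_{\ell-1}}_{u\tau_\ell}.
\]
I first rewrite this so it can be peeled one vertex at a time. Define the aggregation $M^{(\ell)}$ on the base $B_\ell$ as the unique matrix with rows $B_\ell^c$, columns $B_\ell$, and zero columns at $\tau_1,\dots,\tau_\ell$; that is, $M^{(\ell)}$ is $M$ padded by zeros. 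Then $\sum_{u\in B}M_{\tau_{\ell+1}u}\Gamma^{B_{\ell-1}}_{u\tau_\ell}$ is a single entry $(\tilde\Gamma^{B_{\ell-1}}\,\cdot)$ of a row of an aggregation on $B_{\ell-1}$. This does not yet mirror the forest case, where the factor equals $1$ when the parent is $\tau_\ell$ itself.

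So I will instead induct on $k$ with a more flexible statement. Let $M'$ be any aggregation on $B'$ (rows $(B')^c$, columns $B'$, unit row sums). Claim: the sum over orderings $\tau$ of $(B')^c$ of
\[
M'_{\tau_1 r}\prod_{\ell}\sum_{u\in B'_{\ell-1}}M'^{(\ell-1)}_{\tau_{\ell+1}u}\,\tilde\Gamma^{B'_{\ell-1}}_{u\tau_\ell}
\]
equals the principal cofactor at $r$ of $N(B',M')$. Here $M'^{(j)}$ is obtained from $M'$ by successive folds into new root columns, and in Corollary~\ref{cor:soft} the new columns are zero. The natural generalization is therefore to allow arbitrary unit-row-sum $M$ on every enlarged base. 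The recursion becomes: fix the first vertex $w=\tau_1$, set $B'=B\cup\{w\}$, and let $M'$ be the restriction of $M$ to rows $(B')^c$ with a zero column at $w$. Since $\tilde\Gamma^{B}_{w,w}=1$, the $u=w$ term contributes nothing, which matches the formula of the Corollary.

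The induction step is then Lemma~\ref{lem:transfer} applied with $U=B$, $U'=B'$ and this $M'$, followed by summing over $w$ with weights $M_{wr}$. The fold \eqref{eq:foldcol} of $M'$ into a root $s$ produces $M'_{\cdot u}+1(u=s)e_w$. This differs from $M$ only in row $w$, which is $e_s^\top$ instead of $M_{w\cdot}$. Summing over $s$ with weights $M_{ws}$ (these sum to one) should recombine the folds into $M$ through multilinearity of the cofactors in row $w$. This is where I expect the main difficulty: cofactors are not linear in $M$ in general. However, the matrix $I-\Gamma^BM$ depends on row $w$ of $M$ only through the rank-one term $-\Gamma^B_{\cdot w}M_{w\cdot}$, which enters each column additively. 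Principal cofactors are multilinear in columns, not in a single row of $M$, so I will instead apply Lemma~\ref{lem:adj}(b) directly. Write $N(B,M)=N_0-\Gamma^B_{\cdot w}M_{w\cdot}$, where $N_0$ has row $w$ of $M$ set to zero (so $N_0$ does not have zero row sums). Then relate both sides through $\det$ of the bordered matrix $I-G$ used in the proof of Lemma~\ref{lem:transfer}, with the $r$-th column replaced. Concretely, I expect the proof of Lemma~\ref{lem:transfer}, rerun with $e_r^\top$ replaced by a general row vector $c^\top$ with $c^\top\one=1$ in \eqref{eq:Dblock}, to give the right side $\bigl(\Gamma^B(M'_{\cdot w}+e_w)\bigr)_\cdot$ weighted through $d$ and $c$. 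Choosing $c=M_{w\cdot}$ then handles the whole row at once, with no sum over $s$.

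The base case $k=1$ is immediate: both sides equal $M_{wr}$, since $N=I-\one M_{w\cdot}$ and \eqref{eq:adj} gives cofactor $1-\sum_{u\ne r}M_{wu}=M_{wr}$. To finish, sum the transferred expressions over $w\in B^c$ with weights $M_{wr}$, and use $\sum_w \Gamma^B_{\cdot w}M_{wr}=(\Gamma^BM)_{\cdot r}=e_r-N_{\cdot r}$ together with $d^\top N=\zero^\top$ from Lemma~\ref{lem:adj}(a), exactly as at the end of Step~1 of the proof of Theorem~\ref{thm:cofactor}. Assumption~\ref{ass:all} guarantees that every minor invoked along every ordering is nonzero. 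For $|B|=1$ the same argument works, with the cofactor equal to $1$ throughout.
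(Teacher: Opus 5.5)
Your argument is correct but takes a different route from the paper.

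\textbf{The paper's route.} The paper never reruns the induction. When each row of $M$ is a standard basis vector $e_{f(v)}^\top$, $M$ is the root indicator of a parent map $f:B^c\to B$ whose growth orders are all $k!$ orderings, so \eqref{eq:soft} is just Theorem~\ref{thm:cofactor}. Both sides are affine in any single row $m=M_{v\cdot}$; on the right, $I-\Gamma^BM=N_0-\Gamma^B_{\cdot v}m^\top$. An induction on the number of non-standard rows then finishes, since an affine function on $\{\one^\top m=1\}$ that vanishes at every $e_u$ vanishes identically.

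\textbf{Your route.} You redo the induction on $k$ directly for soft $M$. Padding $M$ by a zero column at $w$ gives a unit-row-sum aggregation $M'$ on $B'=B\cup\{w\}$. Its left sides in \eqref{eq:soft}, at the roots $u\in B$, give the sum over orders starting at $w$. So the corollary reproduces itself, and the strengthened statement you sketch is not needed. Folding along $c:=M_{w\cdot}$ instead of $e_r$ is also valid: \eqref{eq:Dblock} becomes $(I-G)[B]=N(B,M)+\Gamma^B_{\cdot w}c^\top$, and Lemma~\ref{lem:adj}(b) contributes only the factor $c^\top\one=1$.

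\textbf{Two details to correct.}
\begin{enumerate}
\item The $u=w$ term vanishes not because $\tilde\Gamma^B_{ww}=1$, but because column $w$ of $N(B',M')$ is $e_w$. Hence $d'_w=\big((d')^\top N(B',M')\big)_w=0$. You need this to match your sum over $u\in B$ with the left side of \eqref{eq:transfer}, which contains $d'_w$ with coefficient $1$.
\item The cofactors are in fact affine in the single row $M_{w\cdot}$, so averaging the single-root folds with weights $M_{ws}$ would also have worked. That affinity is exactly the paper's key step.
\end{enumerate}

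\textbf{Trade-off.} The paper's route is shorter given Theorem~\ref{thm:cofactor}, and the same affinity yields Proposition~\ref{prop:rowsum}. Yours is self-contained and reproves the star-forest case along the way. The cost is the generalized transfer lemma, which is the form stated in the Lean development.
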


\begin{proof}
We first consider the case in which every row of $M$ is a standard
basis vector, $M_{v\cdot}=e_{f(v)}^{\top}$ for a map $f:B^c\to B$. Then $M$ is the root
indicator of the $B$-parent map $f$, whose growth orders are all
orderings of $B^c$ and whose bases are all sets $B\subseteq U\subsetneq V$.
The left side of \eqref{eq:soft} is $g_r(B,f)$, since
$M_{\tau_1r}=1(f(\tau_1)=r)$
and the inner sum is
$\tilde\Gamma^{B_{\ell-1}}_{f(\tau_{\ell+1})\tau_\ell}$,
so \eqref{eq:soft} is Theorem~\ref{thm:cofactor}.

For general $M$, we induct on the number of rows that are not standard
basis vectors. We first show that both sides of \eqref{eq:soft} are affine
functions of any one row $m:=M_{v\cdot}$ when the other rows are fixed.
On the left, each term contains the row $v$ exactly once, as
$M_{\tau_1r}$ if $\tau_1=v$, and in the inner sum at the $\ell$ with
$\tau_{\ell+1}=v$ otherwise. On the right, we have
$I-\Gamma^BM=N_0-\Gamma^B_{\cdot v}\,m^{\top}$ with $N_0$ independent of
$m$, and a determinant whose columns are affine in $m$ along one fixed
vector is affine in $m$, since the terms with two such columns vanish.
Now suppose that the row $v$ is not a standard basis vector. Then the
difference of the two sides, as a function of $m$ on the hyperplane
$\{\one^{\top}m=1\}$, is affine and vanishes at the $e_u$, $u\in B$, by
the induction hypothesis. Since $m=\sum_um_ue_u$ with $\sum_um_u=1$, the
difference at $m$, by affinity, is the weighted sum of its values at
$e_u$, with weights $m_u$, and is therefore zero. This completes the proof.
\end{proof}

The aggregation identity also yields a formula without a prescribed
root set.

\begin{proposition}[The identity without a root set]\label{prop:reduced}
Let $L\in\R^{V\times V}$ satisfy $L\one=\zero$ and $\det L[U]\ne0$ for
every $U$ with $\emptyset\ne U\subsetneq V$, and let $M\in\R^{V\times V}$
have unit row sums. Then, for every $u\in V$,
\begin{equation}\label{eq:reduced}
\sum_{\tau}\ M_{\tau_1 u}\prod_{\ell=1}^{n-1}\
\sum_{v\in V}M_{\tau_{\ell+1}v}\,\tilde\Gamma^{U_{\ell-1}}_{v\tau_\ell}
=\det\big((I-M)[V\setminus\{u\}]\big),
\end{equation}
the sum over all $n!$ orderings $\tau$ of $V$, with
$U_\ell:=\{\tau_1,\dots,\tau_\ell\}$. The factor $\ell=1$ is $M_{\tau_2\tau_1}$,
since $\tilde\Gamma^{\emptyset}=I$.
\end{proposition}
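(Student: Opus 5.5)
The plan is to derive \eqref{eq:reduced} from Corollary~\ref{cor:soft} by doubling the vertex set. We adjoin a copy $\hat u$ of each $u\in V$, take the copies as the root set, and let each copy simply read off the value at its original vertex. Concretely, put $W:=\hat V\sqcup V$ with $\hat V:=\{\hat u:u\in V\}$, root set $B:=\hat V$ (so $B^c=V$ and $k=n$), and define $\hat L\in\R^{W\times W}$ by
\[
\hat L[\hat V]=I,\qquad \hat L_{\hat u,u}=-1,\qquad \hat L_{\hat u,v}=0\ (v\ne u),\qquad \hat L[V,\hat V]=0,\qquad \hat L[V]=L .
\]
Every row of $\hat L$ sums to zero, since the rows $\hat u$ are $e_{\hat u}^\top-e_u^\top$ and the rows $V$ are those of $L$ padded by zeros.

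\emph{Assumption~\ref{ass:all}.} The sets $U'$ with $B\subseteq U'\subsetneq W$ are exactly $U'=\hat V\cup U$ with $U\subsetneq V$. The matrix $\hat L[\hat V\cup U]$ is block upper triangular with diagonal blocks $I$ and $L[U]$. Hence its determinant is $\det L[U]$, which is nonzero for nonempty proper $U$ by hypothesis, and equals $1$ for $U=\emptyset$.

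\emph{Harmonic extensions.} Fix $U\subsetneq V$ and boundary data $m$ on $V\setminus U$. By Lemma~\ref{lem:harmonic-extension}, the harmonic extension $x$ for $\hat L$ on $\hat V\cup U$ is characterized by two conditions: its rows $U$ give $(Lx|_V)|_U=\zero$, and its rows $\hat u$ give $x_{\hat u}=x_u$. Because the rows $V$ of $\hat L$ do not involve $\hat V$, the first condition says that $x|_V=\gamma^U(m)$ computed with $L$ (with $\gamma^\emptyset(m)=m$). The second then gives $x_{\hat u}=(\tilde\Gamma^Um)_u$. Therefore
\[
\Gamma^{\hat V\cup U}_{\hat u,v}=\tilde\Gamma^U_{uv}\qquad(u\in V,\ v\in V\setminus U).
\]

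\emph{Aggregation and matching.} Define $\hat M\in\R^{V\times\hat V}$ by $\hat M_{v\hat u}:=M_{vu}$, which has unit row sums. Apply Corollary~\ref{cor:soft} to $(\hat L,B,\hat M)$ at the root $\hat u$. An ordering $\tau$ of $B^c=V$ has bases $B_\ell=\hat V\cup U_\ell$. The inner sum at step $\ell$ is therefore $\sum_{u'}M_{\tau_{\ell+1}u'}\tilde\Gamma^{U_{\ell-1}}_{u'\tau_\ell}$, and the prefactor is $M_{\tau_1u}$. This is exactly the left side of \eqref{eq:reduced}, with $\ell$ running to $k-1=n-1$. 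On the right, the case $U=\emptyset$ gives $\Gamma^{\hat V}_{\hat u,v}=\delta_{uv}$. Hence $\Gamma^B\hat M$ is $M$ under the identification $\hat u\leftrightarrow u$, and $\det((I-\Gamma^B\hat M)[B\setminus\{\hat u\}])=\det((I-M)[V\setminus\{u\}])$.

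The main point needing care is the harmonic-extension identification. In particular, the case $U=\emptyset$ must be included, and the resulting factor $\tilde\Gamma^\emptyset=I$ must reproduce the first factor $M_{\tau_2\tau_1}$ noted in the statement. Everything else is bookkeeping of indices.
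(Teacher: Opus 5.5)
Your proposal is correct and is essentially the paper's own proof. The paper also doubles the vertex set, using $L'=\left(\begin{smallmatrix}I_n&-I_n\\ \zero&L\end{smallmatrix}\right)$ with the copies as the root set. It checks Assumption~\ref{ass:all} by block triangularity, with $L'[B]=I_n$ covering $U=\emptyset$, and identifies the harmonic extension into $B\cup U$ as $\gamma^U(e_w)$ on $V$ together with its copy on $B$. It then reads off both sides of Corollary~\ref{cor:soft}, where $\Gamma^B=I$ turns the right side into the cofactor of $I-M$.
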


\begin{proof}
The key step is to double the vertex set. Let $B:=\{1',\dots,n'\}$ be a
copy of $V$ and $V':=B\sqcup V$. In the block order $(B,V)$, we let
$L':=\left(\begin{smallmatrix}I_n&-I_n\\ \zero&L\end{smallmatrix}\right)$,
and read $M$ as an aggregation of $V$ into $B$. Then Assumption~\ref{ass:all}
holds for $(L',B)$. Indeed, $L'\one=\zero$, block triangularity gives
$\det L'[B\cup U]=\det L[U]\ne0$ for nonempty proper $U\subset V$, and
$L'[B]=I_n$ covers $U=\emptyset$. It remains to identify the two sides
of \eqref{eq:soft} for $(L',B,M)$.

On the right side of \eqref{eq:soft},
the matrix $\Gamma^B$ of $B$ for $L'$ is $-L'[B]^{-1}L'[B,V]=I$, so the
aggregated matrix is $I-M$, whose cofactor at $u'$ is the right side of
\eqref{eq:reduced}.

On the left side of \eqref{eq:soft}, for a proper $U\subset V$
and $w\in U^c$, the harmonic extension $x$ of $e_w$ into $B\cup U$ for
$L'$ is defined by $x|_{V\setminus U}=e_w|_{V\setminus U}$,
$x_{v'}=x_v$ for $v\in V$, and $L[U,V]\,x|_V=\zero$. Uniqueness gives
$x|_V=\gamma^{U}(e_w)$ by Lemma~\ref{lem:harmonic-extension}, which for
$U=\emptyset$ reads $x|_V=e_w$, and $x|_B$ is its copy. For $U=U_{\ell-1}$ and
$w=\tau_\ell$ each inner factor of \eqref{eq:soft} becomes
$\sum_{v\in V}M_{\tau_{\ell+1}v}\tilde\Gamma^{U_{\ell-1}}_{v\tau_\ell}$, and the
initial factor is $M_{\tau_1u'}=M_{\tau_1u}$. Therefore, Corollary~\ref{cor:soft}
for $(L',B,M)$ at $u'$ is exactly \eqref{eq:reduced}. This completes
the proof.
\end{proof}

In this construction, $\Gamma^B=I$, so the aggregated matrix is $I-M$
for every $L$. The matrix $L$ enters only through the harmonic
extensions inside $V$ on the left side of \eqref{eq:reduced}.

The affinity in the proof of Corollary~\ref{cor:soft} also gives the
exact effect of a row with another sum.

\begin{proposition}[The defect of a non-unit row]\label{prop:rowsum}
Let $L$ satisfy Assumption~\ref{ass:all}, fix $v\in B^c$ and $r\in B$, and let
$M\in\R^{B^c\times B}$ have unit row sums in every row other than the row
$v$. As functions of $m:=M_{v\cdot}$, the two sides of \eqref{eq:soft}
satisfy
\[
\mathrm{LHS}(m)-\mathrm{RHS}(m)=\big(\one^{\top}m-1\big)\cdot
\det\big((I-\Gamma^BM^{0})[B\setminus\{r\}]\big),
\]
where $M^{0}$ is $M$ with its row $v$ replaced by $\zero$.
\end{proposition}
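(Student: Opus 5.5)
Both sides of \eqref{eq:soft} are affine in $m$, as shown in the proof of Corollary~\ref{cor:soft}. That argument did not use the unit-row-sum constraint on row $v$, so it holds on all of $\R^B$. The difference $D(m):=\mathrm{LHS}(m)-\mathrm{RHS}(m)$ is therefore affine on $\R^B$. By Corollary~\ref{cor:soft}, $D$ vanishes on the hyperplane $\{\one^{\top}m=1\}$, because every such $M$ has unit row sums. An affine function vanishing on this hyperplane has the form $D(m)=c\,(\one^{\top}m-1)$ for some constant $c$. This holds because the linear part of $D$ vanishes on $\{\one^{\top}x=0\}$, so it is a multiple of $\one^{\top}x$. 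Hence it suffices to evaluate $D$ at the single point $m=\zero$, which gives $c=-D(\zero)$.

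At $m=\zero$ the matrix $M$ equals $M^0$. On the left side, every term contains row $v$ exactly once as a linear factor: either as $M_{\tau_1 r}$ when $\tau_1=v$, or inside the inner sum at the step $\ell$ with $\tau_{\ell+1}=v$. So $\mathrm{LHS}(\zero)=0$. Consequently $D(\zero)=-\det((I-\Gamma^BM^0)[B\setminus\{r\}])$, and $c=\det((I-\Gamma^BM^0)[B\setminus\{r\}])$, which is the claimed formula.

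I should double-check the affinity of the right side in $m$ without the row-sum constraint. The matrix is $I-\Gamma^BM=N_0-\Gamma^B_{\cdot v}m^{\top}$, where $N_0=I-\Gamma^BM^0$. Restricting to $B\setminus\{r\}$ gives a rank-one perturbation of a fixed matrix. Its determinant is $\det(N_0[B\setminus\{r\}])$ plus a term linear in $m$, since multilinear expansion terms with two perturbed columns vanish. Thus $\mathrm{RHS}(\zero)=\det(N_0[B\setminus\{r\}])$, consistent with the above.

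The left side is affine (indeed linear) in $m$ for the reason already given. The only real subtlety, and the step I expect to need care, is that Corollary~\ref{cor:soft} must be applicable at every point of the hyperplane. This requires the other rows to have unit sums, which is assumed, and Assumption~\ref{ass:all}, which does not depend on $M$. So the hyperplane vanishing is legitimate and the proof closes.
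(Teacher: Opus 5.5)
Your proof is correct and matches the paper's argument: both sides are affine in $m$ on all of $\R^B$, they agree on the hyperplane $\{\one^{\top}m=1\}$ by Corollary~\ref{cor:soft}, so $D(m)=c\,(\one^{\top}m-1)$, and you find $c$ at $m=\zero$, where every term of the left side is linear in $m$ and so vanishes. The paper phrases the middle step by writing the difference as $a^{\top}m+a_0$ and showing $a=c\one$ and $a_0=-c$; this is the same as your observation about the linear part.
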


\begin{proof}
The difference $h:=\mathrm{LHS}-\mathrm{RHS}$ is affine in $m$, say
$h(m)=a^{\top}m+a_0$, and vanishes on $\{\one^{\top}m=1\}$ by
Corollary~\ref{cor:soft}. For $m_0$ on that hyperplane and $z$ with
$\one^{\top}z=0$, we have $a^{\top}z=h(m_0+z)-h(m_0)=0$, so $a=c\one$
for a scalar $c$, and $h(m_0)=0$ gives $a_0=-c$. At $m=\zero$, every
term of the left side contains a factor linear in $m$, so
$\mathrm{LHS}(\zero)=0$ and $c=-h(\zero)=\mathrm{RHS}(\zero)$, which is
the displayed cofactor. This completes the proof.
\end{proof}

Hence, for fixed other rows and a fixed root $r$, \eqref{eq:soft} holds
for a row with $\one^{\top}m\ne1$ if and only if the cofactor
$\det\big((I-\Gamma^BM^{0})[B\setminus\{r\}]\big)=0$. Unit row
sums are therefore sufficient for every configuration and every root,
and they cannot be dropped in general, but they are not necessary
for a fixed configuration and root. For example, let $V=\{1,2,3,4\}$,
$B=\{1,2\}$, $L=I-\one\one^{\top}/4$, $M_{3\cdot}=(a,b)$ and
$M_{4\cdot}=(-1,2)$. Then every entry of $\Gamma^B$ equals $\tfrac12$,
the cofactor vanishes at $r=1$, and both sides of \eqref{eq:soft} equal
$-\tfrac b2$ at $r=1$ for all $a,b$. At $r=2$, the cofactor is $\tfrac32$,
and the difference of the two sides is $\tfrac32(a+b-1)$.

The zero row sums of $L$ cannot be dropped either. For example,
for $|B|=2$, $|B^c|=1$ and $f(w)=1$, the cofactor at the root $2$ is
$1-\Gamma^B_{1w}$, which vanishes by \eqref{eq:unitsum}, as the left side
of \eqref{eq:cofactor} at $r=2$ does. Without zero row sums in the
rows $B$, however, the entry $\Gamma^B_{1w}=-(L[B]^{-1}L[B,\{w\}])_1$
can take any value, so the cofactor need not vanish.

\section{Randomizing the seed does not remove the tilt}\label{sec:sm-seed}

By Theorem~\ref{thm:stopped}, the stopped covering walk samples the
weight-proportional law tilted by the completion weight. A natural
question is whether a mixture over seeds removes the tilt. We now show
that it does not in general.

\begin{proposition}[No seed mixture is unbiased in general]\label{prop:notilt}
Let $P$ be irreducible and stochastic with stationary distribution
$\pi$, let $\lambda$ be as in \eqref{eq:lam}, let $B\subsetneq V$ be
nonempty, and for $r\in B$ let $\pr_r$ be the
law of the first-entrance forest of the stopped covering walk from $r$.
For every $c\in\R^{B}$ and every $F_0\in\Fset_B$ with parent map $f$,
\begin{equation}\label{eq:mixture}
\sum_{r\in B}c_r\,\pr_r(F_B=F_0)
=\frac{w(F_0)}{(\one^{\top}\lambda)\prod_{v\in V}\pi_v}
\sum_{r\in B}c_r\,\varphi_r(F_0).
\end{equation}
For the chain of Example~\ref{ex:running} with $B=\{1,2\}$, no $c\ne\zero$
makes the left side proportional to the forest weight on $\Fset_B$.
\end{proposition}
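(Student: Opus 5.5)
The plan has two parts: derive \eqref{eq:mixture} from Theorem~\ref{thm:stopped}, then specialize to Example~\ref{ex:running} and reduce the nonproportionality claim to a small linear system.

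\emph{The identity.} For each seed $r\in B$, the second form of \eqref{eq:closed} gives
\[
\pr_r(F_B=F_0)=\frac{w(F_0)\,\varphi_r(F_0)}{(\one^{\top}\lambda)\prod_{v\in V}\pi_v}.
\]
The prefactor $w(F_0)/\big((\one^{\top}\lambda)\prod_{v\in V}\pi_v\big)$ does not depend on the seed. Multiplying by $c_r$ and summing over $r\in B$ therefore gives \eqref{eq:mixture} immediately, with no further work.

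\emph{The counterexample.} Take $P=W/5$ from Example~\ref{ex:running} and $B=\{1,2\}$. Every entry of $P$ is positive, so every forest in $\Fset_B$ has positive weight. By \eqref{eq:mixture}, the left side is proportional to $w(F_0)$ on $\Fset_B$, with some constant $K\in\R$, if and only if
\[
c_1\varphi_1(F_0)+c_2\varphi_2(F_0)=K\qquad\text{for every }F_0\in\Fset_B .
\]
It is enough to test this on the three forests $f_1,f_2,f_3$, using the completion weights in Table~\ref{tab:completion}, which come from \eqref{eq:phi} and the order masses $(1,0)$, $(0,1)$, $(\tfrac35,\tfrac35)$. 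This gives three linear equations:
\begin{itemize}
\item $f_1$: $\tfrac15c_1+\tfrac1{20}c_2=K$;
\item $f_2$: $\tfrac1{20}c_1+\tfrac15c_2=K$;
\item $f_3$: $\tfrac3{20}(c_1+c_2)=K$.
\end{itemize}
Subtracting the first two equations gives $\tfrac3{20}(c_1-c_2)=0$, so $c_1=c_2$ and $K=c_1/4$. Substituting into the third equation gives $\tfrac3{10}c_1=\tfrac14c_1$, hence $c_1=0$. So the only solution is $c=\zero$ (and then $K=0$), for every value of $K$, including $K=0$. This proves that no $c\ne\zero$ works.

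\emph{Main obstacle.} The step most worth double-checking is the entries of Table~\ref{tab:completion}, since the whole argument rests on them. I would recompute them from \eqref{eq:phi}, using $\det L[B]=\tfrac35$, uniform $\pi=\tfrac14\one$, and $L[B]^{-1}=\tfrac53\big(\begin{smallmatrix}4/5&1/5\\1/5&4/5\end{smallmatrix}\big)$. As a cross-check I would also use \eqref{eq:phiN}, which gives $\varphi_r(F_0)=\tfrac14\,(L[B]+L[B,B^c]M_f)_{uu}$ for $u\ne r$.
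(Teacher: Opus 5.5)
Your proposal is correct and follows the paper's proof. You read off \eqref{eq:mixture} from the second form of \eqref{eq:closed}, whose prefactor does not depend on the seed, and for the example you combine positivity of all forest weights with the completion weights of $f_1,f_2,f_3$ in Table~\ref{tab:completion} to force $c_1=c_2$ and then $c=\zero$; your explicit handling of the constant $K$ (including $K=0$) and your recomputation of the table entries via \eqref{eq:phi} and \eqref{eq:phiN} both check out.
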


\begin{proof}
Formula \eqref{eq:mixture} follows immediately from \eqref{eq:closed}. In the example, every forest
has positive weight, so proportionality to the forest weight forces
$\sum_rc_r\varphi_r(F_0)$ to take the same value for every $F_0\in\mathcal F_B$. According to Table~\ref{tab:completion}, $(\varphi_1,\varphi_2)$ equals
$(\tfrac15,\tfrac1{20})$, $(\tfrac1{20},\tfrac15)$ and
$(\tfrac3{20},\tfrac3{20})$ at $f_1$, $f_2$ and $f_3$, respectively. Equating the values at $f_1$ and
$f_2$ gives $c_1=c_2=:c$. The values at $f_1$ and $f_3$ are then
$(\tfrac15+\tfrac1{20})c=\tfrac c4$ and $(\tfrac3{20}+\tfrac3{20})c=\tfrac{3c}{10}$, forcing $c=0$. This completes the proof.
\end{proof}

To see where the tilt comes from, let $B=\{r,u\}$. A completion of $F_0$ at $r$ adds one edge into $u$ from
the tree of $r$, so
$\varphi_r(F_0)=\sum_{x\in D_r\cup\{r\}}\pi_xP_{xu}$ is the stationary
flow from the tree of $r$ into $u$. Hence the walk from $r$ favors the
forests whose tree of $r$ sends much stationary flow into $u$, and
randomizing the seed only averages two such preferences.

\section{Lean formalization}\label{sec:sm-lean}

\begin{sloppypar}
We formalize the order-mass identity, the Aldous-Broder theorem,
and the stopped-forest law in Lean~4
\cite{deMouraUllrich2021} with mathlib \cite{mathlib2020} (toolchain
v4.30.0, mathlib v4.30.0), in the directory \texttt{EAB/Paper}
of the repository \url{https://github.com/YuZh98/aldous-broder-lean}. The development imports mathlib only,
follows the proofs of this article, and contains no \texttt{sorry}. The four endpoint theorems \texttt{order\_mass\_identity},
\texttt{Chain.aldous\_broder}, \texttt{Chain.aldous\_broder\_normalized}
and \texttt{Chain.stopped\_forest\_law}, together with their principal
proof obligations, depend only on the axioms \texttt{propext},
\texttt{Classical.choice} and \texttt{Quot.sound}, which
\texttt{lake env lean AxiomCheck.lean} prints after \texttt{lake build}.
Table~\ref{tab:lean} lists the correspondence. All declarations are in
the namespace \texttt{EAB.Paper}.
\end{sloppypar}

\begin{table}[ht]
\caption{Results and their Lean declarations.}\label{tab:lean}
\centering
\footnotesize
\begin{tabular}{@{}>{\raggedright\arraybackslash}p{3.3cm}>{\raggedright\arraybackslash}p{8.4cm}@{}}
\toprule
result & Lean declaration \\
\midrule
Definition~\ref{def:gamma}, \eqref{eq:unitsum} & \texttt{Foundation.harmonic}, \texttt{Foundation.gamma}, \texttt{Foundation.harmonic\_unique}, \texttt{Foundation.harmonic\_one} \\
\eqref{eq:Gamma} & $\Gamma^U_{uv}$ is \texttt{Foundation.gamma L U v u} for $u\in U$ \\
Lemma~\ref{lem:adj} & \texttt{Foundation.adjugate\_apply\_eq\_principalCofactor}, \texttt{Foundation.principalCofactor\_left\_kernel}, \texttt{Foundation.det\_add\_rank\_one\_zero\_row\_sum} \\
Definition~\ref{def:forest}, Lemma~\ref{lem:partition} & \texttt{IsPaperParentMap}, \texttt{growthOrders}, \texttt{rootIndicator}, \texttt{rootBlock\_pairwiseDisjoint}, \texttt{rootBlocks\_cover}, \texttt{childDescendants\_cover} \\
\eqref{eq:weight}, Definition~\ref{def:ordermass} & \texttt{orderWeight}, \texttt{orderMass} \\
Lemma~\ref{lem:transfer} & \texttt{Foundation.one\_vertex\_transfer}, \texttt{Foundation.one\_vertex\_transfer\_singleton} \\
Theorem~\ref{thm:cofactor} & \texttt{order\_mass\_identity}, \texttt{single\_root\_normalization} \\
Lemma~\ref{lem:admissible} & \texttt{Chain.det\_principal\_reversedMatrix\_pos}, \texttt{Chain.green\_hasSum}, \texttt{Chain.green\_reversal}, \texttt{Chain.inv\_diag\_eq\_det\_ratio}, \texttt{Chain.inv\_column\_eq\_gamma} \\
Lemma~\ref{lem:stationary} & \texttt{Chain.isStationary\_cofactor}, \texttt{Chain.IsStationary.cofactorVec\_eq} \\
Proposition~\ref{prop:orderlaw} & \texttt{Chain.chainLaw\_orderEvent\_eq} \\
Theorem~\ref{thm:ab} & \texttt{Chain.aldous\_broder}, \texttt{Chain.aldous\_broder\_normalized} \\
Theorem~\ref{thm:stopped} & \texttt{Chain.stopped\_forest\_law} \\
\bottomrule
\end{tabular}
\end{table}

\begin{sloppypar}
The Lean statements differ from the text in the following modeling
choices. The law of the chain started at $r$ is mathlib's Ionescu-Tulcea
trajectory measure (\texttt{Chain.chainLaw}), checked against the
cylinder formula (\texttt{Chain.chainLaw\_cylinder}), and probabilities
are extended nonnegative reals. The forest event requires the walk to
visit every state, which has probability one
(\texttt{Chain.ae\_hits\_all}), and its first-entrance parents to equal
$f$. This is the event $\{F_B=F_0\}$ up to a null set. The stationary
distribution enters as a hypothesis that $\pi$ is positive, normalized
and invariant, and \texttt{Chain.IsStationary.cofactorVec\_eq} proves
that such a $\pi$ is the normalized cofactor vector, hence unique. The
standing assumption $n\ge2$ appears as \texttt{2 \(\le\) Fintype.card V}.
The formal version of Theorem~\ref{thm:cofactor} is stated over an
arbitrary field, and
\texttt{Chain.stopped\_forest\_law} holds for every parent assignment,
including those of weight zero. It states \eqref{eq:phi} together with
the law obtained by substituting \eqref{eq:phi} into
\eqref{eq:closed}. Lemma~\ref{lem:partition} is formalized for
the children of roots, which is the case the proof uses, and
\texttt{Foundation.one\_vertex\_transfer} allows a fold along any
weights that sum to one, of which Lemma~\ref{lem:transfer} is the fold into
a single root.
\end{sloppypar}

Proposition~\ref{prop:orderlaw} is formalized for
growth orders in its general form \eqref{eq:orderlaw}. Its statement for
the other orderings and the simplification \eqref{eq:orderlaw1} are not
stated separately. We have not formalized Theorem~\ref{thm:orderlaw}, Corollary~\ref{cor:stoppedorder},
Corollary~\ref{cor:PL}, the results of
Sections~\ref{sec:sm-algebra} and~\ref{sec:sm-seed}, and the numerical
experiments.

\section{Numerical illustration}\label{sec:numerics}

We use small finite examples to examine three consequences of the
stopped-forest law: its distance from the weight-proportional law,
the target mass lost through incomplete support, and the variability
of importance-corrected estimates. We generate $30$ chains in each
of three families on $V=\{1,\dots,7\}$. Complete enumeration of the
forest space gives both laws without sampling. The experiments
illustrate the formulas and check their implementation. Estimating
the typical bias on larger graphs is beyond their scope.

\subsection{Setting}\label{sec:sm-numerics}
Table~\ref{tab:families} specifies the dense, graph, and drift families.
For every chain, we use the nested root sets
$B=\{1\},\{1,2\},\dots,\{1,\dots,6\}$ with seed $1$.

In the drift family, changing the holding weights leaves the jump
chain unchanged and multiplies every off-diagonal stationary flow
$\pi_uP_{uv}$ by a common constant. The $30$ drift chains therefore
share the same forest laws and give one example for the distributional
comparisons, which we display once.

\begin{table}[!htbp]
\caption{The three families of chains on $V=\{1,\dots,7\}$, with $30$
chains each.}\label{tab:families}
\centering
\small
\begin{tabular}{@{}l>{\raggedright\arraybackslash}p{8.2cm}c@{}}
\toprule
family & transition matrix & reversible\\
\midrule
dense & $P_{uv}=W_{uv}/\sum_{x}W_{ux}$, with $W_{uv}$ independent and
uniform on $\{1,\dots,5\}$ & in general no\\
\addlinespace[2pt]
graph & simple random walk on a random graph, in which each of the $21$
possible edges is present independently with probability $0.45$, before
conditioning on connectivity & yes\\
\addlinespace[2pt]
drift & $P_{uv}\propto W_{uv}$, with $W_{u,u+1}=3$ and $W_{u,u-1}=1$
(indices modulo $7$), and $W_{uu}$ uniform on $\{0,1,2\}$ & no\\
\bottomrule
\end{tabular}
\end{table}

For each chain and root set, we enumerate the parent maps in
$\Fset_B$. We compute the order masses from the principal cofactors,
then obtain the stopped-walk law $q_A$ with seed $1$ from
\eqref{eq:closed}. The comparison law is $q_W\propto w$. We measure
their difference by total variation distance,
$\tfrac12\sum_f|q_A(f)-q_W(f)|$. The stationary distributions and
normalizing constants are computed in exact rational arithmetic, while the remaining linear algebra uses double precision.

To measure support loss, we must distinguish structural zeros from
numerical roundoff. By Remark~\ref{rem:contract}, $\varphi_1(F_0)=0$ exactly when some
root cannot reach $1$ through positive entries of $Q$. The script
\nolinkurl{zero_check.py} compares this combinatorial test
with a $10^{-14}$ threshold on the factor
$g(B,f)^{\top}L[B]^{-1}e_1$, which differs from the completion weight
by a positive constant for each fixed chain and root set. The tests
agree in all $546{,}120$ cases. Computed values of this factor at
structural zeros are at most $2.2\times10^{-16}$ in magnitude, and
the smallest positive value is $2.7\times10^{-2}$.

The script \nolinkurl{make_experiments.py} generates the
chains and enumerated laws with random seed $20260918$. The figure
script \nolinkurl{plot_bias.py} reads these stored
outputs to produce Figure~\ref{fig:bias}, without recomputing the laws.
All scripts named in this section and their stored outputs are in the
repository \url{https://github.com/YuZh98/aldous-broder-numerics}.

\subsection{Results}

Figure~\ref{fig:bias} shows the total variation distance by root count.
For one root, the two laws agree, as Theorem~\ref{thm:ab} requires.
With two roots, the median total variation distance is about $0.26$
for both the dense and graph families. With three roots, the medians
are about $0.27$ and $0.30$, respectively. The largest distance is
$0.75$, attained by one graph chain with four roots. Beyond three
roots, the median distances decrease to about $0.11$ at $|B|=6$.
These examples show that a stopped walk can differ substantially
from the target forest law even when the underlying chain is
reversible, as in the graph family.

The drift example has smaller distances for the chosen roots and
seed, increasing from $0.001$ at $|B|=2$ to $0.083$ at $|B|=6$.
This behavior depends on the contiguous roots and seed $1$, as the
next experiment shows.

\subsection{Sensitivity}\label{sec:sens}
We vary the placement of the roots, the strength of the drift and the
connectivity of the graph. The script
\nolinkurl{robustness.py} reproduces the values below.

\emph{Placement.} For the same $30$ dense and $30$ graph chains, we
replace the nested roots and the seed $1$ by a uniformly random root set
of each size and a uniformly random seed in it. The median distances for
$|B|=2,\dots,6$ are $0.24$, $0.29$, $0.21$, $0.20$ and $0.13$ for the
dense family and $0.23$, $0.28$, $0.30$, $0.27$ and $0.16$ for the graph
family. In these samples, the medians are close to those of
Figure~\ref{fig:bias}.

\emph{Drift.} For the drift cycle, we examine the effect of root
placement and seed choice. Table~\ref{tab:drift} gives the range of the distance over
the seeds in the root set, for contiguous roots $\{1,\dots,b\}$ and for
roots spread evenly around the cycle, and for three drift strengths.
The small distances of Figure~\ref{fig:bias} occur only for the contiguous
roots and one seed. With another seed, or with spread roots, the
distance is of the same order as for the other families.

\emph{Connectivity.} For $30$ random connected graphs with edge
probability $0.30$, $0.45$ and $0.70$, the median distance for two roots
is $0.40$, $0.23$ and $0.23$, and the fraction of chains with lost mass
is $0.57$, $0.23$ and $0.13$. In these samples, the sparser graphs
have larger median distances and a higher frequency of support loss.

\begin{table}[!htbp]
\caption{Range of the total variation distance over the seeds in the
root set, for the drift cycle with forward weight $a$ and backward
weight $1$.}\label{tab:drift}
\centering
\footnotesize
\begin{tabular}{@{}llccccc@{}}
\toprule
$a$ & roots & $|B|=2$ & $3$ & $4$ & $5$ & $6$\\
\midrule
$1.5$ & contiguous & $0.03$--$0.23$ & $0.07$--$0.36$ & $0.13$--$0.44$ & $0.16$--$0.45$ & $0.01$--$0.37$\\
 & spread & $0.46$ & $0.56$ & $0.48$--$0.51$ & $0.26$--$0.39$ & $0.01$--$0.37$\\
\addlinespace[2pt]
$3$ & contiguous & $0.00$--$0.22$ & $0.00$--$0.29$ & $0.01$--$0.31$ & $0.03$--$0.30$ & $0.08$--$0.25$\\
 & spread & $0.30$--$0.31$ & $0.43$--$0.44$ & $0.40$--$0.47$ & $0.24$--$0.40$ & $0.08$--$0.25$\\
\addlinespace[2pt]
$6$ & contiguous & $0.00$--$0.14$ & $0.00$--$0.16$ & $0.00$--$0.17$ & $0.01$--$0.16$ & $0.02$--$0.14$\\
 & spread & $0.16$ & $0.27$--$0.28$ & $0.26$--$0.34$ & $0.14$--$0.26$ & $0.02$--$0.14$\\
\bottomrule
\end{tabular}
\end{table}

\subsection{Lost mass, effective sample size and an estimator}
Total variation distance alone does not identify whether correction
is possible. We therefore report two diagnostics in Table~\ref{tab:lost}:
the unreachable target mass $q_W(S_1^c)$ and the limiting normalized
effective sample size of the inverse-completion weights. The former
measures the part of the target law that no reweighting can recover.
The latter measures weight dispersion on the reachable forests.
For $m$ independent draws $F^{(i)}\sim q_A$ with
weights $a_i:=1/\varphi_1(F^{(i)})$, the normalized effective sample size
$(\sum_ia_i)^2/(m\sum_ia_i^2)$ converges almost surely to
\[
\mathrm{ESS}/m:=\frac{(\E_{q_A}a)^2}{\E_{q_A}a^2}
=\Big(\sum_{F_0\in S_1}\frac{\bar q_W(F_0)^2}{q_A(F_0)}\Big)^{-1},
\qquad \bar q_W:=q_W(\cdot\mid S_1),
\]
which is the value reported in the table. The precision of a particular estimate also
depends on the function $h$ and its association with the weights.

Lost mass is zero for all dense and drift chains. In the graph
family, it is positive for about half the chains with two or three
roots and reaches $0.75$. For the graph chain with four roots that
attains the largest total variation distance in Figure~\ref{fig:bias}, the
distance is entirely lost mass. Reweighting cannot reduce that
distance. On the reachable forests, the median limiting normalized
ESS is between $0.59$ and $0.95$ for the dense and graph families,
with a minimum of $0.31$.

\begin{table}[!htbp]
\caption{Lost target mass $q_W(S_1^c)$ for the graph family and
limiting normalized effective sample size $\mathrm{ESS}/m$ for the three
families, over the $30$ chains of each family. The lost mass is zero
for every dense and drift chain.}\label{tab:lost}
\centering
\footnotesize
\begin{tabular}{@{}ccccccccc@{}}
\toprule
& \multicolumn{3}{c}{graph: lost mass} & \multicolumn{5}{c}{$\mathrm{ESS}/m$}\\
\cmidrule(lr){2-4}\cmidrule(l){5-9}
$|B|$ & fraction $>0$ & median & max & dense med. & dense min & graph med. & graph min & drift\\
\midrule
$2$ & $0.53$ & $0.15$ & $0.58$ & $0.59$ & $0.31$ & $0.83$ & $0.69$ & $1.00$\\
$3$ & $0.57$ & $0.10$ & $0.58$ & $0.68$ & $0.43$ & $0.73$ & $0.52$ & $1.00$\\
$4$ & $0.37$ & $0$ & $0.75$ & $0.79$ & $0.59$ & $0.76$ & $0.34$ & $1.00$\\
$5$ & $0.20$ & $0$ & $0.50$ & $0.86$ & $0.72$ & $0.87$ & $0.47$ & $0.99$\\
$6$ & $0.03$ & $0$ & $0.20$ & $0.94$ & $0.83$ & $0.95$ & $0.62$ & $0.97$\\
\bottomrule
\end{tabular}
\end{table}

To illustrate the effect on an estimate, consider the expected size
of the seed's tree, $h(F_0):=1+|D_1|$, with $B=\{1,2\}$.
The first dense chain has complete support. Its target expectation
is $3.237$, while the uncorrected stopped-walk expectation is $4.359$.
The self-normalized correction converges to $3.237$, although the
sample ratio is generally biased at finite sample sizes. Across
$4{,}000$ replications with $m=200$ independent draws each, the
corrected estimate has mean $3.241$ and standard deviation $0.175$.
The limiting normalized ESS is $0.68$.

For the graph chain with the largest lost mass, $0.577$, the target
expectation is $1.923$ and the uncorrected expectation is $3.733$.
The correction converges to the conditional target $3.182$. Thus
correct weighting on the sampled forests cannot recover the forests
that are never sampled. These calculations are generated by
\nolinkurl{estimator_demo.py}, which reuses the chains from
\nolinkurl{make_experiments.py} and uses random seed $1$ for the
replications.

\subsection{Checks}
We check the computations in two independent ways. First, for every
chain, root set, and seed, the computed stopped-walk probabilities sum
to one and agree with the tree-law marginal obtained by enumerating the
completed arborescences. The largest discrepancy is
$4.8\times10^{-15}$.

Second, we run Algorithm~\ref{alg:stopped} $200{,}000$ times on the first dense
chain with $|B|=2$ and seed $1$. This simulation uses only the forward
transition matrix. Of the $4{,}802$ expected counts, $755$ are below
$5$ and $79$ below $1$, and the smallest is $0.26$. We therefore calibrate
the goodness-of-fit statistics with $2{,}000$ multinomial samples of
the same size from $q_A$, rather than a chi-squared approximation.
The reported upper-tail $p$-values have Monte Carlo standard errors
at most $0.011$.

The Pearson statistic is $4{,}818.1$, with simulated $p$-value $0.41$.
The maximum standardized deviation across forests is $3.52$, with
upper-tail $p$-value $0.995$, so this maximum lies near the lower
$0.5\%$ tail of the simulated reference distribution. Neither
upper-tail test rejects $q_A$. Overall, the forward-walk simulation
agrees with the law computed from \eqref{eq:closed}.

\bibliographystyle{plainnat}
\bibliography{refs}
\end{document}